\newcommand{\vect}[2]{\ensuremath{{#1}_1, \dots, {#1}_{#2}}} 
\newcommand{\ol}[1]{\ensuremath{\overline{#1}}} 
\newcommand{\kin}[1]{\ensuremath{\mathrm{Kin}(#1)}}
\theoremstyle{plain}
\newtheorem{thm}{Theorem}[chapter]
\newtheorem{lemma}[thm]{Lemma}
\newtheorem{conj}[thm]{Conjecture}
\theoremstyle{definition}
\newtheorem{dfn}[thm]{Definition}
\numberwithin{equation}{thm}
\begin{document}
\frontmatter 

\author{Amanda Cameron}
\title{Kinser inequalities and related matroids}
\subject{Mathematics}
\abstract{ Kinser developed a hierarchy of inequalities dealing with the dimensions of certain spaces constructed from a given quantity of subspaces. These inequalities can be applied to the rank function of a matroid, a geometric object concerned with dependencies of subsets of a ground set. A matroid which is representable by a matrix with entries from some finite field must satisfy each of the Kinser inequalities. We provide results on the matroids which satisfy each inequality and the structure of the hierarchy of such matroids.}

\ack{I would like to thank Dillon Mayhew for his advice and supervision.

Thanks also go to Michael Welsh for technical support and for proof-reading this thesis.}

\mscthesisonly 
\pagenumbering{alph}
\maketitle

\pagenumbering{roman}
\tableofcontents
\listoffigures 

\mainmatter 

\chapter{Introduction}
A fundamental question in matroid theory is whether it is possible to find a characterisation of the class of representable matroids. In particular, we wish to know whether this can be achieved with a finite number of axioms, by adding additional rank axioms to the exisiting three. This was first alluded to by Whitney \cite{whitney}, in the paper which initiated the area of matroid theory, and the problem remains open today. Ingleton \cite{ingleton} introduced one new axiom which a matroid must satisfy in order to be representable. 

\begin{dfn}
Let $M=(E,r)$ be a matroid. For subsets \vect{X}{4} of $E$, the \emph{Ingleton inequality} is:
\begin{align*}
r(X_3)+r(X_4)+r(X_1\cup X_2)+ r(X_1\cup X_3\cup X_4)+r(X_2\cup X_3\cup X_4)\\
\leq r(X_1\cup X_3)+r(X_1\cup X_4)+r(X_2\cup X_3) +r(X_2\cup X_4)+r(X_3\cup X_4)
\end{align*}
\end{dfn}

This new condition, while necessary, is not sufficient to characterise representability. For instance, the direct sum of the Fano and the non-Fano matroids satisfies the Ingleton condition but is not representable, as later proven in Lemma \ref{containment}.

\begin{figure}[h]
\centering
\subfloat[Fano matroid, $F_7$ ]{
\includegraphics[scale=0.6]{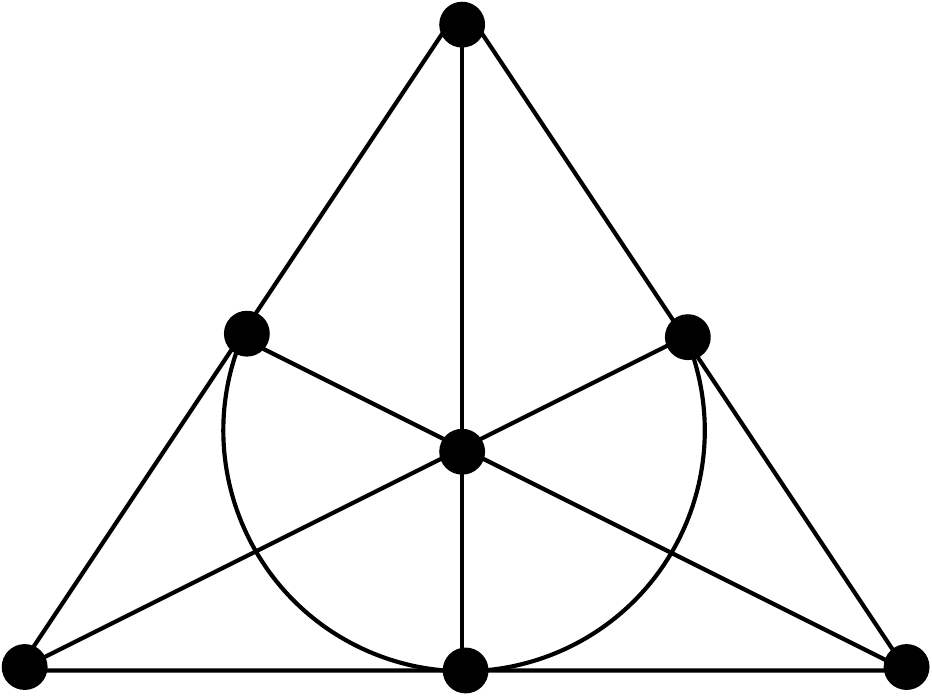}
}
\qquad
\subfloat[Non-Fano matroid, $F_7^-$]{
\includegraphics[scale=0.6]{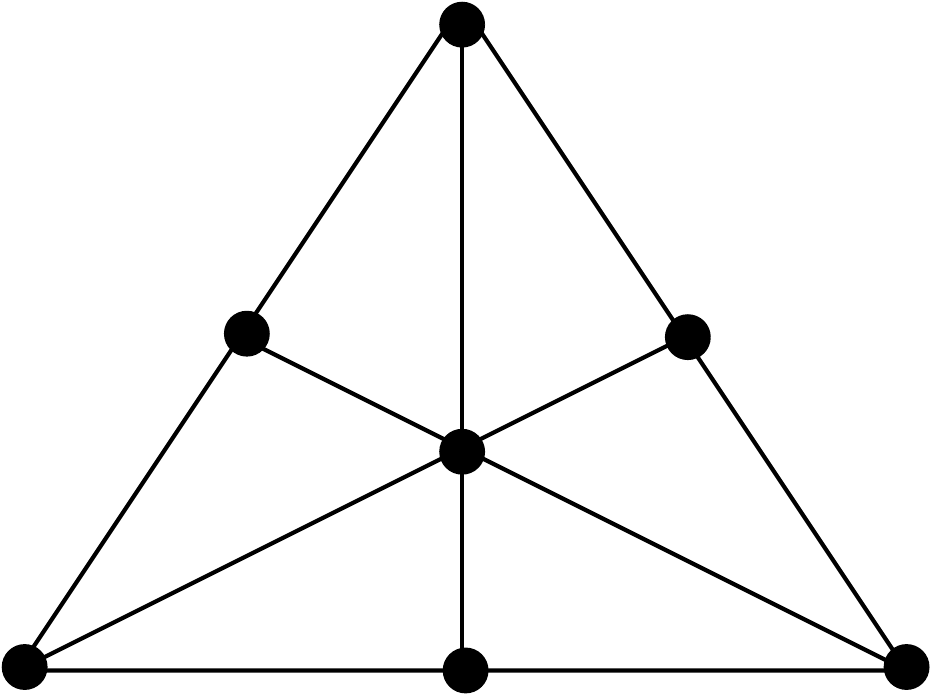}
}
\end{figure}

\pagebreak

Recently, Kinser \cite{kinser} introduced an infinite family of new representability conditions, the first of which is equivalent to the Ingleton condition. 

\begin{dfn}
Let $M$ be a matroid, and let \vect{X}{n} be any collection of subsets of $E(M)$. The \emph{$n$-th Kinser inequality}, where $n\geq 4$, is 
\begin{align*}
\sum_{i=3}^n r(X_i)+r(X_1\cup X_2)+r(X_1\cup X_3\cup X_n)+\sum_{i=4}^n r(X_2\cup X_{i-1}\cup X_i) \\
\leq r(X_1\cup X_3)+r(X_1\cup X_n)+\sum_{i=3}^n r(X_2\cup X_i)+\sum_{i=4}^n r(X_{i-1}\cup X_i)
\end{align*}
\end{dfn}

This hierarchy of inequalities is also not sufficient to guarantee representability of a matroid -- the direct sum of the Fano and the non-Fano is again a counter-example to this. Briefly putting aside the use of an infinite list of axioms, we have the following conjecture, which is due to Mayhew, Newman, and Whittle \cite{missingaxiom}. 

\begin{conj}
\label{insuff}
It is impossible to characterise the class of representable matroids with a finite number of rank axioms.
\end{conj}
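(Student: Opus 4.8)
Since this is a conjecture, what follows is a strategy rather than a proof. First reduce to a single clean claim: \emph{for every positive integer $N$ there is a non-representable matroid $M_N$ satisfying every rank inequality on at most $N$ subset-variables that is valid for all representable matroids.} This suffices, because a finite list of rank axioms uses only finitely many subset-variables, say at most $N$; each listed axiom is by hypothesis a genuine representability condition, hence one of the valid $\le N$-variable inequalities, hence holds in $M_N$. So a single family $(M_N)_{N\ge 1}$ would defeat every candidate finite axiomatisation at once, and the task becomes: produce non-representable matroids that are ``invisible'' to all bounded-variable rank inequalities.

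The first ingredient is combinatorial: show that the Kinser hierarchy is \emph{strictly} increasing, that is, for each $n\ge 4$ construct a matroid $K_n$ satisfying the $4$th through $(n-1)$st Kinser inequalities but violating the $n$th. One expects Kinser-type gadgets to supply the witnesses, and this already rules out every finite initial segment of the Kinser axioms. The strengthening that matters is to arrange $K_n$ to satisfy \emph{all} valid inequalities on at most $n-1$ subsets, not merely the Kinser ones; then the $K_n$ serve as the matroids $M_N$ and the conjecture follows. Here one would use a von Staudt-style coordinatising construction, encoding a forbidden pattern of linear dependencies so that the unique obstruction to representability has high ``Kinser complexity'', the real work being to certify that no low-variable rank inequality detects that obstruction.

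The second ingredient, which I expect to be indispensable, is a lower bound in the spirit of V\'{a}mos and of Mayhew, Newman and Whittle. Representability over $\mathbb{Q}$ (and over other natural fields) is \emph{universal}: coordinatisations turn a system of polynomial equations into a matroid that is representable exactly when the system is solvable over the relevant field, and the solvability problems one reaches are not known to be -- and conjecturally are not -- captured by any fixed bounded-complexity criterion. A finite set of rank axioms \emph{is} such a criterion, being one fixed universal condition about the rank function; a finite axiomatisation would therefore transport to a uniform criterion for those solvability problems, a contradiction. V\'{a}mos's non-axiomatisability results give a template for packaging the argument.

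The crux, common to both routes, is the passage from ``this particular family of axioms does not finitely suffice'' to ``\emph{no} finite family of rank axioms suffices''. A rank inequality reads only the rank function on unions of the substituted sets and forgets the rest of the structure, so one must show that the gadget matroids encoding hard instances are genuinely \emph{indistinguishable} from representable matroids by every bounded-variable rank inequality -- equivalently, that the facial structure of the cone of representable rank functions on $N$ subsets cannot be exploited uniformly as the gadgets grow. Pinning down this indistinguishability is exactly what keeps the conjecture open, and I would expect it to require input well beyond a refinement of the Kinser constructions.
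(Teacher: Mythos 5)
This statement appears in the paper as a \emph{conjecture} (attributed to Mayhew, Newman and Whittle) and the paper offers no proof of it, so there is nothing to compare your argument against; you have correctly recognised this and framed what you wrote as a strategy rather than a proof. As a strategy it is reasonable: the reduction to finding, for each $N$, a non-representable matroid $M_N$ satisfying every $\le N$-variable rank condition valid on representable matroids is essentially the right target, and you correctly localise the real difficulty in the step from ``defeats the Kinser axioms'' to ``defeats \emph{every} bounded-variable rank condition.'' The Kinser-gap part of your plan is genuinely carried out in the paper (Lemma \ref{gaps}: $\kin{n}^-\in\mathcal{K}_{n-1}-\mathcal{K}_n$), so your first ingredient is not speculative; the strengthening beyond Kinser is what remains open.

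Two cautions worth recording. First, your reduction silently narrows ``rank axiom'' to ``universally quantified rank \emph{inequality}.'' A finite axiomatisation in the sense of Mayhew--Newman--Whittle or of V\'amos can use more general sentences about the rank function (Boolean combinations, implications between rank equalities, and so on), so the clean claim should be phrased for arbitrary $\le N$-variable rank sentences true of all representable matroids; the same compactness-style argument then applies, but the burden on the construction of $M_N$ is correspondingly heavier. Second, the Mn\"ev/von Staudt universality heuristic speaks most directly to representability over a fixed field such as $\mathbb{Q}$ or $\mathbb{R}$, whereas the conjecture concerns representability over \emph{some} field; bridging that gap needs more than a citation of universality, and the paper itself flags that V\'amos's first-order non-axiomatisability result is related to but not equivalent to Conjecture~\ref{insuff}.
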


Note that \cite{missingaxiom} is a response to an paper of V\'{a}mos' \cite{vamos} dealing with the same question. In this paper, V\'{a}mos introduced the following geometric construction, which we call a $V$-matroid:

\begin{dfn}
\label{vmatroid}
A \emph{V-matroid} consists of a (possibly infinite) set $E$ and a collection of finite subsets $\mathcal{I}\subseteq E$ such that:
\begin{itemize}
\item[{\rm I1}.] $\varnothing\in\mathcal{I}$
\item[{\rm I2}.] If $I\in\mathcal{I}$ and $J\subseteq I$, then $J\in\mathcal{I}$
\item[{\rm I3}.] If $I,J\in\mathcal{I}$ and $|I|=|J|+1$, there exists $x\in I-J$ such that $J\cup x\in\mathcal{I}$
\end{itemize}
\end{dfn}

Instead of using rank axioms, V\'{a}mos describes $V$-matroids with an infinite list of first-order axioms. In \cite{vamos}, V\'{a}mos proved that it is not possible to characterise representable $V$-matroids by adding a further first-order axiom to this infinite list. Note that a first-order axiom is not equivalent to a rank axiom. However, as every finite $V$-matroid is a matroid, Conjecture \ref{insuff} could be regarded as a strengthening of this result.

This thesis is dedicated to investigating the classes of matroids which satisfy each of the Kinser inequalities. We will cover invariant properties of the classes, such as being minor closed and direct sum closed, and, more importantly, we will provide results on how the classes interact with each other to form an infinite hierarchy. We will touch on the complexity of verifying a matroid satisfies a given Kinser inequality, which will show that gaining certain information on the Kinser classes, such as which classes are closed under duality, could involve a great amount of computational work.

This thesis will conclude by considering a question which arises naturally in conjunction with representability, that of excluded minors. The following theorem was proven by Mayhew, Newman, and Whittle in 2008 \cite{eminors}, settling a conjecture by J. Geelen.

\begin{thm}
For any infinite field $\mathbb{K}$ and any matroid $N$ representable over $\mathbb{K}$, there is an excluded minor for $\mathbb{K}$-representability that has $N$ as a minor.
\end{thm}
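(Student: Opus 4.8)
The plan is to prove the theorem constructively: for a given infinite field $\mathbb{K}$ and a $\mathbb{K}$-representable matroid $N$, I would exhibit a single matroid $M$ that has $N$ as a minor and is itself an excluded minor for $\mathbb{K}$-representability. Aiming for $M$ itself to be the excluded minor --- rather than building some larger non-$\mathbb{K}$-representable matroid and passing to a minor-minimal non-representable minor of it --- is the clean route, but it forces us to control \emph{every} single-element deletion and contraction of $M$ at once, and that is where essentially all of the work lies.

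First I would fix a matrix $A$ over $\mathbb{K}$ representing $N$ and build an augmented matroid $M^{+}$ around it by adjoining a family of auxiliary elements --- for instance via repeated free extension and free coextension, or by attaching a highly symmetric spike- or swirl-like frame --- each placed in generic position relative to $A$ and to the previously added elements. The hypothesis that $\mathbb{K}$ is infinite enters here in an essential way: at each step genericity only requires the new coordinates to avoid finitely many proper subvarieties of the representation space, so an infinite field always has room, whereas over a small finite field the construction would stall. By construction $M^{+}$ is $\mathbb{K}$-representable. I would then impose one further, carefully chosen combinatorial modification --- say relaxing a distinguished circuit-hyperplane, or forcing one extra point onto a prescribed family of flats --- to obtain the candidate $M$. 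The modification must be designed so that, in any putative $\mathbb{K}$-representation of $M$, it interacts with the whole augmented frame: the generic auxiliary elements pin down a system of cross-ratio relations among the coordinates, and the modification demands that these relations take a value the frame itself already forbids.

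The verification then has two halves. \emph{$M$ is not $\mathbb{K}$-representable}: from the matroid axioms together with the chosen modification one derives that a $\mathbb{K}$-representation would produce scalars in $\mathbb{K}$ satisfying a pair of incompatible polynomial identities; crucially, this derivation must call on a copy of every element of $N$ together with every auxiliary element, so that the certificate of non-representability is genuinely global. \emph{Every proper minor of $M$ is $\mathbb{K}$-representable}: for each $e \in E(M)$ one exhibits explicit $\mathbb{K}$-representations of $M \setminus e$ and $M / e$. When $e$ is an auxiliary element this amounts to re-running the generic construction with one step omitted (again using that $\mathbb{K}$ is infinite); when $e \in E(N)$ one uses the extra freedom released by deleting or contracting $e$ to absorb the offending relation. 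The symmetry built into the auxiliary frame should collapse these into only a handful of genuinely distinct cases. Finally, that $N$ is a minor of $M$ is immediate: deleting and contracting the adjoined auxiliary elements in the appropriate way recovers $N$.

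The main obstacle is the simultaneous balancing of the two verification steps. The modification has to make $M$ non-$\mathbb{K}$-representable via a certificate that touches every element of $M$ --- otherwise some $M \setminus e$ or $M / e$, possibly with $e \in E(N)$, would remain non-representable, so $M$ would not be an excluded minor, and indeed no minor-minimal non-representable minor of $M$ would be guaranteed to contain $N$ --- yet the same modification must not create any \emph{other} obstruction to representability that survives into a proper minor. Producing a frame-plus-modification that threads this needle, namely non-representable, minor-minimally so, and with $N$ embedded inside, is the heart of the proof; the genericity available only over an infinite field is precisely what makes enough such configurations exist.
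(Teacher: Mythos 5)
Your plan is aligned with the approach the paper (following Mayhew, Newman, and Whittle) actually takes: embed the given representable matroid in a projective geometry over the infinite field, adjoin generically placed auxiliary elements so as to manufacture a circuit-hyperplane, relax that circuit-hyperplane to destroy representability, and then verify that the relaxation is an excluded minor and still has the original matroid as a minor. You have also correctly identified why one cannot simply pass to a minor-minimal non-representable minor of a larger non-representable matroid.

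However, what you have written is a strategy, not a proof, and the step you yourself flag as ``the heart of the proof'' --- producing a frame-plus-modification so that the single relaxation both (a) makes the whole matroid non-representable and (b) makes every one-element deletion and contraction representable --- is precisely the part the paper supplies and your proposal leaves open. The specific device the paper uses is a \emph{cyclic family of circuit-hyperplanes}: one arranges (after partitioning the original ground set into two independent hyperplanes $H_0$ and $H_{n-1}$, and freely adding points $X\cup\{x_0,\dots,x_{n-1}\}$, hyperplanes $H_1,\dots,H_{n-2}$, and a pair $e,f$ on a free line) that each $H_i\cup\{e,f\}$ is a circuit-hyperplane, that relaxing $H_0\cup\{e,f\}$ violates a Kinser inequality, but that relaxing any \emph{other} $H_i\cup\{e,f\}$ yields a $\mathbb{K}$-representable matroid again. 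Once this is in place, Oxley's Proposition 3.3.5 immediately shows that for $x\in H_j$ with $j\neq 0$ the minors $N'\backslash x$ and $N'/x$ agree with minors of one of these representable relaxations, and the remaining cases ($x\in H_0$, $x\in\{e,f\}$) are handled by free-extension arguments. Your proposal does not contain this mechanism, nor any replacement for it; a generic spike- or swirl-frame with a single relaxed circuit-hyperplane will not by itself guarantee that deleting or contracting an element of $E(N)$ yields a representable matroid, since the obstruction certificate need not ``touch'' that element. You also appeal to a cross-ratio/polynomial-identity obstruction, whereas the paper's verification of non-representability is via the Kinser (for $n=4$, Ingleton) inequality applied to the explicit family $(H_0,\{e,f\},H_1,\dots,H_{n-1})$; that is not a fatal divergence, but without an explicit configuration there is nothing to apply the cross-ratio argument to. In short: the approach is right, but the missing construction is the theorem.
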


We will provide a strengthening of this result, showing that there is in fact an infinite number of such excluded minors. Specifically, we will show that for each layer of the Kinser class hierarchy, we can find an excluded minor which is contained inside that layer.

\chapter{Fundamentals}

To begin with, we will cover the basic concepts in matroid theory which will be used throughout this thesis. All of the following concepts and results can be found in \cite{Oxley}.

\begin{dfn}
\label{matroid}
A \emph{matroid} $M=(E,\mathcal{I})$ consists of a finite ground set $E$ and a collection of subsets $\mathcal{I}\subseteq E$ such that:
\begin{itemize}
\item[{\rm I1}.] $\varnothing\in\mathcal{I}$
\item[{\rm I2}.] If $I\in\mathcal{I}$ and $J\subseteq I$, then $J\in\mathcal{I}$
\item[{\rm I3}.] If $I,J\in\mathcal{I}$ and $|I|<|J|$, there exists $x\in J-I$ such that $I\cup x\in\mathcal{I}$
\end{itemize}
\end{dfn}

Any subset of $E$ contained in $\mathcal{I}$ is referred to as an \emph{independent set}, while any subset of $E$ which is not contained in $\mathcal{I}$ is called \emph{dependent}. A dependent set of cardinality one is called a \emph{loop}. We may use $E(M)$ in the place of $E$ at times, in order to make it clear which matroid is being referred to.

\begin{dfn}
\label{rank}
Take a matroid $M$ with ground set $E$. The \emph{rank} of a subset $X$ of $E$, denoted by $r(X)$, is the cardinality of the largest independent subset of $X$.
\end{dfn}

\begin{lemma}
\label{rankaxioms}
A matroid $M$ can be described by the ground set $E$ and a \emph{rank function} $r:\mathcal{P}(E)\rightarrow \mathbb{Z}\cup\{0\}$ such that, for $X,Y\in\mathcal{P}(E)$, the following conditions hold:
\begin{itemize}
\item[{\rm R1}.] $r(X)\leq |X|$
\item[{\rm R2}.] If $Y\subseteq X$, $r(Y)\leq r(X)$
\item[{\rm R3}.] $r(X\cup Y)+r(X\cap Y)\leq r(X)+r(Y)$
\end{itemize}
\end{lemma}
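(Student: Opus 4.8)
The plan is to establish this as a cryptomorphism: first, that the rank function of any matroid (Definition~\ref{rank}) satisfies R1--R3, and conversely, that any function $r:\mathcal{P}(E)\to\mathbb{Z}\cup\{0\}$ obeying R1--R3 arises in this way, meaning that the family $\mathcal{I}=\{X\subseteq E : r(X)=|X|\}$ satisfies I1--I3 and has $r$ as its rank function.

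For the forward direction, R1 is immediate from Definition~\ref{rank}, since $r(X)$ is the size of a subset of $X$, and R2 holds because a largest independent subset of $Y$ is also an independent subset of $X$ whenever $Y\subseteq X$. The substantive point is R3. The approach I would take is to pick a maximal independent subset $B$ of $X\cap Y$ and then apply the augmentation axiom I3 repeatedly to enlarge $B$ to a maximal independent subset $B'$ of $X\cup Y$. Two auxiliary facts are needed: that every maximal independent subset of a set $Z$ has cardinality $r(Z)$ (a short argument from I3 and hereditariness), and that in enlarging $B$ to $B'$ no new element of $X\cap Y$ is ever added, since otherwise $B$ would not have been maximal in $X\cap Y$ (using I2). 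Granting these, $B'\cap X$ and $B'\cap Y$ are independent subsets of $X$ and $Y$ respectively, their union is $B'$, and their intersection is exactly $B$; inclusion--exclusion on cardinalities then yields $r(X)+r(Y)\geq |B'\cap X|+|B'\cap Y| = |B'|+|B| = r(X\cup Y)+r(X\cap Y)$.

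For the converse, take $\mathcal{I}$ as defined above. Then I1 is the statement $r(\varnothing)=0$, which follows from R1 together with nonnegativity of $r$. Axiom I2 follows from the estimate $r(A\cup x)\leq r(A)+1$, itself a one-line consequence of R3 applied to $A$ and $\{x\}$, applied repeatedly as one passes from $I$ down to a subset $J$. For I3, I would argue by contradiction: if $I,J\in\mathcal{I}$ with $|I|<|J|$ but $I\cup x\notin\mathcal{I}$ for every $x\in J-I$, then R1 and R2 force $r(I\cup x)=r(I)$ for each such $x$, and an induction on $|J-I|$ using R3 upgrades this to $r(I\cup J)=r(I)=|I|$, contradicting $r(I\cup J)\geq r(J)=|J|>|I|$. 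Finally, the same ``the rank cannot be raised'' induction shows that a subset $Y\subseteq X$ chosen to be maximal with $r(Y)=|Y|$ must in fact satisfy $r(Y)=r(X)$; combined with monotonicity this identifies the rank function of $(E,\mathcal{I})$ with $r$, completing the cryptomorphism.

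I expect the main obstacle to be the bookkeeping in the proof of R3: pinning down precisely why augmenting a basis of $X\cap Y$ to a basis of $X\cup Y$ never returns to $X\cap Y$, and why ``maximal'' and ``maximum cardinality'' coincide for independent subsets of a fixed set. Once those points are nailed down, the remaining steps are routine applications of R1--R3 or of the three independence axioms.
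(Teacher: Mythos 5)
Your proof is correct and is essentially the standard textbook cryptomorphism argument (the one found in Oxley, which the paper cites in lieu of giving its own proof — the thesis states this lemma as background and does not prove it). Both directions, the augmentation argument for R3 via a maximal independent subset of $X\cap Y$ extended inside $X\cup Y$, and the contradiction argument for I3 via the unit-increase bound $r(A\cup x)\leq r(A)+1$ iterated with R3, are sound; the auxiliary facts you flag (maximal $=$ maximum cardinality; the augmentation never re-enters $X\cap Y$) are exactly the right things to pin down and follow as you indicate. The one point worth noting is that your derivation of I1 implicitly assumes $r\geq 0$, which the paper's stated codomain $\mathbb{Z}\cup\{0\}$ (presumably a typo for the nonnegative integers) does not literally guarantee from R1--R3 alone; you treat it as an ambient hypothesis, which is the intended reading.
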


A set $X$ is independent if and only if $r(X)=|X|$. If $r(X)=r(M)$ we call $X$ a \emph{basis} of $M$. If a set contains a basis, it is called \emph{spanning}.

\section{Dependencies}

\begin{dfn}
\label{closure} The \emph{closure} of a set $X$ is denoted by $cl(X)$, where \\ $cl(X)=X\cup \{e\in E-X \ | \ r(X\cup e)=r(X)\}$
\end{dfn}

\begin{lemma}
\label{closureaxioms}
The closure function of a matroid satisfies the following conditions:
\begin{itemize}
\item[{\rm CL1}.] If $X\subseteq E$, then $X\subseteq cl(X)$.
\item[{\rm CL2}.] If $X\subseteq Y$, then $cl(X)\subseteq cl(Y)$.
\item[{\rm CL3}.] If $X\subseteq E$, then $cl(cl(X))=cl(X)$.
\item[{\rm CL4}.] If $X\subseteq E$ and $x\in E$, and $y\in cl(X\cup x)-cl(X)$, then $x\in cl(X\cup y)$.
\end{itemize}
\end{lemma}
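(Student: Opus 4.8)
The plan is to verify the four conditions one at a time, each by a short computation with the rank axioms R1--R3 of Lemma~\ref{rankaxioms} applied to the description of $cl$ in Definition~\ref{closure}. Condition CL1 is immediate, since $cl(X)$ is defined to contain $X$. For CL2, suppose $X\subseteq Y$ and take $e\in cl(X)$; if $e\in Y$ there is nothing to prove, so assume $e\notin Y$, whence $e\notin X$ and $r(X\cup e)=r(X)$. Because $X\subseteq Y$ and $e\notin Y$ we have $(X\cup e)\cap Y=X$ and $(X\cup e)\cup Y=Y\cup e$, so submodularity (R3) gives $r(Y\cup e)+r(X)\le r(Y)+r(X\cup e)=r(Y)+r(X)$; cancelling the finite quantity $r(X)$ and using monotonicity (R2) gives $r(Y\cup e)=r(Y)$, that is, $e\in cl(Y)$.

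For CL3 the key intermediate fact I would establish is that $r(cl(X))=r(X)$. Ordering the elements of $cl(X)\setminus X$ as $e_1,\dots,e_k$, a short induction on $j$ shows $r(X\cup e_1\cup\dots\cup e_j)=r(X)$: the inductive step applies R3 to $X\cup e_1\cup\dots\cup e_{j-1}$ and $X\cup e_j$, whose intersection contains $X$ and each of which has rank $r(X)$, forcing the union to have rank at most $r(X)$, and R2 gives equality. Granting $r(cl(X))=r(X)$, if $e\in cl(cl(X))$ then $r(X\cup e)\le r(cl(X)\cup e)=r(cl(X))=r(X)$, so $r(X\cup e)=r(X)$ and $e\in cl(X)$; together with CL1 this yields $cl(cl(X))=cl(X)$.

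Condition CL4 (the MacLane--Steinitz exchange property) is where the argument carries the most content. Given $y\in cl(X\cup x)-cl(X)$, I would first record that adjoining a single element raises rank by at most one (from R1 and R2), so that $y\notin cl(X)$ forces $r(X\cup y)=r(X)+1$. Next I would rule out $x\in cl(X)$: in that case $r(X\cup x)=r(X)$ and $r(X\cup x\cup y)=r(X\cup x)=r(X)$ since $y\in cl(X\cup x)$, whence $r(X\cup y)\le r(X)$, contradicting $r(X\cup y)=r(X)+1$. Hence $x\notin cl(X)$ and $r(X\cup x)=r(X)+1$. Then $r(X\cup y\cup x)=r(X\cup x)=r(X)+1=r(X\cup y)$, again using $y\in cl(X\cup x)$, and therefore $x\in cl(X\cup y)$, as required.

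I do not expect any serious obstacle; the statement is standard. The only places where care is needed are the induction in CL3 --- where one must keep every partial union at rank exactly $r(X)$ --- and the ordering of steps in CL4, where the case $x\in cl(X)$ has to be dispatched before the rank equalities can be read off cleanly.
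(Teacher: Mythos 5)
Your proof is correct. The paper itself does not prove this lemma; it is stated as a standard fact citing Oxley at the start of the chapter, so there is no argument in the paper to compare against. Your verification of CL1--CL3 is exactly the standard one, and in particular your intermediate claim that $r(cl(X))=r(X)$, proved by iterated submodularity, is the right tool for CL3. Your CL4 argument is also correct: dispatching the case $x\in cl(X)$ first is exactly the care needed, after which the rank equalities $r(X\cup x\cup y)=r(X\cup x)=r(X)+1=r(X\cup y)$ immediately give $x\in cl(X\cup y)$.

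One small slip worth flagging: you attribute the fact that adjoining one element raises rank by at most one to ``R1 and R2,'' but R1 and R2 alone do not yield it. The clean derivation is from R3 together with R1: for $y\notin X$, $r(X\cup y)+r(X\cap\{y\}) \le r(X)+r(\{y\})$, so $r(X\cup y)\le r(X)+r(\{y\})\le r(X)+1$ since $r(\emptyset)=0$ and $r(\{y\})\le 1$ by R1. With that correction, the proof is complete and self-contained from the rank axioms and Definition~\ref{closure}.
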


The closure function corresponds to the notion of span of a vector space, and is sometimes referred to as such. A \emph{flat} is a set whose closure is equal to the set itself, i.e. $cl(X)=X$. If a flat has rank $r(M)-1$, it is called a \emph{hyperplane}. 

A minimally dependent set, i.e. a dependent set whose every proper subset is independent, is called a \emph{circuit}. A matroid can be described entirely by its set of circuits $\mathcal{C}$.

\begin{lemma}
\label{circuitaxioms} $(E,\mathcal{C})$ describes a matroid when the following conditions hold.
\begin{itemize}
\item[{\rm C1}.] $\varnothing\notin\mathcal{C}$
\item[{\rm C2}.] If $C,D\in\mathcal{C}$ and $C\subseteq D$, then $C=D$
\item[{\rm C3}.] If $C,D$ are distinct elements of $\mathcal{C}$ amd $e\in C\cup D$, then $(C\cup D)-e$ contains a circuit
\end{itemize}
\end{lemma}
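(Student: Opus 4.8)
The plan is to recover the independent sets from $\mathcal{C}$, verify the three axioms of Definition~\ref{matroid} for the resulting collection, and then confirm that the circuits of the matroid so obtained are exactly the members of $\mathcal{C}$. Let $\mathcal{I}$ be the collection of all subsets of $E$ that contain no member of $\mathcal{C}$. Axiom I1 is immediate from C1, since $\varnothing$ has no nonempty subset; and I2 is immediate, since if $J\subseteq I$ and $J$ contained some $C\in\mathcal{C}$ then $C\subseteq I$, contradicting $I\in\mathcal{I}$. All of the work therefore goes into the augmentation axiom I3.

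For I3 I would argue by contradiction, via an extremal counterexample. Suppose $I_1,I_2\in\mathcal{I}$ satisfy $|I_1|<|I_2|$ but $I_1\cup e\notin\mathcal{I}$ for every $e\in I_2-I_1$, and among all such pairs choose one minimising $|I_1\cup I_2|$. If $I_1\subseteq I_2$ then any $e\in I_2-I_1$ has $I_1\cup e\subseteq I_2$, so $I_1\cup e\in\mathcal{I}$ by I2, a contradiction; hence $I_1-I_2\neq\varnothing$. For each $f\in I_2-I_1$ the dependent set $I_1\cup f$ contains a circuit $C_f$, and since $I_1\in\mathcal{I}$ we get $f\in C_f\subseteq I_1\cup f$; furthermore $C_f$ must meet $I_1-I_2$, as otherwise $C_f\subseteq I_2$, contradicting $I_2\in\mathcal{I}$. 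Fixing such an $f$ together with some $e\in C_f\cap(I_1-I_2)$, set $I_1'=(I_1-e)\cup f$. The heart of the argument is then: (i) $I_1'\in\mathcal{I}$, because any circuit inside $I_1'$ would have to use $f$, so eliminating $f$ between it and $C_f$ using C3 would leave a circuit inside $I_1$, impossible; and (ii) $|I_1'|=|I_1|<|I_2|$ while $I_1'\cup I_2=(I_1\cup I_2)-e$ is properly contained in $I_1\cup I_2$. By minimality $(I_1',I_2)$ is not a counterexample, so there is $x\in I_2-I_1'$ (hence $x\in I_2-I_1$) with $I_1'\cup x\in\mathcal{I}$. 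But $I_1\cup x\notin\mathcal{I}$ supplies a circuit $C_x$ with $x\in C_x\subseteq I_1\cup x$; since $I_1'\cup x$ is independent we must have $e\in C_x$; and eliminating $e$ between $C_f$ and $C_x$ using C3 leaves a circuit inside $I_1'\cup x$, the final contradiction.

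Once I3 is established, $(E,\mathcal{I})$ is a matroid, and it remains only to identify its circuits. A subset of $E$ is dependent exactly when it contains a member of $\mathcal{C}$, so the minimal dependent sets are the minimal elements, under inclusion, among the sets that contain some member of $\mathcal{C}$; by C2 no member of $\mathcal{C}$ properly contains another, so these minimal elements are precisely the members of $\mathcal{C}$. Hence $\mathcal{C}$ is the circuit family of the matroid it determines, as claimed. I expect the extremal argument for I3 -- choosing the correct quantity to minimise and arranging the two applications of the elimination axiom C3 so that the resulting circuits land inside sets already known to be independent -- to be the only real obstacle; the rest is routine.
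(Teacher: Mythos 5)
The paper states this lemma without proof, citing Oxley's text for it; your argument is correct and follows the standard approach used there --- defining $\mathcal{I}$ as the sets containing no member of $\mathcal{C}$, verifying I1 and I2 directly, proving I3 by an extremal exchange argument with two applications of circuit elimination (C3), and then identifying the circuits via C2. The one cosmetic difference is the extremal quantity you minimise ($|I_1\cup I_2|$ rather than, say, $|I_1-I_2|$), but the exchange $(I_1-e)\cup f$ and the two C3 eliminations are structured correctly and the proof goes through.
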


A \emph{circuit-hyperplane} is a set which is both a circuit and a hyperplane.

\begin{dfn}
\label{relaxation}
Let $M$ be a matroid and let $H$ be a circuit-hyperplane of $M$. $H$ has rank equal to $r(M)-1$. We say that we \emph{relax} $H$ when we make it independent, i.e. $r(H)=r(M)$. When we reverse this operation, we say that we \emph{tighten} $H$.
\end{dfn}

\section{Representability}

\begin{dfn}
If $V$ is a set of vectors in a vector space, and for every subset $X$ of $V$, we define $r(X)$ to be the linear rank of $X$, then $(V,r)$ is a matroid, which we say is \emph{representable}. 
\end{dfn}
If these vectors come from a finite field $\mathbb{K}$, we say that $M$ is \emph{$\mathbb{K}$-representable.} 
\section{Minors}

\begin{dfn}
\label{delete}We can remove an element $e$ of a matroid $M=(E,r)$ by \emph{deleting} it. This yields a matroid $M\backslash e=(E-e,r_{M\backslash e})$, where $r_{M\backslash e}(X)=r_M(X)$ for all $X\subseteq E-\{e\}$.
\end{dfn}

\begin{dfn}
\label{contract}
We can also remove an element $e$ of a matroid $M=(E,r)$ by \emph{contracting} it. This gives a matroid $M/e=(E-e,r_{M/e})$ where $r_{M/e}(X)=r_M(X\cup \{e\})-r(\{e\})$ for all $X\subseteq E-\{e\}$.
\end{dfn}

Any matroid producted by a sequence of deletions and contractions is called a \emph{minor} of $M$.

We say that a class of matroids $\mathcal{M}$ is \emph{minor closed} if, for every matroid $M$ in $\mathcal{M}$, each of its minors is also in $\mathcal{M}$.
A matroid $M$ is an \emph{excluded minor} for a minor closed class of matroids $\mathcal{M}$ if $M\notin\mathcal{M}$ but deleting or contracting any element from $M$ produces a matroid in $\mathcal{M}$. A matroid $M$ is contained in $\mathcal{M}$ if and only if $M$ does not contain an excluded minor for $\mathcal{M}$.

\section{Duality}

\begin{dfn}
\label{dual}
From $M$ we can construct the \emph{dual matroid} $M^*$. This has ground set equal to the ground set $E$ of $M$, and the rank of any subset is found using the function $r^*(X)=|X|+r(E^*-X)-r(M)$.
\end{dfn}

A basis of $M^*$ is is called a \emph{cobasis} of $M$. Note that if $B$ is a basis of $M$, then $E-B$ is a cobasis of $M$. Similarly, the rank function, circuits and independent sets of $M^*$ are called the \emph{corank} function, \emph{cocircuits} and \emph{coindependent} sets of $M$. 

\begin{lemma}
\label{relax}\emph{(\cite[Proposition 2.1.7]{Oxley})}
Let $M$ be a matroid. Relax a circuit-hyperplane $H$ of $M$ to yield the matroid $M'$. Then $(M')^*$ is identical to the matroid yielded from $M^*$ by relaxing the circuit-hyperplane $E-H$ of $M^*$.
\end{lemma}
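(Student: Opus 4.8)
The plan is to verify directly that the relaxation operation and the duality operation interact as claimed, by checking that the two constructions produce matroids with the same collection of bases. Recall that if $H$ is a circuit-hyperplane of $M$, then relaxing $H$ produces $M'$ whose bases are exactly the bases of $M$ together with the one new basis $H$ itself; this is the standard description of relaxation (it follows from Definition \ref{relaxation} together with the fact that $H$ has rank $r(M)-1$ and size $r(M)$, so adding $H$ to the basis family still satisfies the basis exchange axiom). Dually, the bases of $M^*$ are the complements $E-B$ of bases $B$ of $M$.

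First I would record the key observation that $E-H$ is a circuit-hyperplane of $M^*$: since $H$ is a hyperplane of $M$, its complement $E-H$ is a cocircuit of $M$, i.e.\ a circuit of $M^*$; and since $H$ is a circuit of $M$, its complement $E-H$ is a cohyperplane of $M$, i.e.\ a hyperplane of $M^*$. (One should double-check the rank bookkeeping here — $|E-H| = |E| - r(M)$, and one verifies $r^*(E-H) = r(M^*) - 1$ using Definition \ref{dual} — but this is routine.) Thus relaxing $E-H$ in $M^*$ is a legitimate operation, and it produces a matroid, call it $N$, whose bases are the bases of $M^*$ together with the single new basis $E-H$.

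Next I would compare $N$ with $(M')^*$ by listing their bases. The bases of $(M')^*$ are the complements $E - B'$ where $B'$ ranges over bases of $M'$. Since the bases of $M'$ are the bases of $M$ together with $H$, the bases of $(M')^*$ are the complements of bases of $M$ — which are exactly the bases of $M^*$ — together with the one extra set $E - H$. This is precisely the basis family of $N$ described above. Since a matroid is determined by its ground set and its collection of bases, $(M')^* = N$, which is the claim.

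The only real obstacle is the careful verification of the ranks and the circuit/hyperplane status of $E-H$ in $M^*$ — i.e.\ confirming that $E-H$ genuinely is a circuit-hyperplane of $M^*$ so that the relaxation on the right-hand side is even defined — since once that is in place, the argument is just a matching of two explicitly listed basis families. An alternative, essentially equivalent, route would be to argue entirely through the rank function $r^*$ of $M^*$ and show the two rank functions agree on every subset, but the basis-family comparison is cleaner and avoids a proliferation of cases depending on whether a given set does or does not contain $E-H$.
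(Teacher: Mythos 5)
The paper does not prove this lemma at all: it is stated as a cited background result, \cite[Proposition 2.1.7]{Oxley}, in the Fundamentals chapter, so there is no in-paper proof to compare against. Your basis-comparison argument is correct and is essentially the standard proof of this fact (and the one Oxley gives): relaxation of the circuit-hyperplane $H$ adjoins the single new basis $H$ (using $|H| = r(H)+1 = r(M)$ since $H$ is both a circuit and a hyperplane), $E-H$ is a circuit-hyperplane of $M^*$ by the usual duality between circuits/hyperplanes of $M$ and hyperplanes/circuits of $M^*$, and complementing the basis family $\mathcal{B}(M)\cup\{H\}$ gives $\mathcal{B}(M^*)\cup\{E-H\}$, which is exactly the basis family obtained by relaxing $E-H$ in $M^*$. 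The rank bookkeeping you flag as needing a double-check does go through, and no gap remains once the relaxation-via-bases description is taken as known.
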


\begin{lemma}
\label{relax2}\emph{(\cite[Proposition 3.3.5]{Oxley})}
Let $H$ be a cicuit-hyperplane of a matroid $M$, and let $M'$ be the matriod obtained from $M$ by relaxing $H$.
\begin{itemize}
\item[{\rm i}.] When $e\in E(M)-H$, $M/e=M'/e$, and, unless $e$ is a coloop of $M$, $M'\backslash e$ is obtained from $M\backslash e$ by relaxing the circuit-hyperplane $H$ of $M\backslash e$.
\item[{\rm ii}.] Dually, when $f\in H$, $M\backslash f=M'\backslash f$ and, unless $f$ is a loop of $M$, $M'/f$ is obtained from $M/f$ by relaxing the circuit-hyperplane $X-f$ of $M/f$.
\end{itemize}
\end{lemma}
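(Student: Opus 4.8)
The plan is to prove part~(i) directly from the rank-function description of relaxation in Definition~\ref{relaxation}, and then to obtain part~(ii) by dualising, using Lemma~\ref{relax} to see what relaxation does under duality. (The ``$X-f$'' in the statement should read ``$H-f$''.) The one fact everything rests on is the following description of $M'$: since $H$ is a circuit, every proper subset of $H$ is already independent in $M$, so relaxing $H$ adds exactly one new independent set, namely $H$ itself, which --- as $|H| = r(M)$, $H$ being a circuit-hyperplane --- is a basis of $M'$; consequently $r_{M'}(Y) = r_M(Y)$ for every $Y \subseteq E$ with $Y \ne H$, while $r_{M'}(H) = r_M(H) + 1 = r(M)$.

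For part~(i), take $e \in E(M) - H$. Because $e \notin H$, no set of the form $X \cup e$ (for $X \subseteq E - e$) equals $H$, and $\{e\} \ne H$; so the above description gives $r_{M'}(X \cup e) = r_M(X \cup e)$ and $r_{M'}(\{e\}) = r_M(\{e\})$. Substituting into the contraction formula (Definition~\ref{contract}) yields $r_{M/e} = r_{M'/e}$, hence $M/e = M'/e$. For the deletion claim, assume also that $e$ is not a coloop, so $r(M\backslash e) = r_M(E-e) = r(M)$. One then checks that $H$ is a circuit-hyperplane of $M\backslash e$: it is still a circuit (a circuit of $M$ avoiding $e$), it has rank $r(M) - 1 = r(M\backslash e) - 1$, and it remains a flat of $M\backslash e$ since it is a flat of $M$. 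Relaxing this circuit-hyperplane of $M\backslash e$ and comparing with $M'\backslash e$ (whose rank function is the restriction of $r_{M'}$ to subsets of $E - e$) shows that both matroids agree with $r_{M\backslash e} = r_M|_{\mathcal{P}(E-e)}$ everywhere except at $H$, where each equals $r(M)$; so $M'\backslash e$ is $M\backslash e$ with $H$ relaxed.

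For part~(ii), take $f \in H$. Then $f \notin E - H$, and $E - H$ is a circuit-hyperplane of $M^*$ (Lemma~\ref{relax}), so part~(i) applied to $M^*$, the circuit-hyperplane $E - H$, and the element $f$ gives $M^*/f = (M^*)'/f$ and, provided $f$ is not a coloop of $M^*$ --- equivalently, not a loop of $M$ --- that $(M^*)'\backslash f$ is $M^*\backslash f$ with the circuit-hyperplane $E - H$ relaxed. Now dualise using $(N\backslash g)^* = N^*/g$, $(N/g)^* = N^*\backslash g$, and Lemma~\ref{relax} to identify $(M^*)'$ with $(M')^*$: the first identity becomes $M\backslash f = M'\backslash f$, and the second, applying Lemma~\ref{relax} once more to turn ``relax $E - H$ in $M^*\backslash f$'' into ``relax $(E-f)-(E-H) = H - f$ in its dual $M/f$'', becomes the statement that $M'/f$ is $M/f$ with the circuit-hyperplane $H - f$ relaxed.

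The main obstacle I expect is the careful handling of the coloop (and, dually, loop) exceptions in part~(i): one must confirm that deleting $e$ does not lower the rank of the matroid --- which is exactly what fails for a coloop --- so that $H$ still has the rank of a hyperplane, and one must separately check that $H$ stays a flat after deletion rather than assuming it. Once part~(i) is set up with these points nailed down, part~(ii) is a routine, if somewhat fiddly, transfer through duality; the only real danger there is swapping a deletion for a contraction or losing track of which ground set each of $E - H$ and $H - f$ lives in.
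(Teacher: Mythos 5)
The paper does not prove this lemma: it is stated as a cited result from Oxley (Proposition~3.3.5), so there is no in-paper proof against which to compare your argument. On its own merits, your proof is correct. The description of how relaxation alters the rank function --- $r_{M'}$ agrees with $r_M$ except at $H$ itself, where the rank jumps to $r(M)$ --- is exactly right and is the right tool; from it, part~(i) follows by substituting into the contraction and deletion formulas, and you correctly identify and handle the two things that need explicit checking in the deletion case, namely that $H$ remains a circuit-hyperplane of $M\backslash e$ when $e$ is not a coloop (rank of the deletion unchanged) and that $H$ remains a flat (closure restricted to $E-e$ still equals $H$). Your derivation of part~(ii) by dualising part~(i) through Lemma~\ref{relax}, with the translation $(E-f)-(E-H)=H-f$, is a clean and standard way to avoid redoing the computation, and your parenthetical observation that the statement's ``$X-f$'' should read ``$H-f$'' correctly flags a typo in the paper. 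The only thing you leave implicit is that $H-f$ is in fact a circuit-hyperplane of $M/f$; this does follow from the dual picture (since $E-H$ is a circuit-hyperplane of $M^*\backslash f=(M/f)^*$), but a one-line remark to that effect would tighten the writeup.
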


\section{Transversals}

\begin{dfn}
\label{transversal}
Let $S$ be any set. Take a family of subsets $\mathcal{A}=(A_1,\ldots,A_k)$ of $S$. A \emph{transversal} or \emph{system of distinct representatives} of $\mathcal{A}$ is a subset $\{s_1,\ldots,s_m\}$ of $S$ such that $s_i\in A_i$ for all $i\in\{1,\ldots,m\}$ and $s_1,\ldots,s_m$ are distinct. 
\end{dfn}

\begin{dfn}
\label{partial}
Let $S$ be any set. $X\subseteq S$ is a \emph{partial transversal} of a family of subsets $\mathcal{A}=(A_1,\ldots,A_j)$ if $X$ is a transversal of $(A_1,\ldots,A_k)$ for some $A_1,\ldots,A_k\subseteq S$.
\end{dfn}

\begin{lemma}
\label{transversals}
Let $\mathcal{A}=(A_1,\ldots,A_m)$ be a family of subsets of a set $S$. When $\mathcal{A}$ is a partition of $S$, the collection of partial transversals of $\mathcal{A}$ is the collection of independent sets of a matroid on $S$. This matroid is denoted by $M[\mathcal{A}]$.
\end{lemma}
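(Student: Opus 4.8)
The plan is to identify the partial transversals of a partition concretely and then verify the three independence axioms I1--I3 from Definition \ref{matroid} directly. First I would observe that when $\mathcal{A}=(A_1,\ldots,A_m)$ is a partition of $S$, a subset $X\subseteq S$ is a partial transversal of $\mathcal{A}$ if and only if $|X\cap A_i|\leq 1$ for every $i\in\{1,\ldots,m\}$. The forward direction is immediate: if $X=\{s_1,\ldots,s_k\}$ is a transversal of some subfamily $(A_{i_1},\ldots,A_{i_k})$ with $s_j\in A_{i_j}$ and the $s_j$ distinct, then since the blocks are pairwise disjoint each block can contribute at most one $s_j$. Conversely, if $X$ meets each block in at most one element, enumerate the blocks $A_{i_1},\ldots,A_{i_k}$ that $X$ does meet and pick the unique element of $X$ in each; this exhibits $X$ as a transversal of $(A_{i_1},\ldots,A_{i_k})$, hence as a partial transversal of $\mathcal{A}$. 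Write $\mathcal{I}$ for this collection.

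Next I would check the axioms for $(S,\mathcal{I})$. Axiom I1 is trivial since $\varnothing$ meets no block. Axiom I2 is also immediate: if $X\in\mathcal{I}$ and $Y\subseteq X$ then $|Y\cap A_i|\leq|X\cap A_i|\leq 1$ for each $i$, so $Y\in\mathcal{I}$. The only step with any real content is I3. Suppose $X,Y\in\mathcal{I}$ with $|X|<|Y|$. Because $X$ and $Y$ each meet every block in at most one element and $\mathcal{A}$ covers $S$, the number of blocks met by $X$ is exactly $|X|$ and the number met by $Y$ is exactly $|Y|$. Since $|Y|>|X|$, there must be a block $A_i$ that $Y$ meets but $X$ does not; let $y$ be the element of $Y$ lying in $A_i$. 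Then $y\notin X$, and $X\cup\{y\}$ still meets $A_i$ exactly once and every other block exactly as $X$ did, so $X\cup\{y\}\in\mathcal{I}$ with $y\in Y-X$, which is precisely what I3 demands.

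There is no serious obstacle here: the lemma is essentially the assertion that the partition matroid on $S$ induced by $\mathcal{A}$ exists, and the pigeonhole count above is the entire substance of I3. I would remark only that this ease is special to partitions. For an arbitrary family of subsets the analogous statement (the transversal matroid theorem) genuinely requires more, since the exchange axiom can no longer be reduced to counting which blocks are hit but needs a Hall-type or augmenting-path argument. As Lemma \ref{transversals} concerns partitions only, none of that machinery is needed, and the matroid $M[\mathcal{A}]$ is well defined.
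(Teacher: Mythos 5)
Your proof is correct and complete. Note that the paper itself does not prove this lemma; it is stated as background in the Fundamentals chapter, which opens by saying all such results can be found in Oxley, so there is no in-paper argument to compare against. Your route --- characterising the partial transversals of a partition as precisely the sets meeting each block in at most one element, and then verifying I1--I3 directly, with I3 reduced to the pigeonhole observation that a member of $\mathcal{I}$ meets exactly as many blocks as it has elements --- is the standard partition-matroid argument and is sound. The forward direction of your characterisation does implicitly use that the subfamily $(A_{i_1},\ldots,A_{i_k})$ has distinct indices (otherwise two distinct $s_j$ could lie in a repeated block), which is the usual reading of ``subfamily'' and is consistent with Definition \ref{partial}; it would cost nothing to say so explicitly. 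Your closing remark correctly locates where the extra work lies in the general transversal matroid theorem (a Hall/augmenting-path argument for I3), and correctly notes it is not needed here.
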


If a matroid $M$ is isomorphic to $M[\mathcal{A}]$ for some family of subsets $\mathcal{A}$, we say that M is a \emph{transversal matroid} and that $\mathcal{A}$ is a \emph{presentation} of $M$. Every transversal matroid is representable over all sufficiently large fields, as proven in \cite[Proposition 11.2.16]{Oxley}.

A transversal matroid can be represented by a bipartite graph. Let \\$\mathcal{A}=(A_1,\ldots,A_m)$ be a family of subsets of $S$, and let $J=\{1,\ldots,m\}$. Construct the graph $G[\mathcal{A}]$ which has vertex set $S\cup J$ and edge set $\{xj \ | \ x\in S, j\in J, x\in A_j\}$. Recall that a matching of a graph is a collection of edges such that no two share a common endpoint. A subset $X$ is a partial transversal of $\mathcal{A}$ if and only if there is a matching in $G[\mathcal{A}]$ in which every edge has an endpoint in $X$, i.e. $X$ is matched into $J$.

\begin{dfn}
\label{truncation}
Take a matroid $M=(E,r)$ with independent sets $\mathcal{I}$. Let $J=\{I\in\mathcal{I} \ | \ |I|=r(M)\}$. The \emph{truncation} of $M$ is a matroid $T(M)=(E,r)$ with independent sets $\mathcal{I}-J$.
\end{dfn}

\chapter{Kinser Inequalities}

We will now introduce the Kinser inequalities, developed by Kinser in 2009 in \cite{kinser}. An example of a matroid which exemplifies inequality $n$ for all $n\geq 4$ will be described, as the V\'{a}mos matroid exemplifies the Ingleton inequality. If a single circuit-hyperplane of this matroid is relaxed, it no longer satisfies the inequality. These matroids will be used in further results in this thesis. We will show that the class of matroids which satisfy Kinser inequality $n$ for all $n\geq 4$ is minor-closed. These classes are also closed under direct sums.

\section{Inequalities}

\begin{dfn}
\label{inequality}
Let $M$ be a matroid, and let \vect{X}{n} be any collection of subsets of $E(M)$. The \emph{$n$-th Kinser inequality}, where $n\geq 4$, is 
\begin{align*}
\sum_{i=3}^n r(X_i)+r(X_1\cup X_2)+r(X_1\cup X_3\cup X_n)+\sum_{i=4}^n r(X_2\cup X_{i-1}\cup X_i) \\
\leq r(X_1\cup X_3)+r(X_1\cup X_n)+\sum_{i=3}^n r(X_2\cup X_i)+\sum_{i=4}^n r(X_{i-1}\cup X_i)
\end{align*}
\end{dfn}

Note that inequality $n$ has $2n-3$ terms on each side.

\begin{center}
\includegraphics[scale=0.6]{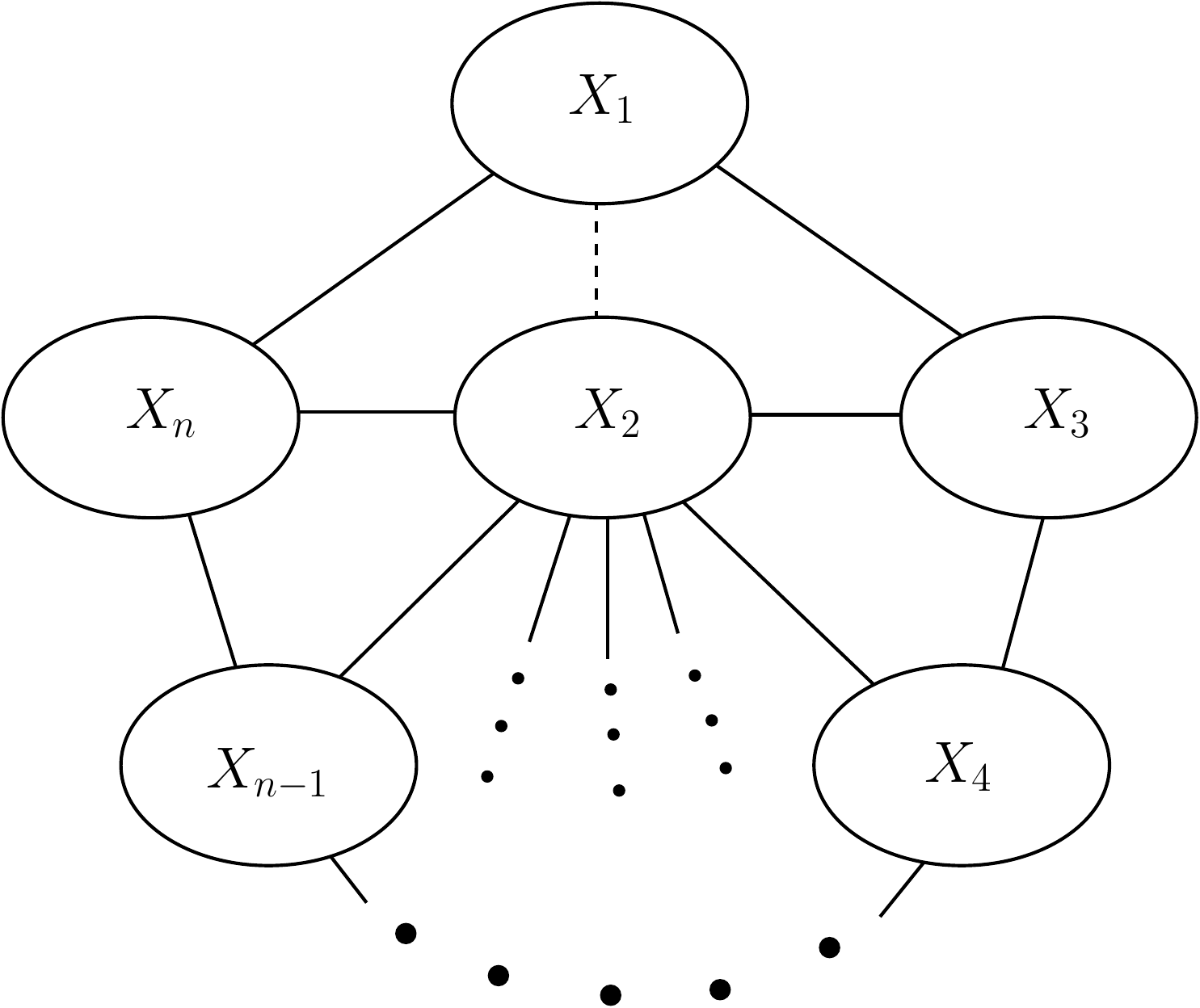}
\end{center}

The above diagram gives a representation of Kinser inequality $n$. The ovals represent the $n$ subsets of $E(M)$, and each edge aside from the dotted one between $X_1$ and $X_2$ represents a term on the right-hand side of the inequality. On the left-hand side, we have the singleton sets starting from $X_3$, the triple $X_1\cup X_3\cup X_n$ at the very top, the dashed $X_1\cup X_2$ edge, and every triangle of edges involving $X_2$, excluding those using $X_1$.

When $n=4$, this yields the \emph{Ingleton inequality} [1971], which holds for any four subspaces $X_1,\ldots,X_4$ of a vector space:
\begin{align*}
\text{dim}(V_3)+\text{dim}(V_4)+\text{dim}(V_1+ V_2)+ \text{dim}(V_1+V_3+V_4)+\text{dim}(V_2+V_3+V_4)\\
\leq \text{dim}(V_1+V_3)+\text{dim}(V_1+V_4)+\text{dim}(V_2+V_3) +\text{dim}(V_2+V_4)+\text{dim}(V_3+V_4)
\end{align*}
As a representable matroid can be embedded inside a vector space, this inequality clearly holds for such matroids. In fact, in order for a matroid to be representable, it must satisfy each Kinser inequality for all choices of families \vect{X}{n}.

Recall that if $X$ and $Y$ are subspaces of some vector space $\mathcal{V}$, then $$X+Y=\{ {\bf x}+{\bf y} \ | \ {\bf x}\in X, {\bf y}\in Y\}$$ is a subspace of $\mathcal{V}$ as well.

The following proof is adapted from that of \cite[Theorem 1]{kinser}, which was stated in terms of an arrangement of $n$ subspaces.

\begin{lemma}
\label{rep}
A representable matroid $M$ satisfies each Kinser inequality.
\end{lemma}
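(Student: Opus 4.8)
The plan is to reduce the statement to an inequality about the dimensions of subspaces of a vector space, and then to prove that inequality by reproducing Kinser's argument. Since $M$ is representable, we may identify $E(M)$ with a set of vectors in a vector space $\mathcal V$ so that $r(Y)=\dim\langle Y\rangle$ for every $Y\subseteq E(M)$, where $\langle Y\rangle$ denotes the span of $Y$. Put $V_i=\langle X_i\rangle$; each $V_i$ is a subspace of $\mathcal V$, and since the span of a union of sets equals the sum of their spans, $r\bigl(\bigcup_{i\in S}X_i\bigr)=\dim\bigl(\sum_{i\in S}V_i\bigr)$ for every $S\subseteq\{1,\dots,n\}$, and, as noted above, each such sum is again a subspace of $\mathcal V$. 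Substituting these identities into every term of the $n$-th Kinser inequality for $M$ produces exactly the assertion that the dimension function obeys the analogous inequality for the subspaces $V_1,\dots,V_n$. So it suffices to prove this \emph{subspace form} for arbitrary subspaces of $\mathcal V$.

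To prove the subspace form I would use only three ingredients: monotonicity of $\dim$; the modular law, $\dim(U+W)+\dim(U\cap W)=\dim U+\dim W$; and the elementary fact that if subspaces $P$ and $Q$ each lie inside both $U$ and $W$ then $P+Q\subseteq U\cap W$, so that $\dim(U\cap W)\ge\dim(P+Q)$. (A typical application of the last fact takes $U=V_1+V_j$, $W=V_2+V_j$, $P=V_j$ and $Q=V_1\cap(V_2+V_j)$, giving a lower bound for $\dim\bigl((V_1+V_j)\cap(V_2+V_j)\bigr)$.) Following Kinser, I would introduce a family of auxiliary subspaces indexed along the path $V_3,V_4,\dots,V_n$ of the diagram above, each built by forming sums and intersections of the $V_j$ together with $V_1$ and $V_2$, record for every edge of that path one modular-law identity together with one application of the containment fact, and then add all of these relations together so that the intersection-dimension terms cancel in a telescoping pattern, discarding whatever survives by monotonicity. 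When $n=4$ the path has a single edge, and the argument is precisely a proof of the Ingleton inequality stated earlier; since there is no smaller case, the Ingleton inequality is not assumed but is the base instance of this same construction.

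The reduction to subspaces is routine; the entire difficulty lies in the second step, namely choosing the auxiliary subspaces so that after summation every leftover error term is non-negative and on the correct side of the inequality. I do not expect a shortcut by induction on $n$: deleting $V_n$ (or any single $V_i$) and trying to deduce the $n$-th inequality from the $(n-1)$-st leaves a residual inequality that is false for general subspaces, and splitting the $n$-th inequality into a sum of weaker, more symmetric pieces also loses too much. So the construction has to be carried out globally, with the telescoping chain tuned exactly so that all of the modular-law remainders cancel.
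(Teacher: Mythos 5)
Your plan takes essentially the same route as the paper (which itself adapts Kinser's Theorem 1): translate ranks into dimensions of spans and then prove the subspace form via submodularity, monotonicity, and a telescoping sum along the path $V_3,\dots,V_n$. The high-level description is accurate, and your observation that a naive induction on $n$ won't work is also consistent with the paper's non-inductive argument. The one thing your sketch leaves floating is precisely the choice of auxiliary subspaces, which you rightly flag as the whole difficulty; for the record, the paper's (Kinser's) choice is concrete and worth knowing. Set $W=\bigcap_{i=3}^n\langle V_i\rangle$; two applications of submodularity give
\[
\dim\bigl(W+\langle V_1\rangle+\langle V_2\rangle\bigr)-\dim\bigl(W+\langle V_1\rangle\bigr)\le\dim\bigl(W+\langle V_2\rangle\bigr)-\dim W,
\]
and the two sides are then bounded separately. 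The left-hand side is bounded below using the containment $W+\langle V_1\rangle\subseteq(\langle V_1\rangle+\langle V_3\rangle)\cap(\langle V_1\rangle+\langle V_n\rangle)$, which produces the terms $r(X_1\cup X_3)$, $r(X_1\cup X_n)$, $r(X_1\cup X_3\cup X_n)$, $r(X_1\cup X_2)$. The right-hand side equals $\dim\langle V_2\rangle-\dim(\langle V_2\rangle\cap W)$, and the telescoping chain is the descending sequence $\langle V_2\rangle\supseteq\langle V_2\rangle\cap\langle V_3\rangle\supseteq\cdots\supseteq\langle V_2\rangle\cap\cdots\cap\langle V_n\rangle=\langle V_2\rangle\cap W$: each successive difference is bounded above via the containment $\langle V_i\rangle+(\langle V_2\rangle\cap\cdots\cap\langle V_{i-1}\rangle)\subseteq(\langle V_i\rangle+\langle V_2\rangle)\cap(\langle V_i\rangle+\langle V_{i-1}\rangle)$, which yields the terms $r(X_i)$, $r(X_2\cup X_i)$, $r(X_{i-1}\cup X_i)$, $r(X_2\cup X_{i-1}\cup X_i)$. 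Summing these per-edge bounds and combining with the first inequality gives exactly inequality $n$. Your outline is therefore not a genuinely different route but the same one, with this construction left as a promissory note rather than carried out.
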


\begin{proof}
Let $M$ be a representable matroid, and let \vect{V}{n} be subsets of $E(M)$. Embed $M$ in the projective geometry $PG(r-1,\mathcal{K})$ and replace each $V_i$ with its closure, $\langle V_i\rangle$, in the projective geometry. Let $W=\langle V_3\rangle\cap\ldots\cap\langle V_n\rangle$. Let $|\langle V_i\rangle|$ denote the dimension of $\langle V_i\rangle$. Using submodularity, we have that
\begin{align*}
|\langle W\rangle+ \langle V_1\rangle|+|\langle W\rangle+ \langle V_2\rangle| & \geq |(\langle W\rangle+\langle V_1\rangle )\cap(\langle W\rangle+\langle V_2\rangle )|\\
& \qquad+|\langle W\rangle+ \langle V_1\rangle+ \langle V_2\rangle |\\
& \geq |\langle W\rangle+ (\langle V_1\rangle\cap \langle V_2\rangle ) |+|\langle W\rangle+ \langle V_1\rangle+ \langle V_2\rangle |\\
& \geq |\langle W\rangle |+|\langle W\rangle+ \langle V_1\rangle+ \langle V_2\rangle |
\end{align*}

Rearranging this, we get that
\begin{equation}
|\langle W\rangle+ \langle V_1\rangle+ \langle V_2 \rangle | -|\langle W\rangle+ \langle V_1\rangle | \leq |\langle W\rangle+ \langle V_2\rangle | -|\langle W\rangle |
\end{equation}
We will give a bound on each side of this inequality.

Note that $|\langle W\rangle+ \langle V_1\rangle+ \langle V_2\rangle |\geq |\langle V_1\rangle+ \langle V_2\rangle |$. \\
As $\langle W\rangle+ \langle V_1\rangle\subseteq (\langle V_1\rangle+ \langle V_3\rangle)\cap (\langle V_1\rangle+ \langle V_n\rangle )$, we have by submodularity that
$$|\langle W\rangle+ \langle V_1\rangle |\leq |\langle V_1\rangle + \langle V_3\rangle |+|\langle V_1\rangle+ \langle V_n\rangle |-|\langle V_1\rangle +\langle V_3\rangle + \langle V_n\rangle |$$
This gives us a lower bound for the left-hand side of (3.2.1):
\begin{align*}
|\langle V_1\rangle+ \langle V_2\rangle |-|\langle V_1\rangle + \langle V_3\rangle |-|\langle V_1\rangle +\langle V_n\rangle |+|\langle V_1\rangle+ \langle V_3\rangle + \langle V_n\rangle| \\
\leq |\langle W\rangle+ \langle V_1\rangle + \langle V_2\rangle |-|\langle W\rangle+ \langle V_1\rangle |
\end{align*}

Now take the right-hand side. We have that $$|\langle W\rangle+ \langle V_2\rangle |-|\langle W\rangle |= |\langle V_2\rangle |-|\langle V_2\rangle\cap\langle W\rangle |$$
Note that $V_2\supseteq V_2\cap V_3\supseteq\ldots\supseteq V_2\cap\ldots\cap V_n=V_2\cap W$.
This gives that
\begin{equation}
|\langle V_2\rangle |-|\langle V_2\rangle\cap\langle W\rangle |=\sum_{i=3}^n (|\langle V_2\rangle\cap\ldots\cap\langle V_{i-1}\rangle |-|\langle V_2\rangle\cap\ldots\cap\langle V_i\rangle |)
\end{equation}
For each summand above, we give an upper bound: for $3\leq i\leq n$, submodularity gives that
$$|\langle V_2\rangle\cap\ldots\cap\langle V_{i-1}\rangle |-|\langle V_2\rangle\cap\ldots\cap\langle V_i\rangle |= |\langle V_i\rangle+(\langle V_2\rangle\cap\ldots\cap\langle V_{i-1})\rangle |-|\langle V_i\rangle |$$
As $\langle V_i\rangle+(\langle V_2\rangle\cap\ldots\cap \langle V_{i-1}\rangle)\subseteq (\langle V_i\rangle+ \langle V_2\rangle)\cap (\langle V_i\rangle+\langle V_{i-1}\rangle)$, we have
\begin{align*}
|\langle V_i\rangle+ (\langle V_2\rangle\cap\ldots\cap\langle V_{i-1}\rangle ) |-|\langle V_i\rangle | &\leq |(\langle V_i\rangle+ \langle V_2\rangle )\cap (\langle V_i\rangle+ \langle V_{i-1})\rangle |-|\langle V_i\rangle | \\
&= |\langle V_i\rangle+ \langle V_2\rangle |+|\langle V_i\rangle+\langle V_{i-1}\rangle |\\
&\qquad -|\langle V_2\rangle+ \langle V_{i-1}\rangle+ \langle V_i\rangle |-|\langle V_i\rangle |
\end{align*}

Note that when $i=3$ this simplifies to
\begin{align*}
|\langle V_3\rangle+ \langle V_2\rangle |-|\langle V_3\rangle | & \leq
|\langle V_3\rangle+ \langle V_2\rangle |+|\langle V_3\rangle+\langle V_2\rangle |-|\langle V_2\rangle+ \langle V_3\rangle |-|\langle V_3\rangle |\\
& = |\langle V_2\rangle+ \langle V_3\rangle |-|\langle V_3\rangle |
\end{align*}

Plugging this into (3.2.2) then (3.2.1) gives, after rearranging,
\begin{align*}
\sum_{i=3}^n |\langle V_i\rangle |+|\langle V_1\rangle+ \langle V_2\rangle |+|\langle V_1\rangle+ \langle V_3\rangle+ \langle V_n\rangle |+\sum_{i=4}^n |\langle V_2\rangle + \langle V_{i-1}\rangle+\langle X_i\rangle | \\
\leq |\langle V_1\rangle+\langle V_3\rangle |+|\langle V_1\rangle+\langle V_n\rangle |+\sum_{i=3}^n |\langle V_2\rangle+\langle V_i\rangle |+\sum_{i=4}^n |\langle V_{i-1}\rangle+\langle V_i\rangle |
\end{align*}
Note that $|\langle V_i\rangle|=r(V_i)$. In order to show that inequality $n$ holds, we must show that $|\langle V_i\rangle+\langle V_j\rangle|=r(V_i\cup V_j)$. We have that $$r(V_i\cup V_j)=|\langle V_i\cup V_j\rangle |$$ We will show that this is equal to $|\langle V_i\rangle+\langle V_j\rangle |$. 

Let $x\in\langle V_i\rangle+\langle V_j\rangle$. This means that $x=x_1+x_2$ where $x_1\in\langle V_i\rangle$ and $x_2\in\langle V_j\rangle$. We have that $x_1\in\langle V_i\cup V_j\rangle$ and $x_2\in\langle V_i\cup V_j\rangle$, so $x\in\langle V_i\cup V_j\rangle$. Now take $x\in\langle V_i\cup V_j\rangle$. We can write $x$ as a linear combination of elements $S_i$ from $V_i$ and elements $S_j$ from $V_j$. We have that $S_i\subseteq \langle V_i\rangle$ and that $S_j\subseteq \langle V_j\rangle$, so $x\in\langle V_i\rangle +\langle V_j\rangle$. Thus $|\langle V_i\cup V_j\rangle |=|\langle V_i\rangle+\langle V_j\rangle |$. We can therefore replace every term $|\langle V_i\rangle+\langle V_j\rangle |$ with $r(V_i\cup V_j)$. Similarly, $|\langle V_i\cup V_j\cup V_k\rangle |=|\langle V_i\rangle+\langle V_j\rangle+\langle V_k\rangle |$. Making all such replacements yields inequality $n$.

\end{proof}
We say that a \emph{bad family} for a matroid $M$, relative to $n$, is a family of subsets \vect{X}{n} which does not satisfy Kinser inequality $n$. 

We can also represent an inequality as applied to a specific matroid with a graph. Let \vect{X}{n} be a family of subsets of a matroid $M$. Take a graph $G$ on vertices $V=\{X_1,\ldots,X_{n}\}$ with adjacency relation $a$ such that $$a(X_i)=\{X_j \ | \ X_i\cup X_j \ \mathrm{is} \ \mathrm{a} \ \mathrm{term} \ \mathrm{on} \ \mathrm{the} \ \text{right-hand side} \ \mathrm{of} \ \mathrm{inequality} \ n\}$$ In other words, two vertices are joined by an edge if the union of the two vertices is a term in inequality $n$. Recall that when $G[V,E]$ is any graph with vertex set $V$ and edge set $E$, an induced subgraph $G[E']$, has edge set $E'$ and vertex set equal to the vertices incident with edges in $E'$. We will use this construction to show that certain subgraph structures cannot exist, when attempting to find a bad family in a matroid.

\begin{dfn}
\label{classs}
\emph{ Kinser class} $n$, denoted by $\mathcal{K}_n$, is the set of matroids which satisfy Kinser inequality $n$ for all families of subsets \vect{X}{n} of the ground set. We define $\mathcal{K}_\infty=\bigcap_{i\geq 4}\mathcal{K}_i$. 
\end{dfn}

A matroid $M$ has a bad family relative to $n$ if and only if $M\notin\mathcal{K}_n$.

\begin{dfn}
\label{dual class}
The \emph{dual Kinser class} $n$ is $\mathcal{K}_n^*=\{ M^* \ | \ M\in\mathcal{K}_n\}$
\end{dfn}

\section{Kinser matroids}

Next we will construct a class of matroids called \emph{Kinser matroids}, relating to the Kinser inequalities as the V\'{a}mos matroid relates to the Ingleton inequality. In fact, the V\'{a}mos matroid is the fourth Kinser matroid after a circuit-hyperplane having been relaxed. The rank $r$ Kinser matroid, for $r\geq 4$, is denoted by \kin{r}, and has a ground set of size $r^2-3r+4$.

First, we will define a rank $r+1$ tranversal matroid, $M_{r+1}$. Let $\mathcal{A}=(A_1,\ldots,A_{r-1},A,A')$. Also let $V_1,\ldots, V_r$ be pairwise disjoint sets such that $$|V_1|=|V_3|=\cdots=|V_r|=r-2$$ and $V_2=\{e,f\}$. The ground set of $M_{r+1}$ is $V_1\cup\cdots\cup V_r$. Let $A=E(M_{r+1})$ and let $A'=V_2$. Let 
\begin{align*}
A_1 &= (V_1\cup V_3\cup\cdots\cup V_r)-(V_1\cup V_r) \\
A_3 &= (V_1\cup V_3\cup\cdots\cup V_r)-(V_1\cup V_3)
\end{align*}
For $i\in\{4,\ldots,r\}$, let
$$A_i=(V_1\cup V_3\cup\cdots\cup V_r)-(V_{i-1}\cup V_i)$$
Then $M_{r+1}$ is the tranversal matroid $M[\mathcal{A}]$. 

Note $\{e,f\}$ is a series pair in $M_{r+1}$.

Define \kin{r} to be the truncation of $M_{r+1}$. 

\pagebreak

\begin{figure}[ht]
\begin{center}
\begin{tikzpicture}[thin,line join=round]
\coordinate (o) at (0,0);
\coordinate[label=$V_1$] (v1) at (150:3);
\coordinate[label=$V_2$] (v2) at (88:3);
\coordinate[label=$V_3$] (v3) at (5:3);
\coordinate[label=0:$V_4$] (v4) at (-105:2);
\coordinate (a) at ($(o)!0.67!(v1)$);
\coordinate (b) at ($(o)!0.33!(v1)$);
\coordinate (c) at ($(o)!0.67!(v4)$);
\coordinate (d) at ($(o)!0.33!(v4)$);
\coordinate[label=0:$e$] (e) at ($(o)!0.67!(v2)$);
\coordinate[label=0:$f$] (f) at ($(o)!0.33!(v2)$);
\coordinate (g) at ($(o)!0.67!(v3)$);
\coordinate (h) at ($(o)!0.33!(v3)$);
\draw (v1) -- (o) -- (v3);
\draw (v2) -- (o) -- (v4);
\foreach \p in {a, b, c, d, e, f, g, h} \fill[black] (\p) circle (3pt);
\end{tikzpicture}
\caption{\kin{4}}\label{drunk-plus}
\end{center}
\end{figure}

\begin{figure}[ht]
\begin{center}
\begin{tikzpicture}[thin,line join=round]
\coordinate (a) at (0,0);
\coordinate (b) at (5,0);
\coordinate (c) at (0,3);
\coordinate (d) at (5,3);
\coordinate (o) at (intersection of c--b and a--d);
\coordinate[label=$V_2$] (v2) at ($(o) + (1.5,2.5)$);
\coordinate[label=0:$e$] (e) at ($(o)!0.8!(v2)$);
\coordinate[label=0:$f$] (f) at ($(o)!0.45!(v2)$);
\coordinate (g) at ($(o) + (-0.3,0.8)$);
\coordinate (h) at ($(g) + (-0.4, 0.4)$);
\coordinate (i) at ($(g) + (0.4, 0.4)$);
\coordinate (j) at ($(o) + (0,-0.8)$);
\coordinate (k) at ($(j) + (-0.7, -0.4)$);
\coordinate (l) at ($(j) + (0.7, -0.4)$);
\coordinate (m) at ($(o) + (1.2, 0)$);
\coordinate (n) at ($(m) + (0.7, 0.4)$);
\coordinate (p) at ($(m) + (0.7, -0.4)$);
\coordinate (q) at ($(o) + (-1.2, 0)$);
\coordinate (r) at ($(q) + (-0.7, 0.4)$);
\coordinate (s) at ($(q) + (-0.7, -0.4)$);
\draw (a) -- (b) -- (d) -- (c) -- (a) -- (d);
\draw (c) -- (b);
\draw (o) -- (v2);
\foreach \p in {e, f, g, h, i, j, k, l, m, n, p, q, r, s} \fill[black] (\p) circle (3pt);
\node at ($(c)!0.5!(d) + (0, 0.3)$) {$V_1$};
\node at ($(d)!0.5!(b) + (0.3, 0)$) {$V_3$};
\node at ($(b)!0.5!(a) + (0, -0.3)$) {$V_4$};
\node at ($(c)!0.5!(a) + (-0.3, 0)$) {$V_5$};
\end{tikzpicture}
\caption{\kin{5}}\label{envelope}
\end{center}
\end{figure}

The following result is Proposition 4.3 of \cite{missingaxiom}

\begin{lemma}
Let $\mathbb{K}$ be an infinite field. Then \kin{r} is $\mathbb{K}$-representable for any $r\geq 4$.
\end{lemma}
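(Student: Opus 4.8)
The plan is to exploit the fact that $\kin{r}$ is, by construction, the truncation of a transversal matroid, and to combine two standard results: transversal matroids are representable over every infinite field, and truncation preserves $\mathbb{K}$-representability when $\mathbb{K}$ is infinite. Nothing in the argument is specific to the Kinser construction beyond identifying $\kin{r}$ with $T(M_{r+1})$.

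First I would observe that $M_{r+1}=M[\mathcal{A}]$ is a transversal matroid, so by \cite[Proposition 11.2.16]{Oxley} it is representable over all sufficiently large fields; since an infinite field has more elements than any finite bound, $M_{r+1}$ is $\mathbb{K}$-representable. If a self-contained argument is preferred, one writes down the $(r+1)\times(r^2-3r+4)$ matrix over the rational function field $\mathbb{K}(x_{ij})$ whose $(i,j)$ entry is an indeterminate $x_{ij}$ when the $j$th ground-set element lies in the $i$th member of $\mathcal{A}$ and is $0$ otherwise; this matrix represents $M_{r+1}$, and since every maximal minor that is a nonzero polynomial has coefficients in $\{0,\pm 1\}$ it remains a nonzero polynomial over $\mathbb{K}$, so a specialisation of the $x_{ij}$ avoiding the finitely many vanishing loci --- which exists because $\mathbb{K}$ is infinite --- gives a $\mathbb{K}$-representation.

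Next I would use that $\kin{r}=T(M_{r+1})$ by Definition~\ref{truncation} and that the truncation of any matroid $N$ representable over an infinite field $\mathbb{K}$ is again $\mathbb{K}$-representable. The usual argument: take vectors $v_1,\dots,v_n$ spanning $\mathbb{K}^{r(N)}$ that represent $N$, pick $v_p\in\mathbb{K}^{r(N)}$ lying in none of the finitely many hyperplanes spanned by subsets of $\{v_1,\dots,v_n\}$ (possible since a vector space over an infinite field is not a union of finitely many proper subspaces), and note that $v_1,\dots,v_n,v_p$ represent the free extension $N^{+}$ of $N$ by a new element $p$. Then $T(N)=N^{+}/p$, and contracting $p$ amounts to passing to the quotient space $\mathbb{K}^{r(N)}/\langle v_p\rangle$, so $T(N)$ is $\mathbb{K}$-representable. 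Taking $N=M_{r+1}$ yields the claim.

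The only delicate point is bookkeeping around the phrase ``sufficiently large'': one must check that the field in the transversal-representability result may be taken to be the given $\mathbb{K}$ rather than some a priori larger extension, and that the genericity needed for the free extension can be carried out over that same $\mathbb{K}$. Both reduce to the single observation that finitely many proper Zariski-closed conditions can be avoided simultaneously in $\mathbb{K}^{N}$ once $\mathbb{K}$ is infinite, so there is no genuine obstacle --- the content is simply the assembly of the two known reductions.
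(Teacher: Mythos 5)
Your proposal is correct and follows essentially the same route as the paper: cite \cite[Proposition 11.2.16]{Oxley} to get that the transversal matroid $M_{r+1}$ is $\mathbb{K}$-representable, then realise $\kin{r}=T(M_{r+1})$ as a free extension followed by a contraction, both of which preserve $\mathbb{K}$-representability over an infinite field. The additional detail you give about generic specialisation and avoiding finitely many proper subspaces is just the usual justification for those two steps, not a deviation from the paper's argument.
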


As $M_{r+1}$ is a tranversal matroid, is it representable over every infinite field by \cite[Proposition 11.2.16]{Oxley}. We obtain \kin{r} by truncating $M_{r+1}$. This is equivalent to freely adding an element to the ground set of $M_{r+1}$ and then contracting it. As the class of representable matroids is closed under free extensions, \kin{r} is also representable.

The following result is proven in \cite[Proposition 4.4]{missingaxiom}.

\begin{lemma}
Let $r\geq 4$ be an integer. Then $V_2\cup V_i$ is a circuit-hyperplane of \kin{r} for any $i\in\{1,3,\ldots,r\}$.
\end{lemma}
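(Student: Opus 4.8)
The plan is to carry out the whole argument inside the transversal matroid $M_{r+1}$, via the bipartite-graph / partial-transversal description of its rank, and then to transfer the conclusion to the truncation $\mathrm{Kin}(r)=T(M_{r+1})$. Write $H=V_2\cup V_i$; as $|V_2|=2$ and $|V_i|=r-2$, we have $|H|=r$. I will show that $H$ is at once a \emph{circuit} of $M_{r+1}$ and a \emph{flat of $M_{r+1}$ of rank $r-1$}. Granting this, the claim is just bookkeeping: by Definition~\ref{truncation}, $r_{\mathrm{Kin}(r)}(X)=\min(r_{M_{r+1}}(X),r)$ for every $X\subseteq E$. A circuit $C$ of $M_{r+1}$ with $|C|=r$ is still a circuit of $\mathrm{Kin}(r)$, since $r_{\mathrm{Kin}(r)}(C)=\min(r-1,r)=|C|-1$ while every proper subset $Y$ has $|Y|<r$ and so $r_{\mathrm{Kin}(r)}(Y)=r_{M_{r+1}}(Y)=|Y|$. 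A flat $F$ of $M_{r+1}$ with $r_{M_{r+1}}(F)=r-1=r(M_{r+1})-2$ is still a flat of $\mathrm{Kin}(r)$: for $x\notin F$ we have $r_{M_{r+1}}(F\cup x)=r$, so $r_{\mathrm{Kin}(r)}(F\cup x)=r>r-1=r_{\mathrm{Kin}(r)}(F)$; and since $r(\mathrm{Kin}(r))=r$, a flat of rank $r-1$ is a hyperplane. Hence $H$ is a circuit-hyperplane of $\mathrm{Kin}(r)$.

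So everything reduces to an analysis of $M_{r+1}=M[\mathcal{A}]$. The organising observation is that $A_1,A_3,A_4,\dots,A_r$ are precisely the complements, within $V_1\cup V_3\cup\cdots\cup V_r$, of the pairs $\{V_r,V_1\},\{V_1,V_3\},\{V_3,V_4\},\dots,\{V_{r-1},V_r\}$, which form the edge set of a cycle on the $r-1$ parts $V_1,V_3,\dots,V_r$. A rotation of that cycle, lifted to $E$ by bijections between the (equal-size) parts and the identity on $V_2$, therefore induces an automorphism of $M_{r+1}$ (hence of $\mathrm{Kin}(r)$) that permutes the $A_k$ and fixes $A=E$ and $A'=V_2$; as these rotations act transitively on $\{V_1,V_3,\dots,V_r\}$, it suffices to treat $i=1$, i.e.\ $H=V_2\cup V_1$. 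Now: no $A_k$ meets $V_2$, so the elements of $V_2$ lie only in $A$ and $A'$; and among the $A_k$ exactly $A_4,\dots,A_r$ (the $r-3$ cycle-edges not at $V_1$) contain $V_1$. Thus only the $r-1$ sets $A,A',A_4,\dots,A_r$ meet $H$, so no partial transversal inside $H$ has more than $r-1<|H|$ elements and $H$ is dependent in $M_{r+1}$. For each $x\in H$, the set $H-x$ is a partial transversal: send the element(s) of $(H-x)\cap V_2$ to $A'$ and, if there are two of them, also to $A$, and then match $(H-x)\cap V_1$ bijectively onto the remaining sets among $A,A_4,\dots,A_r$ (each of which contains $V_1$) — the counts match in both cases. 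Hence every proper subset of $H$ is independent, and $H$ is a circuit of $M_{r+1}$, of rank $r-1$.

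For the flat condition, fix $x\in E-H$; I must show $r_{M_{r+1}}(H\cup x)=r$. Upper bound: suppose $H\cup x$ (which has $r+1$ elements) were independent in $M_{r+1}$, witnessed by an injection $\phi$ from $H\cup x$ into $\mathcal{A}$ with $z\in\phi(z)$ for all $z$. Since $e,f$ lie only in $A$ and $A'$, necessarily $\{\phi(e),\phi(f)\}=\{A,A'\}$, so the $r-1$ elements of $V_1\cup\{x\}$ are mapped injectively into $\{A_1,A_3,A_4,\dots,A_r\}$; but the $r-2$ elements of $V_1$ meet only $A_4,\dots,A_r$, a set of $r-3$ indices, which is impossible. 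Hence $r_{M_{r+1}}(H\cup x)\le r$. Lower bound: $A_1=V_3\cup\cdots\cup V_{r-1}$ and $A_3=V_4\cup\cdots\cup V_r$ between them contain every one of $V_3,\dots,V_r$, so $x\in A_1\cup A_3$; matching $e\to A'$, $f\to A$, some $r-3$ elements of $V_1$ onto $A_4,\dots,A_r$, and $x$ onto whichever of $A_1,A_3$ contains it produces a partial transversal of size $r$. Therefore $r_{M_{r+1}}(H\cup x)=r>r-1=r_{M_{r+1}}(H)$ for all $x\notin H$, so $H$ is a flat of $M_{r+1}$ of rank $r-1$, and the proof is complete.

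The step I expect to be the real obstacle is the flat condition: one must confirm, uniformly in $r$ and in where $x$ sits, that adjoining any element outside $H$ genuinely raises the rank, and the subtle point is that the two cycle-edges $A_1,A_3$ incident with $V_1$ contain no element of $V_1$ and so are "wasted" until we are allowed to use one of them for $x$. Pinning that matching argument down, and correctly tracking which of $A,A'$ the elements $e,f$ consume, is the crux; the reduction through the truncation (which works only because of the numerical coincidences $|H|=r=r(M_{r+1})-1$ and $r_{M_{r+1}}(H)=r(M_{r+1})-2$) and the cyclic-symmetry reduction to $i=1$ are routine by comparison.
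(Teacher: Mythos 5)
The paper does not actually prove this lemma: it is stated and immediately attributed to Proposition~4.4 of the Mayhew--Newman--Whittle reference, with no argument given. So there is no internal proof to compare against, and I assess your proof on its own terms.

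Your proof is correct. The reduction to $M_{r+1}$ via the truncation rank formula is sound (and, as you observe, relies precisely on the numerical coincidences $|H|=r$, $r_{M_{r+1}}(H)=r(M_{r+1})-2$). The cyclic-symmetry reduction to $i=1$ is valid because $A_1,A_3,A_4,\dots,A_r$ really are the edges of a cycle on the parts $V_1,V_3,\dots,V_r$ and $A,A'$ are fixed by the induced rotation. The circuit computation checks: only $A,A'$ meet $V_2$, only $A_4,\dots,A_r$ (together with $A$) meet $V_1$, so only $r-1$ members of $\mathcal{A}$ meet $H$, bounding any matching in $H$ by $r-1$; and your case split on $|(H-x)\cap V_2|$ does give a matching of size $r-1$ for every $x\in H$. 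The flat computation also checks: the lower bound works because $A_1\cup A_3 \supseteq V_3\cup\cdots\cup V_r = E-H$, and your matching of size $r$ is valid; the upper bound argument (an injective assignment would force $r-2$ elements of $V_1$ into the $r-3$ sets $A_4,\dots,A_r$) is correct, although strictly speaking it is redundant, since $H$ is already known to be dependent of rank $r-1$, so $r_{M_{r+1}}(H\cup x)\le r$ is automatic. A small stylistic point: you implicitly use that a flat of $M_{r+1}$ of rank $r(M_{r+1})-2$ remains a flat of the truncation, which you justify directly and correctly via the rank formula for $T(M_{r+1})$; it would be worth one extra sentence to say you are using that $r_{T(M)}(X)=\min(r_M(X),r(M)-1)$, as the paper's Definition~\ref{truncation} is phrased in terms of independent sets rather than ranks.
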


Define $\kin{r}^-$ to be the matroid obtained from \kin{r} by relaxing the circuit-hyperplane $V_1\cup V_2$. Also define $\kin{r}_i^=$ to be the matroid obtained from \kin{r} by relaxing the circuit-hyperplanes $V_1\cup V_2$ and $V_2\cup V_i$, for some $i\in\{3,\ldots,r\}$.

The next two results are Proposition 4.5 and Lemma 4.6 of \cite{missingaxiom}.

\begin{lemma}
\label{kin-}
Let $r\geq 4$. The matroid $\kin{r}^-$ is not in $\mathcal{K}_r$, and is therefore not representable over any field.
\end{lemma}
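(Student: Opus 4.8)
The plan is to exhibit a specific bad family $(X_1,\ldots,X_r)$ of subsets of $E(\kin{r}^-)$ for which the $r$-th Kinser inequality fails; since every representable matroid satisfies every Kinser inequality by Lemma~\ref{rep}, this immediately gives non-representability. The natural candidate, by analogy with the V\'amos matroid and the construction of $\kin{r}$ from the transversal presentation $\mathcal{A}=(A_1,\ldots,A_{r-1},A,A')$, is to take $X_i = V_i$ for $i\in\{1,3,\ldots,r\}$ and $X_2 = V_2 = \{e,f\}$, i.e.\ the sets used to index the circuit-hyperplanes. The point of relaxing $V_1\cup V_2$ is precisely that it changes $r(V_1\cup V_2)$ from $r-1$ to $r$, increasing the left-hand side by one, so one expects the inequality to hold with equality in $\kin{r}$ and to fail by exactly $1$ in $\kin{r}^-$.

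First I would compute, in $\kin{r}$ itself, the rank of every set appearing in the inequality for this family. Here the key facts are: $r(\kin{r}) = r$ (it is the truncation of the rank-$(r+1)$ matroid $M_{r+1}$); each $V_i$ with $i\in\{1,3,\ldots,r\}$ has $|V_i| = r-2$ and is independent, so $r(V_i) = r-2$; $V_2 = \{e,f\}$ is a series pair, hence independent with $r(V_2)=2$; each $V_2\cup V_i$ is a circuit-hyperplane, so has rank $r-1$; and the various pairwise and triple unions $V_{i-1}\cup V_i$, $V_1\cup V_3\cup V_r$, $V_2\cup V_{i-1}\cup V_i$ have ranks that can be read off from the transversal presentation via the matching description of independence in $G[\mathcal{A}]$ — concretely, a subset $X$ is independent in $M_{r+1}$ iff $X$ can be matched into the index set, and $\kin{r}$-independence is the same with the cardinality capped at $r$. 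I would tabulate all $2r-3$ terms on each side and check that equality holds in $\kin{r}$.

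Next I would invoke the relaxation: passing from $\kin{r}$ to $\kin{r}^-$ relaxes the circuit-hyperplane $V_1\cup V_2$, which raises $r(V_1\cup V_2)$ from $r-1$ to $r$ and leaves the rank of every other set in the family unchanged (one must check that no other term in the inequality is a spanning circuit that also gets affected, but $V_1\cup V_2$ is the only relaxed flat and ranks of all other sets are determined by independent subsets disjoint from this change). Since $r(V_1\cup V_2)$ appears once on the left-hand side and nowhere on the right, the left-hand side increases by exactly $1$ while the right-hand side is unchanged, so the inequality now reads (LHS) $=$ (RHS)$\,+1$, violating Kinser inequality $r$. Hence $\kin{r}^- \notin \mathcal{K}_r$, and since $\mathcal{K}_r$ contains all representable matroids, $\kin{r}^-$ is not representable over any field.

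The main obstacle is the bookkeeping in the first step: correctly evaluating the ranks of all the union terms — especially the triples $V_2\cup V_{i-1}\cup V_i$ and $V_1\cup V_3\cup V_r$ and the pairs $V_{i-1}\cup V_i$ — directly from the transversal presentation, and being careful about where the truncation cap of $r$ actually bites versus where it does not. Getting these right is what makes the two sides balance exactly in $\kin{r}$; a single miscounted term would break the argument. Everything after that is immediate from Lemma~\ref{relax2}-style reasoning about how relaxation affects a single flat, together with Lemma~\ref{rep}.
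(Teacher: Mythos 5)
Your proposal is correct and matches the paper's approach: the paper (in the proof of Lemma~\ref{gaps}, following \cite[Proposition 4.5]{missingaxiom}) exhibits the same family $V_1,\ldots,V_r$, reads off the ranks from the circuit-hyperplane and hyperplane structure of $\kin{r}^-$ (each $V_i$ independent of rank $r-2$, each $V_2\cup V_i$ a circuit-hyperplane, consecutive pairs $V_{i-1}\cup V_i$ and $V_1\cup V_3$, $V_1\cup V_r$ hyperplanes, all triple unions spanning), and arrives at $2r^2-5r+4\not\leq 2r^2-5r+3$. Your intermediate observation that equality holds in $\kin{r}$ and the inequality fails by exactly one after relaxing $V_1\cup V_2$ is precisely what that computation confirms.
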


The family of subsets \vect{V}{n} in $\kin{r}^-$ is a bad family relative to $r$, as will be proven in Lemma \ref{gaps}.

\begin{lemma}
\label{kin=}
Let $r\geq 4$ and let $\mathbb{K}$ be an infinite field. The matroid $\kin{r}_i^=$ is $\mathbb{K}$-representable.
\end{lemma}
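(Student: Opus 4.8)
The plan is to exhibit an explicit $\mathbb{K}$-representation of $\kin{r}_i^=$, mimicking the argument for $\kin{r}$ itself. Recall that $\kin{r}=T(M_{r+1})$ with $M_{r+1}$ transversal, so $\kin{r}$ is $\mathbb{K}$-representable: transversal matroids are representable over every infinite field by \cite[Proposition 11.2.16]{Oxley}, and truncation --- a free extension followed by a contraction --- preserves representability. It therefore suffices to take a representation $[v_x : x\in E(\kin{r})]$ of $\kin{r}$ over $\mathbb{K}$ and perturb it so that the two circuit-hyperplanes $V_1\cup V_2$ and $V_2\cup V_i$ become bases while every other non-basis of $\kin{r}$ stays dependent. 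One should keep Lemma~\ref{kin-} firmly in mind: relaxing $V_1\cup V_2$ \emph{alone} leaves the class of representable matroids, so any valid perturbation is forced to relax a second circuit-hyperplane as well --- the two relaxations are linked, and the argument must exhibit this linkage rather than resist it.

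Concretely I would fix the columns indexed by $V_2$ and by the $V_j$ with $j\in\{3,\dots,r\}\setminus\{i\}$, and replace the columns indexed by $V_1$ and by $V_i$ with generic vectors of $\mathbb{K}^r$. Since only the columns in $V_1\cup V_i$ move, the only subsets whose rank can change are those meeting $V_1\cup V_i$; and as nothing is enlarged, ranks can only increase, so it is enough to check that (i) $V_1\cup V_2$ and $V_2\cup V_i$ acquire rank $r$ for a generic choice, and (ii) no other dependent subset of $\kin{r}$ does. For the two ``boundary'' indices $i\in\{3,r\}$ this can even be seen at the level of transversal presentations: adjoining the element $e$ to the block $A_3$ (respectively $A_1$) of the presentation of $M_{r+1}$ frees the pair $\{e,f\}$ to be matched into two distinct blocks in a matching witnessing the rank of $V_1\cup V_2$ and of $V_2\cup V_3$ (respectively $V_2\cup V_r$), yet, because every other $V_j$ is saturated by a forced matching that already uses those blocks, it frees no other $V_2\cup V_j$; one then checks that truncating the enlarged transversal matroid yields $\kin{r}_i^=$, and transversality gives representability for free. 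For the intermediate indices the block structure no longer cooperates, and the perturbative argument (or, equivalently, an explicit matrix over $\mathbb{Q}$) seems unavoidable.

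The main obstacle is step (ii): showing that a generic perturbation of the $V_1$- and $V_i$-columns destroys \emph{exactly} the dependencies $V_1\cup V_2$ and $V_2\cup V_i$. This requires controlling every circuit of $\kin{r}$ that meets $V_1\cup V_i$ and showing each one either is one of the two target circuit-hyperplanes or survives the perturbation --- typically because it does not contain enough of $V_1\cup V_i$ to be broken, or because the span of its complement pins it down. The linkage flagged above ought to surface precisely here: once the $V_1$-columns are moved so that $V_1\cup V_2$ spans $\mathbb{K}^r$, the requirement that the remaining $V_2\cup V_j$ stay flat rigidly positions the $V_i$-columns so that $V_2\cup V_i$ becomes spanning too. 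Turning this into a complete proof is a finite but fiddly case analysis on the circuit structure of $\kin{r}$; the cleanest execution is probably to pin down one concrete matrix over the infinite field $\mathbb{K}$ and verify the bases by determinant computations, invoking infinitude of $\mathbb{K}$ to sidestep the finitely many bad specialisations.
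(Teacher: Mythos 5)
The paper itself offers no proof of this lemma; it simply records the statement as Lemma~4.6 of Mayhew, Newman, and Whittle~\cite{missingaxiom}. So there is no in-text argument against which to compare your proposal, and I will evaluate the proposal on its own merits.

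There is a genuine gap in the perturbation step, and it sits exactly where you suspect trouble but ultimately hope for the best. You propose to fix the columns of $V_2$ and of every $V_j$ with $j\in\{3,\dots,r\}\setminus\{i\}$ and replace the columns of $V_1$ and $V_i$ with generic vectors of $\mathbb{K}^r$, then verify that ``(ii) no other dependent subset of $\kin{r}$'' gains rank. But (ii) is false for a genuinely generic choice. The set $V_1\cup V_3$ (and likewise $V_1\cup V_r$) is a hyperplane of $\kin{r}$, of rank $r-1$, and it remains a hyperplane in $\kin{r}_i^=$ whenever $i\neq 3$ (resp.\ $i\neq r$), since relaxing $V_1\cup V_2$ and $V_2\cup V_i$ changes no other rank. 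It has $2(r-2)\geq r$ elements. If the $r-2$ columns of $V_1$ are chosen generically in $\mathbb{K}^r$, then together with the $r-2$ fixed columns of $V_3$ they span $\mathbb{K}^r$, so $V_1\cup V_3$ jumps to rank $r$. The same objection applies to $V_{i-1}\cup V_i$ and $V_i\cup V_{i+1}$ once the $V_i$-columns are moved generically, and when $i\in\{3,r\}$ it applies to $V_1\cup V_i$ itself, which stays a (non-circuit) hyperplane of $\kin{r}_i^=$. So a fully generic perturbation destroys far more of the dependency structure than the two circuit-hyperplanes you are licensed to relax; it does not produce $\kin{r}_i^=$.

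What the argument is missing is the constraint: the replacement columns for $V_1$ must be chosen freely \emph{within} the rank-$(r-1)$ subspaces spanned by $V_1\cup V_3$ and $V_1\cup V_r$ (more precisely, within their intersection, unless $i$ is one of $3,r$), and similarly the columns for $V_i$ must be confined to the subspaces that keep $V_{i-1}\cup V_i$ and $V_i\cup V_{i+1}$ at rank $r-1$. Only after imposing these constraints does ``generic'' make sense, and the real content of the lemma is then to show that the constrained position space is still rich enough to make both $V_1\cup V_2$ and $V_2\cup V_i$ spanning while not accidentally freeing any $V_2\cup V_j$ with $j\neq 1,i$. This is precisely the sort of ``freely place a point in a specified subspace of a projective geometry over an infinite field'' argument the thesis uses repeatedly elsewhere (cf.\ Lemma~\ref{free} and the constructions in the excluded-minor chapter), and it is almost certainly the shape of the cited proof in~\cite{missingaxiom}. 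Your sketch for $i\in\{3,r\}$ via altering the transversal presentation is also not established: adding $e$ to a block $A_j$ changes $M_{r+1}$ globally, and you have not checked that truncating the new transversal matroid yields exactly the two desired relaxations and nothing else. As written, the proposal identifies the right phenomenon (a two-circuit-hyperplane relaxation restoring representability) but does not supply the constrained free-placement argument that makes it go through.
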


\section{Kinser classes}

\begin{lemma}
\label{minors}
$\mathcal{K}_n$ is minor-closed for all $n\geq 4$.
\end{lemma}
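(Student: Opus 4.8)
The plan is to show that for $M \in \mathcal{K}_n$, both $M \backslash e$ and $M / e$ lie in $\mathcal{K}_n$; since every minor is obtained by a sequence of single-element deletions and contractions, this suffices. The strategy in each case is the same: given an arbitrary family of subsets $X_1, \dots, X_n$ of the ground set of the minor, we produce a family $X_1', \dots, X_n'$ of subsets of $E(M)$ such that the $n$-th Kinser inequality for $M$ applied to the primed family, after using the relationship between $r_M$ and the rank function of the minor, yields exactly the $n$-th Kinser inequality for the minor applied to the original family. Because the Kinser inequality is a fixed linear combination of rank values with $2n-3$ terms on each side, it is enough to check that each corresponding term transforms correctly.

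First I would handle deletion, which is the easy direction. If $M \backslash e = (E - e, r_{M \backslash e})$ and $X_1, \dots, X_n \subseteq E - e$, simply take $X_i' = X_i$. Since $r_{M \backslash e}(Y) = r_M(Y)$ for every $Y \subseteq E - e$, and every set appearing in the Kinser inequality for the family $X_1, \dots, X_n$ (the singletons $X_i$, the unions $X_1 \cup X_2$, $X_1 \cup X_3 \cup X_n$, the pairs $X_2 \cup X_i$ and $X_{i-1} \cup X_i$, and the triples $X_2 \cup X_{i-1} \cup X_i$) is a subset of $E - e$, the inequality for $M \backslash e$ is literally the inequality for $M$ on the same family. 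Hence $M \in \mathcal{K}_n$ forces $M \backslash e \in \mathcal{K}_n$.

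Next I would handle contraction, which is where the real work is. Here $r_{M/e}(Y) = r_M(Y \cup e) - r_M(e)$ for $Y \subseteq E - e$. Given $X_1, \dots, X_n \subseteq E - e$, set $X_i' = X_i \cup \{e\}$ for every $i$. Then for any union of some of the $X_i$, adjoining $e$ to each and taking the union gives the same set as taking the union and then adjoining $e$; so each term $r_{M/e}(\bigcup_{i \in S} X_i) = r_M\!\big((\bigcup_{i \in S} X_i) \cup e\big) - r_M(e) = r_M(\bigcup_{i \in S} X_i') - r_M(e)$. Thus the Kinser inequality for $M/e$ on the family $X_1, \dots, X_n$ is obtained from the Kinser inequality for $M$ on the family $X_1', \dots, X_n'$ by subtracting $r_M(e)$ from each of the $2n-3$ terms on each side; since both sides have the same number of terms, these contributions cancel, and the inequality for $M$ on the primed family is equivalent to the inequality for $M/e$ on the original family. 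Therefore $M \in \mathcal{K}_n$ forces $M/e \in \mathcal{K}_n$.

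The main obstacle, such as it is, is purely bookkeeping: one must confirm that every one of the $2n-3$ sets occurring on each side of Definition \ref{inequality} is indeed a union of some subcollection of $\{X_1, \dots, X_n\}$, so that the substitution $X_i \mapsto X_i \cup e$ commutes with forming those sets, and that the counts of terms on the two sides agree (they do — the remark after Definition \ref{inequality} records $2n-3$ terms per side) so that the $r_M(e)$ offsets cancel exactly. Once that is checked, combining the deletion and contraction cases and inducting on the number of elements removed gives that $\mathcal{K}_n$ is minor-closed for all $n \geq 4$.
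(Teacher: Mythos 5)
Your proof is correct and follows essentially the same route as the paper's: deletion is immediate because the rank function restricts, and for contraction one adjoins $e$ to every $X_i$ and uses $r_{M/e}(Y) = r_M(Y\cup e) - r_M(e)$ so that the constant offsets cancel across the $2n-3$ terms on each side. The only cosmetic difference is that the paper argues by contradiction and explicitly assumes $e$ is not a loop (so that $r_M(e)=1$), whereas your version keeps $r_M(e)$ symbolic and therefore handles the loop case without comment — a marginally cleaner bookkeeping, but not a different argument.
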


\begin{proof}
Take some $M\in \mathcal{K}_n$ and $e\in E(M)$ such that $M/e\notin \mathcal{K}_n$. Assume $e$ is not a loop. Assume \vect{X}{n} is a bad family in $M/e$ and let ${X_i}'=X_i\cup e$ for all $i$. Recall that $r_{M/x}(X)=r_M(X\cup x)-r_M(x)$. Thus $r_{M/e}(X_i)=r_{M}(X_i\cup e)-r_M(e)$. When $e$ is not a loop, we have that $r_M({X_i}')=r_{M/e}(X_i)+1$ for all $i$, and $r_M({X_i}'\cup {X_j}'\cup {X_k}')=r_{M/e}(X_i\cup X_j\cup X_k)+1$. Now evaluate inequality $n$ for $X_1',\ldots,X_n'$ in $M$.
\begin{multline*}\sum_{i=3}^n r_M(X_i')+r_M(X_1'\cup X_2')+r_M(X_1'\cup X_3'\cup X_n')+ \sum_{i=4}^n r_M(X_2'\cup X_{i-1}'\cup X_i')\\
\leq r_M(X_1'\cup X_3')+r_M(X_1'\cup X_n')+\sum_{i=3}^n r_M(X_2'\cup X_i')+ 
\sum_{i=4}^n r_M(X_{i-1}'\cup X_i')\end{multline*}
Using the rank equalities calculated above, this is equivalent to
\begin{multline*}\sum_{i=3}^n (r_{M/x}(X_i)+1)+r_{M/x}(X_1\cup X_2)+1\\
+r_{M/x}(X_1\cup X_3\cup X_n)+1+ \sum_{i=4}^n (r_{M/x}(X_2\cup X_{i-1}\cup X_i)+1)\\
\leq r_{M/x}(X_1\cup X_3)+1+r_{M/x}(X_1\cup X_n)+1\\
+\sum_{i=3}^n (r_{M/x}(X_2\cup X_i)+1)+ 
\sum_{i=4}^n (r_{M/x}(X_{i-1}\cup X_i)+1)\end{multline*}
All the constant terms cancel out, leaving inequality $n$ as applied to \vect{X}{n} in $M/e$, contradicting \vect{X}{n} being a bad family in $M/e$. Thus there is no $e$ such that $M/e\notin\mathcal{K}_n$.

Now consider $M\backslash e$. Assume that $M\backslash e$ has a bad family \vect{X}{n}. These subsets are also subsets of $M$ and their rank is unchanged in $M$, so they must form a bad family in $M$ as well, contradicting $M\in\mathcal{K}_n$. 
\end{proof}

\begin{lemma}
\label{indp}
Suppose \vect{X}{n} is a bad family for Kinser inequality $n$. We can assume that each set $X_i$ is independent.
\end{lemma}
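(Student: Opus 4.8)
The plan is to replace each $X_i$ by a maximal independent subset of itself and to check that this changes none of the ranks occurring in Kinser inequality $n$. So suppose \vect{X}{n} is a bad family relative to $n$, and for each $i$ choose a maximal independent subset $B_i\subseteq X_i$. Each $B_i$ is independent, so it will suffice to show that \vect{B}{n} is again a bad family.

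First I would record that $cl(B_i)=cl(X_i)$ for every $i$. One inclusion is immediate from CL2, since $B_i\subseteq X_i$. For the other, if $x\in X_i-B_i$ then $B_i\cup x$ is dependent by maximality of $B_i$, so $r(B_i\cup x)=r(B_i)$ and hence $x\in cl(B_i)$; thus $X_i\subseteq cl(B_i)$, and applying $cl$ and using CL2 and CL3 gives $cl(X_i)\subseteq cl(B_i)$.

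Next I would invoke the elementary identity $cl(\bigcup_k A_k)=cl(\bigcup_k cl(A_k))$, valid for any finite family of sets and following directly from CL1, CL2 and CL3, together with the standard fact that a set and its closure have the same rank. Every term appearing in Definition \ref{inequality} is the rank of a union of one, two, or three of the sets $X_i$; applying the identity to such a union and using $cl(B_i)=cl(X_i)$ shows that it has the same closure, and hence the same rank, as the corresponding union of the $B_i$. Therefore each side of inequality $n$ takes the same value on \vect{X}{n} as on \vect{B}{n}, so \vect{B}{n} fails inequality $n$ and is a bad family of independent sets, as required.

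I do not expect a real obstacle here. The only steps needing attention are the bookkeeping observation that every term of the inequality is the rank of a union of at most three of the $X_i$ (so that the closure argument applies uniformly to all of them) and the two closure facts above, which are immediate from Lemma \ref{closureaxioms}.
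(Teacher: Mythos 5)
Your proposal is correct and follows essentially the same route as the paper: replace each $X_i$ by a maximal independent subset (a basis), observe that the closure is unchanged, and conclude that every rank term in the inequality — each being the rank of a union of at most three of the sets — is unaffected. The paper verifies this directly by showing $X_j\cup X_k\subseteq cl(I_j\cup I_k)$ and invoking monotonicity, while you package the same fact as the general identity $cl(\bigcup_k A_k)=cl(\bigcup_k cl(A_k))$, but the underlying argument is the same.
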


\begin{proof}
For all $i\in\{1,\ldots,n\}$, let $I_k\subseteq X_j$ be a basis of $X_j$. We will show that we can replace each set $X_j$ with its basis $I_j$. We have that $r(X_j)=r(I_j)$. Now consider $r(X_j\cup X_k)$. Clearly $r(I_j\cup I_k)\leq r(X_j\cup X_k)$. If $x\in X_j\cup X_k$, then $x$ is either in $X_j$ or it is in $X_j$, so $x\in cl_M(I_j)$ or $x\in cl_M(I_k)$. In either case, $x\in cl_M(I_j\cup I_k)$, so $X_j\cup X_k\subseteq cl_M(I_j\cup I_k)$. Thus
\begin{align*}
r(X_j\cup X_k) &\leq r(cl_M(I_j\cup I_k)) \\
&= r(I_j\cup I_k)
\end{align*}

Thus $r(I_j\cup I_k)\leq r(X_j\cup X_k)\leq r(I_j\cup I_k)$, so $r(X_j\cup X_k)=r(I_j\cup I_k)$. Similarly, $r(X_j\cup X_k\cup X_l)=r(I_j\cup I_k\cup I_l)$. Thus \vect{I}{n} is a bad family for Kinser inequality $n$.
\end{proof}

\begin{lemma}
\label{flats}
Suppose \vect{X}{n} is a bad family for Kinser inequality $n$. We can assume that each set $X_i$ is a flat.
\end{lemma}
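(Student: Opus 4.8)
The plan is to mimic the proof of Lemma \ref{indp}, but replacing each bad-family set by its closure rather than by a basis. Given a bad family $X_1,\dots,X_n$ for Kinser inequality $n$, set $Y_i = cl_M(X_i)$ for each $i$; I claim $Y_1,\dots,Y_n$ is again a bad family, which suffices since each $Y_i$ is a flat by CL3. The only thing to check is that passing from the $X_i$ to the $Y_i$ leaves every rank term appearing in inequality $n$ unchanged — that is, $r(X_{i_1} \cup \dots \cup X_{i_k}) = r(Y_{i_1} \cup \dots \cup Y_{i_k})$ for the singletons, pairs, and triples that actually occur.

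First I would record the elementary fact that $r(X \cup Y) = r(cl(X) \cup cl(Y))$ for any sets $X, Y \subseteq E$, and similarly for unions of three sets. One inclusion is immediate from R2, since $X \cup Y \subseteq cl(X) \cup cl(Y)$. For the reverse, note $cl(X) \cup cl(Y) \subseteq cl(X \cup Y)$ (by CL2, as $X, Y \subseteq X\cup Y$, and then taking unions), so by R2 and the idempotence of rank under closure — i.e. $r(cl(Z)) = r(Z)$, which follows directly from the definition of $cl$ — we get $r(cl(X) \cup cl(Y)) \le r(cl(X \cup Y)) = r(X \cup Y)$. The same argument with three sets gives $r(cl(X) \cup cl(Y) \cup cl(Z)) = r(X \cup Y \cup Z)$. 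In fact this is essentially the computation already carried out inside the proof of Lemma \ref{indp}, just phrased in terms of closures.

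Applying this term by term: every singleton term $r(X_i)$ becomes $r(Y_i) = r(cl(X_i)) = r(X_i)$; every pairwise term $r(X_i \cup X_j)$ on either side becomes $r(Y_i \cup Y_j) = r(X_i \cup X_j)$; and every triple term $r(X_1 \cup X_3 \cup X_n)$ and $r(X_2 \cup X_{i-1} \cup X_i)$ becomes the corresponding expression in the $Y$'s with the same value. Hence inequality $n$ evaluated on $Y_1,\dots,Y_n$ is literally the same numerical inequality as on $X_1,\dots,X_n$, so it fails, and $Y_1,\dots,Y_n$ is a bad family of flats.

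I do not anticipate a genuine obstacle here; the content is entirely the closure identity for rank, and the proof is a short variation on Lemma \ref{indp}. The one point requiring a line of care is that the identity must be invoked for the three-set unions as well as the two-set ones, and that $r(cl(Z)) = r(Z)$ is used silently — both are routine from the axioms CL1–CL2 and the definition of closure. One could alternatively note that Lemma \ref{indp} already lets us assume each $X_i$ is independent, and then observe that $cl$ of an independent set still has a rank-preserving presentation; but proving the closure identity directly is cleaner and self-contained.
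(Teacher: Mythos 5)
Your proof is correct and takes essentially the same route as the paper's: replace each $X_i$ by its closure, then verify that every rank term in the inequality is preserved via the two-sided inclusion argument ($X_i\cup X_j\subseteq cl(X_i)\cup cl(X_j)\subseteq cl(X_i\cup X_j)$ together with $r(cl(Z))=r(Z)$). The paper does exactly this, so there is nothing to flag.
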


\begin{proof}
Recall that a flat is a set $X$ such that $cl(X)=X$. We simply replace each $X_i$ with $cl(X_i)$. First note that $r(cl(X_i))=r(X_i)$. Now consider $r(X_i\cup X_j)=r(cl(X_i\cup X_j))$. We need to show that this is equal to $r(cl(X_i)\cup cl(X_j))$. As $X_i\subseteq cl(X_i)$, we must have that $$r(X_i\cup X_j)\leq r(cl(X_i)\cup cl(X_j))$$ Now note that if $e\in cl(X_i)$, then $e\in cl(X_i\cup X_j)$. This implies that $cl(X_i)\subseteq cl(X_i\cup X_j)$ Likewise, every element in the closure of $X_j$ is also in the closure of $X_i\cup X_j$, so $cl(X_j)\subseteq cl(X_i\cup X_j)$. We thus have that $cl(X_i)\cup cl(X_j)\subseteq cl(X_i\cup X_j)$, so
\begin{align*}
r(cl(X_i)\cup cl(X_j)) &\leq r(cl(X_i\cup X_j))\\
&= r(X_i\cup X_j)
\end{align*}

We thus have that
$$r(cl(X_i)\cup cl(X_j))\leq r(X_i\cup X_j) \leq r(cl(X_i)\cup cl(X_j))$$
so $r(X_i\cup X_j)=r(cl(X_i)\cup cl(X_j))$, and so $cl(X_i),\ldots,cl(X_n)$ is a bad family for inequality $n$ as well.
\end{proof}

\section{Sums}

\begin{dfn}
\label{direct sum}
Let $M=(E,r)$ and $M'=(E',r')$ where $E\cap E'=\varnothing$. The \emph{direct sum} of these matroids is denoted by $M\oplus M'$, and has ground set $E\cup E'$ and rank of $X\subseteq E\cup E'$ equal to $r(X\cap E)+r'(X\cap E')$.
\end{dfn}

\begin{lemma} 
\label{ds closed}
$\mathcal{K}_n$ is closed under direct sum for all $n$.
\end{lemma}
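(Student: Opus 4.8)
The plan is to exploit the fact that the rank function of a direct sum is additive, together with the observation that every term appearing in Kinser inequality $n$ is the rank of a union of at most three of the sets $X_1, \ldots, X_n$. Thus the Kinser inequality for $M \oplus M'$ will simply be the sum of the Kinser inequalities for $M$ and for $M'$, and closure follows immediately.

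Concretely, let $M = (E, r)$ and $M' = (E', r')$ with $E \cap E' = \varnothing$ both lie in $\mathcal{K}_n$, and let $X_1, \ldots, X_n$ be arbitrary subsets of $E \cup E'$. For each $i$ put $Y_i = X_i \cap E$ and $Z_i = X_i \cap E'$. First I would record the additivity identity: for any $S \subseteq \{1, \ldots, n\}$, Definition \ref{direct sum} gives
\[
r_{M \oplus M'}\Bigl(\bigcup_{i \in S} X_i\Bigr) \;=\; r\Bigl(\bigcup_{i \in S} Y_i\Bigr) + r'\Bigl(\bigcup_{i \in S} Z_i\Bigr),
\]
because $\bigl(\bigcup_{i \in S} X_i\bigr) \cap E = \bigcup_{i \in S} Y_i$ and similarly with $E'$. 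Inspecting Definition \ref{inequality}, every summand on either side of Kinser inequality $n$ has exactly this form: a singleton $X_i$, one of the pairs $X_1 \cup X_2$, $X_1 \cup X_3$, $X_1 \cup X_n$, $X_2 \cup X_i$, $X_{i-1} \cup X_i$, or one of the triples $X_1 \cup X_3 \cup X_n$, $X_2 \cup X_{i-1} \cup X_i$.

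Next I would substitute this identity into each term of Kinser inequality $n$ for $M \oplus M'$ evaluated at $X_1, \ldots, X_n$ and regroup. Writing $\mathrm{L}(N; W_\bullet)$ and $\mathrm{R}(N; W_\bullet)$ for the left- and right-hand sides of Kinser inequality $n$ applied to sets $W_1, \ldots, W_n$ of a matroid $N$, the term-by-term substitution gives
\[
\mathrm{L}(M \oplus M'; X_\bullet) = \mathrm{L}(M; Y_\bullet) + \mathrm{L}(M'; Z_\bullet), \qquad \mathrm{R}(M \oplus M'; X_\bullet) = \mathrm{R}(M; Y_\bullet) + \mathrm{R}(M'; Z_\bullet).
\]
Since $M \in \mathcal{K}_n$ we have $\mathrm{L}(M; Y_\bullet) \le \mathrm{R}(M; Y_\bullet)$, and since $M' \in \mathcal{K}_n$ we have $\mathrm{L}(M'; Z_\bullet) \le \mathrm{R}(M'; Z_\bullet)$; adding these two inequalities yields $\mathrm{L}(M \oplus M'; X_\bullet) \le \mathrm{R}(M \oplus M'; X_\bullet)$. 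As $X_1, \ldots, X_n$ was arbitrary, $M \oplus M' \in \mathcal{K}_n$, and induction on the number of summands extends the conclusion to any finite direct sum.

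There is no genuinely hard step here; the only care required is the bookkeeping in the term-by-term substitution, namely confirming that literally every term on both sides of the inequality is the rank of a union of at most three of the $X_i$, so that the additivity identity applies to each. Equivalently, one may observe that any fixed integer combination of ranks of unions of subfamilies of $X_1, \ldots, X_n$ is additive over direct sums, and that Kinser inequality $n$ asserts precisely that one such combination — the right-hand side minus the left-hand side — is nonnegative.
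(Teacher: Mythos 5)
Your proposal is correct and follows essentially the same argument as the paper: both decompose $r_{M\oplus M'}$ additively across the two ground sets, observe that this splits Kinser inequality $n$ for $M\oplus M'$ into the sum of the corresponding inequalities for $M$ and $M'$, and conclude by adding. Your version is simply cleaner bookkeeping (introducing $Y_i$, $Z_i$) where the paper writes out every intersected term explicitly.
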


\begin{proof}
Take two matroid $M=(E,r)$ and $M'=(E',r')$ which are contained in $\mathcal{K}_n$. Take the direct sum $M\oplus M'$. We wish to show that for any family \vect{X}{n} of $E\cup E'$ the following inequality holds:
\begin{align*}
\sum_{i=3}^n r_{M\oplus M'}(X_i)+r_{M\oplus M'}(X_1\cup X_2)+r_{M\oplus M'}(X_1\cup X_3\cup X_n)+\sum_{i=4}^n r_{M\oplus M'}(X_2\cup X_{i-1}\cup X_i) \\
\leq r_{M\oplus M'}(X_1\cup X_3)+r_{M\oplus M'}(X_1\cup X_n)+\sum_{i=3}^n r_{M\oplus M'}(X_2\cup X_i)+\sum_{i=4}^n r_{M\oplus M'}(X_{i-1}\cup X_i)
\end{align*}
This is equivalent to
\begin{equation}
\begin{split}
& \sum_{i=3}^n r(X_i\cap E)+r((X_1\cup X_2)\cap E)+r((X_1\cup X_3\cup X_n)\cap E)\\
&\qquad+\sum_{i=4}^n r((X_2\cup X_{i-1}\cup X_i)\cap E) + \sum_{i=3}^n r'(X_i\cap E')+r'((X_1\cup X_2)\cap E')\\
&\qquad+r'((X_1\cup X_3\cup X_n)\cap E')+\sum_{i=4}^n r'((X_2\cup X_{i-1}\cup X_i)\cap E')\\
\leq & \ r((X_1\cup X_3)\cap E)+r((X_1\cup X_n)\cap E)+\sum_{i=3}^n r((X_2\cup X_i)\cap E)\\
&\qquad+\sum_{i=4}^n r((X_{i-1}\cup X_i)\cap E)+r'((X_1\cup X_3)\cap E')+r'((X_1\cup X_n)\cap E')\\
&\qquad+\sum_{i=3}^n r'((X_2\cup X_i)\cap E')+\sum_{i=4}^n r'((X_{i-1}\cup X_i)\cap E')
\end{split}
\end{equation}
As $M\in\mathcal{K}_n$, we have that
\begin{equation*}
\begin{split}
& \sum_{i=3}^n r(X_i\cap E)+r((X_1\cup X_2)\cap E)\\
& \qquad+r((X_1\cup X_3\cup X_n)\cap E)+\sum_{i=4}^n r((X_2\cup X_{i-1}\cup X_i)\cap E) \\ 
& \leq r((X_1\cup X_3)\cap E)+r((X_1\cup X_n)\cap E)\\
&\qquad+\sum_{i=3}^n r((X_2\cup X_i)\cap E)+\sum_{i=4}^n r((X_{i-1}\cup X_i)\cap E)
\end{split}
\end{equation*}
As $M'\in\mathcal{K}_n$, we have that 
\begin{equation*}
\begin{split}
& r'(X_i\cap E')+r'((X_1\cup X_2)\cap E')\\
& \qquad+r'((X_1\cup X_3\cup X_n)\cap E')+\sum_{i=4}^n r'((X_2\cup X_{i-1}\cup X_i)\cap E') \\
& \leq r'((X_1\cup X_3)\cap E')+r'((X_1\cup X_n)\cap E')\\
&\qquad+\sum_{i=3}^n r'((X_2\cup X_i)\cap E')+\sum_{i=4}^n r'((X_{i-1}\cup X_i)\cap E')
\end{split}
\end{equation*}
The values of the terms on the left of inequality (3.0.21.1) are thus bounded by the terms on the right-hand side, and so the inequality holds. 
\end{proof}

\chapter{Kinser Hierarchy}

In this chapter we will investigate how the Kinser classes interact with each other. We will first show that representable matroids are properly contained inside every Kinser class. Next we will show that the classes form a descending chain, and show that the relaxed Kinser matroid of rank $n$ is contained inside $\mathcal{K}_{n-1}$ but not $\mathcal{K}_{n}$. Next we will consider the issue of duality, and prove that the class of matroids which satisfy Kinser inequality $4$ is dual closed. The class of matroids which satisfy Kinser inequality $5$ is, in contrast, not dual closed. The proof of this is given in the next chapter.

\begin{lemma}
\label{containment}
The class of representable matroids is properly contained in $\mathcal{K}_{\infty}$
\end{lemma}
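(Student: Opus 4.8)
The statement has two halves: first, that every representable matroid lies in $\mathcal{K}_\infty$, and second, that this containment is \emph{proper}. The first half is already done for us by Lemma~\ref{rep}, which shows a representable matroid satisfies every Kinser inequality; since $\mathcal{K}_\infty = \bigcap_{i \geq 4} \mathcal{K}_i$ by definition, a representable matroid lies in every $\mathcal{K}_i$ and hence in $\mathcal{K}_\infty$. So the only work is to exhibit a non-representable matroid that nevertheless belongs to $\mathcal{K}_\infty$, i.e.\ one that satisfies all Kinser inequalities but is not representable over any field.

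The plan is to use the direct sum $F_7 \oplus F_7^-$ of the Fano and non-Fano matroids, which the introduction already flags as the intended witness. First I would recall the standard fact that $F_7$ is representable only in characteristic $2$ while $F_7^-$ is representable only in characteristics $\neq 2$ (this is in \cite{Oxley}); since a direct sum is representable over a field $\mathbb{K}$ exactly when both summands are, $F_7 \oplus F_7^-$ is representable over no field, and in particular is not representable. Next I would show $F_7 \oplus F_7^- \in \mathcal{K}_\infty$. Both $F_7$ and $F_7^-$ are representable (over $GF(2)$ and $GF(3)$ respectively), hence by Lemma~\ref{rep} each lies in $\mathcal{K}_n$ for every $n \geq 4$, hence each lies in $\mathcal{K}_\infty$. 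Then by Lemma~\ref{ds closed}, $\mathcal{K}_n$ is closed under direct sum for every $n$, so $F_7 \oplus F_7^- \in \mathcal{K}_n$ for all $n \geq 4$, and therefore $F_7 \oplus F_7^- \in \mathcal{K}_\infty$. Combining: $F_7 \oplus F_7^-$ is in $\mathcal{K}_\infty$ but not representable, so the containment is strict.

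The main obstacle, such as it is, is entirely in the representability bookkeeping for the direct sum: one needs the two facts that (a) $M \oplus M'$ is $\mathbb{K}$-representable iff both $M$ and $M'$ are, and (b) the characteristic sets of $F_7$ and $F_7^-$ are disjoint. Both are classical and can be cited; no genuine difficulty arises. One stylistic point: since Lemma~\ref{ds closed} and Lemma~\ref{rep} do the heavy lifting, the proof is essentially a two-line assembly, and the cleanest write-up simply names the witness, invokes the two lemmas for membership in $\mathcal{K}_\infty$, and cites \cite{Oxley} for non-representability of the direct sum.
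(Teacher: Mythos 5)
Your proposal is correct and follows the same route as the paper: invoke Lemma~\ref{rep} for the containment, then use $F_7 \oplus F_7^-$ as the witness for properness, citing \cite{Oxley} for the disjoint characteristic sets and Lemma~\ref{ds closed} for closure of $\mathcal{K}_\infty$ under direct sums. The only cosmetic difference is that the paper first states that $\mathcal{K}_\infty$ is direct-sum closed as a standalone observation before applying it, whereas you fold that step directly into the argument; the content is identical.
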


\begin{proof}
Note that as a consequence of Lemma \ref{rep}, the class of representable matroids are contained inside every Kinser class, and so is a subset of $\mathcal{K}_\infty$. We will now show the class of representable matroids forms a proper subset of $\mathcal{K}_\infty$. Recall that $\mathcal{K}_n$ is closed under direct sum for all $n$. Take two matroids in $\mathcal{K}_\infty$. As these matroid is in the intersection of every Kinser class, their direct sum is also contained in every Kinser class, and thus inside $\mathcal{K}_\infty$. Thus $\mathcal{K}_\infty$ is also closed under direct sums.

Define $F_7$ to be the matroid represented over GF(2) by
$$\bordermatrix{\text{} \cr
& 1 & 0 & 0 & 1 & 1 & 0 & 1 \cr
& 0 & 1 & 0 & 1 & 0 & 1 & 1 \cr
& 0 & 0 & 1 & 0 & 1 & 1 & 1 } $$ 
Define $F_7^-$ to be the matroid represented by the same matrix, but over GF(3). By \cite[Proposition 6.4.8]{Oxley}, $F_7$ can be represented over a field only if it has characteristic $2$, while $F_7^-$ can be represented over a field only if it has characteristic different from $2$. Therefore $F_7\oplus F_7^-$ is not representable. However, it is contained in $K_\infty$, since $F_7$ and $F_7^-$ are both representable, and hence in $\mathcal{K}_{\infty}$. 
\end{proof}

\begin{lemma}
\label{supseteq}
$\mathcal{K}_n\supseteq\mathcal{K}_{n+1}$.
\end{lemma}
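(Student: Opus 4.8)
The plan is to show $\mathcal{K}_n \supseteq \mathcal{K}_{n+1}$ by a "padding" argument: given a matroid $M \in \mathcal{K}_{n+1}$ and an arbitrary family $\vect{X}{n}$ of subsets of $E(M)$ witnessing (potentially) a failure of Kinser inequality $n$, I would build a family $\vect{Y}{n+1}$ of subsets of the same ground set, by repeating one of the sets, so that Kinser inequality $n+1$ applied to the $Y_i$ collapses term-by-term into Kinser inequality $n$ applied to the $X_i$. Since $M$ satisfies inequality $n+1$ for every family, it then satisfies inequality $n$ for the family $\vect{X}{n}$, and as that family was arbitrary, $M \in \mathcal{K}_n$.

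The natural choice is to set $Y_i = X_i$ for $i \in \{1,\dots,n-1\}$ and $Y_n = Y_{n+1} = X_n$ — that is, duplicate the last set. (One could alternatively duplicate $X_2$ or insert a copy of some $X_i$ into an interior position; the right choice is dictated by making the indexing in the sums $\sum_{i=4}^{n+1}$ on both sides line up.) I would then write out inequality $n+1$ for $\vect{Y}{n+1}$ explicitly and substitute. On the left: $\sum_{i=3}^{n+1} r(Y_i) = \sum_{i=3}^{n} r(X_i) + r(X_n)$; the term $r(Y_1 \cup Y_3 \cup Y_{n+1}) = r(X_1 \cup X_3 \cup X_n)$; the term $r(Y_1 \cup Y_2) = r(X_1 \cup X_2)$; and in $\sum_{i=4}^{n+1} r(Y_2 \cup Y_{i-1} \cup Y_i)$ the $i=n+1$ summand is $r(X_2 \cup X_n \cup X_n) = r(X_2 \cup X_n)$. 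On the right: $r(Y_1 \cup Y_{n+1}) = r(X_1 \cup X_n)$; $\sum_{i=3}^{n+1} r(Y_2 \cup Y_i) = \sum_{i=3}^{n} r(X_2 \cup X_i) + r(X_2 \cup X_n)$; and in $\sum_{i=4}^{n+1} r(Y_{i-1} \cup Y_i)$ the $i=n+1$ summand is $r(X_n \cup X_n) = r(X_n)$. Now I would cancel: the extra $r(X_n)$ on the left (from the singleton sum) against the extra $r(X_n)$ on the right (from the adjacency sum), and the extra $r(X_2 \cup X_n)$ appearing on the left (from the triple sum) against the extra $r(X_2 \cup X_n)$ on the right (from the $r(Y_2 \cup Y_i)$ sum). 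What remains is exactly Kinser inequality $n$ for $\vect{X}{n}$.

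The only real thing to be careful about — and I expect this bookkeeping, rather than any conceptual difficulty, to be the main obstacle — is verifying that after the substitution the leftover terms on the two sides genuinely match and cancel, with no residual slack in the wrong direction, and in particular handling the boundary indices correctly (e.g. confirming the $i=4$ term of the triple sum behaves as expected when $n$ is small, and that duplicating $X_n$ does not accidentally merge with the $X_1 \cup X_3 \cup X_n$ term or the $X_1 \cup X_n$ term in an unintended way). I would double-check the term counts: inequality $n$ has $2n-3$ terms per side and inequality $n+1$ has $2n-1$ per side, so exactly two terms per side must cancel, consistent with the two cancellations identified above. Once the cancellation is confirmed, the conclusion is immediate.
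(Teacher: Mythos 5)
Your proposal is correct and is essentially the same argument as the paper's: set $X_{n+1}=X_n$, expand inequality $n+1$, and observe that the two extra terms $r(X_n)$ and $r(X_2\cup X_n)$ appear on both sides and cancel, leaving inequality $n$. The cancellations you identify match the paper's exactly.
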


\begin{proof}
Assume inequality $n+1$ holds for the matroid $M$. Let $X_1,\ldots,X_n$ be arbitrary subsets of $E(M)$. We show inequality $n$ holds for \vect{X}{n}. Let $X_{n+1}=X_n$. We have that
\begin{align*}\sum_{i=3}^{n+1} r(X_i)+r(X_1\cup X_2)+r(X_1\cup X_3\cup X_{n+1})+ \sum_{i=4}^{n+1} r(X_2\cup X_{i-1}\cup X_i) \\
\leq r(X_1\cup X_3)+r(X_1\cup X_{n+1})+\sum_{i=3}^{n+1} r(X_2\cup X_i)+ 
\sum_{i=4}^{n+1} r(X_{i-1}\cup X_i)\end{align*}
Bringing out the last term of each sum, 
\begin{equation*}
\begin{split}
& \sum_{i=3}^n r(X_i)+r(X_{n+1})+r(X_1\cup X_2)+r(X_1\cup X_3\cup X_n)\\
&\qquad+ \sum_{i=4}^n r(X_2\cup X_{i-1}\cup X_i)+r(X_2\cup X_{n+1})\\
& \leq r(X_1\cup X_3)+r(X_1\cup X_n)+\sum_{i=3}^n r(X_2\cup X_i) \\
&\qquad+r(X_2\cup X_{n+1})+ \sum_{i=4}^n r(X_{i-1}\cup X_i)+r(X_n\cup X_{n+1})
\end{split}
\end{equation*}
Now bringing these terms to the start of each side of the inequality and using $X_{n+1}=X_n$, we have that this is the same as
\begin{equation*} 
\begin{split}
& r(X_n)+r(X_2\cup X_n)+\sum_{i=3}^n r(X_i)+r(X_1\cup X_2)\\
&\qquad +r(X_1\cup X_3\cup X_n)+ \sum_{i=4}^n r(X_2\cup X_{i-1}\cup X_i)\\
& \leq r(X_2\cup X_n)+r(X_n)+r(X_1\cup X_3)+r(X_1\cup X_n)\\
&\qquad+\sum_{i=3}^n r(X_2\cup X_i)+ 
\sum_{i=4}^n r(X_{i-1}\cup X_i)
\end{split}
\end{equation*}
The two terms at the start of each side of the inequality cancel out, leaving inequality $n$:
\begin{align*}\sum_{i=3}^n r(X_i)+r(X_1\cup X_2)+r(X_1\cup X_3\cup X_n)+ \sum_{i=4}^n r(X_2\cup X_{i-1}\cup X_i)\\
\leq r(X_1\cup X_3)+r(X_1\cup X_n)+\sum_{i=3}^n r(X_2\cup X_i)+ 
\sum_{i=4}^n r(X_{i-1}\cup X_i).\end{align*}
\end{proof}

We now have the following diagram of the Kinser hierarchy.

\begin{figure}[ht]
\centering
\includegraphics[scale=0.6]{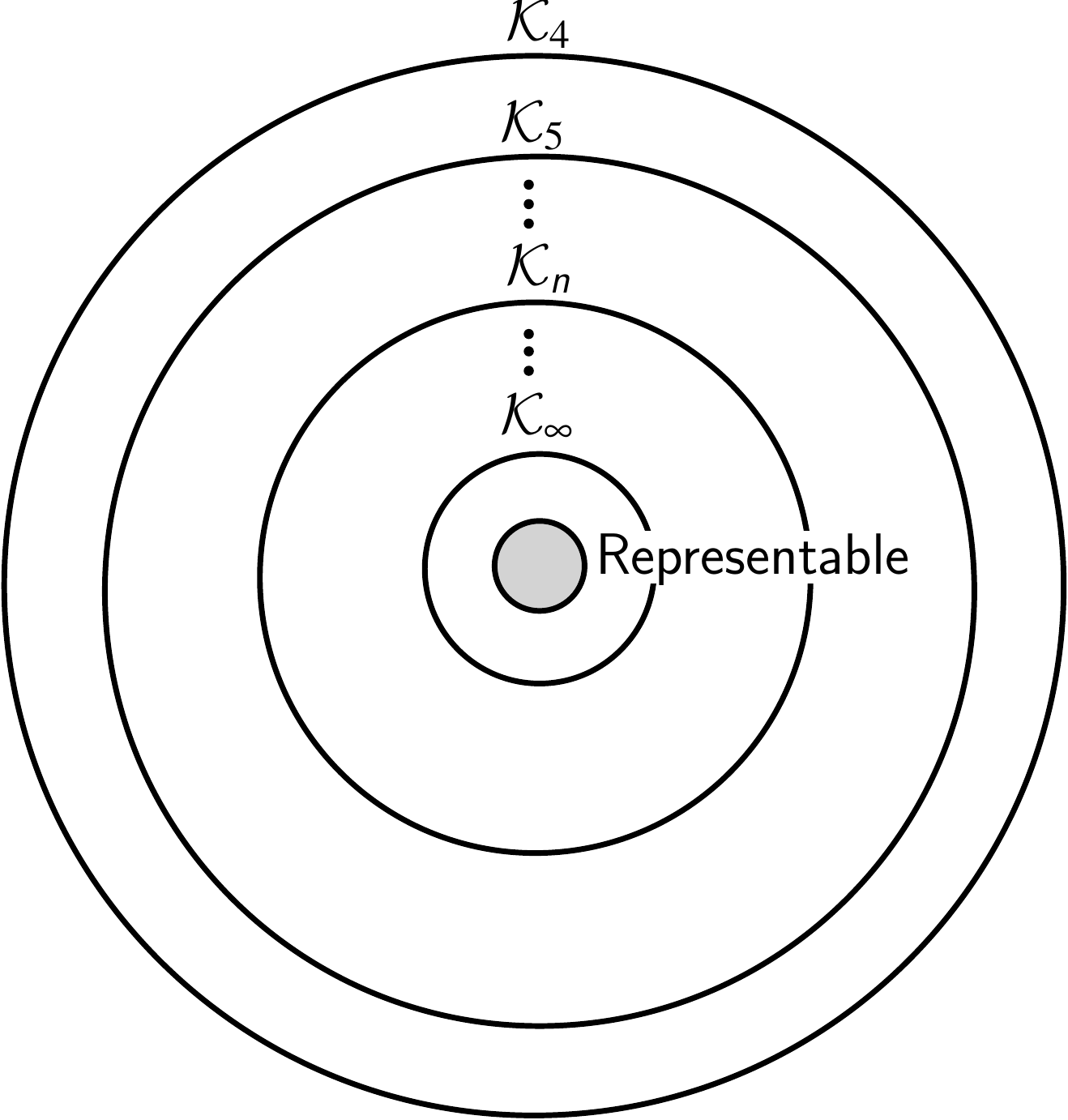}
\caption{Kinser classes}
\end{figure}

Next, we will give an example of a matroid which lies in the gap between two of these classes.

\begin{lemma}
\label{gaps} For $n\geq 5$, \kin{n}$^-\in\mathcal{K}_{n-1}-\mathcal{K}_n$.
\end{lemma}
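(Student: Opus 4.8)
The plan is to establish the two memberships separately, using the results quoted from \cite{missingaxiom}. The easier half is $\kin{n}^-\notin\mathcal{K}_n$: this is exactly Lemma \ref{kin-}, so nothing remains except to point out that the relevant bad family, relative to $n$, is the natural one $V_1,\ldots,V_n$ coming from the construction of $\kin{n}$; I would record the ranks of the sets $V_i$, the pairwise unions $V_i\cup V_j$, and the triples appearing in inequality $n$, both before and after the circuit-hyperplane $V_1\cup V_2$ is relaxed, and check that relaxing $V_1\cup V_2$ raises exactly the term $r(V_1\cup V_2)$ on the left-hand side (and nothing on the right), so the inequality, which held with equality in $\kin{n}$, fails by $1$ in $\kin{n}^-$. (Since Lemma \ref{kin-} is quoted, I may simply cite it, but I would still want the explicit bad family in hand for the second half.)

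The substantive half is $\kin{n}^-\in\mathcal{K}_{n-1}$, i.e.\ $\kin{n}^-$ satisfies every instance of Kinser inequality $n-1$. By Lemma \ref{minors}, $\mathcal{K}_{n-1}$ is minor-closed, and by Lemma \ref{relax2} the relaxation $\kin{n}^-$ is very close, minor-wise, to $\kin{n}$ and to various of its minors; the idea is to reduce a putative bad family for inequality $n-1$ in $\kin{n}^-$ to one living either in a representable minor or in $\kin{n-1}^-$ (or a relaxation thereof), and then invoke Lemma \ref{kin=}/Lemma \ref{rep} for the representable case and induction on $n$ for the remaining case. More concretely: by Lemmas \ref{indp} and \ref{flats} I may assume the bad family $X_1,\ldots,X_{n-1}$ consists of flats of $\kin{n}^-$. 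The flats of $\kin{n}^-$ differ from those of $\kin{n}$ only in those containing $V_1\cup V_2$; I would argue that if the bad family uses no flat separating $V_1\cup V_2$, it is a bad family in $\kin{n}$, contradicting that $\kin{n}$ is representable (hence in $\mathcal{K}_{n-1}\supseteq\mathcal{K}_\infty$ by Lemma \ref{supseteq} applied repeatedly together with Lemma \ref{rep}). In the remaining case the family interacts with the relaxed circuit-hyperplane, and I would contract an element of $V_3\cup\cdots\cup V_n$ outside $V_1\cup V_2$ (or delete an element of $V_1\cup V_2$) and appeal to Lemma \ref{relax2}(i) together with the fact that $\kin{n}/e$ restricted appropriately is either representable or is $\kin{n-1}$ with the analogous circuit-hyperplane $V_1\cup V_2$ relaxed; combined with Lemma \ref{supseteq} and the induction hypothesis $\kin{n-1}^-\in\mathcal{K}_{n-2}\subseteq\mathcal{K}_{n-1}$—wait, one needs $\kin{n-1}^-\in\mathcal{K}_{n-1}$ itself here, so the induction should be phrased as ``$\kin{m}^-\in\mathcal{K}_{m-1}$ for all $4\le m\le n$'' with base case $m=4$ handled by checking that $\kin{4}^-$, the V\'amos matroid, satisfies inequality $3$—that is, every such matroid satisfies the (trivial) rank inequalities indexed by $3$; note Kinser inequalities are only defined for $n\ge 4$, so ``$\mathcal{K}_3$'' should be read as ``all matroids,'' making the base case vacuous and the bottom genuine case $n=5$, where $\kin{5}^-\in\mathcal{K}_4$ must be verified directly, e.g.\ by the minor-based reduction to the V\'amos matroid (representable-adjacent) and explicit checking of the finitely many flat families, or by the argument above with $\kin{4}=\,$V\'amos in $\mathcal{K}_\infty\subseteq\mathcal{K}_4$.

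The main obstacle is the reduction step in the second half: showing that \emph{every} bad family for inequality $n-1$ in $\kin{n}^-$ can be pushed into a minor where we already know the inequality holds. The delicate point is that contracting or deleting an element may destroy the particular flats in the family, so I would need to track, via Lemma \ref{flats}, how closures behave under the minor operation and verify that the reduced family is still bad in the minor; controlling this is where the bulk of the work—and the risk—lies. An alternative, more computational route that sidesteps the induction entirely would be to use the explicit transversal/graph description of $\kin{n}$ (via $M_{r+1}=M[\mathcal{A}]$ and the bipartite graph $G[\mathcal{A}]$) to compute $r$ on all unions of flats appearing in inequality $n-1$ directly, and the graph-of-an-inequality construction introduced after Lemma \ref{rep} to rule out the configurations that could violate it; this is less elegant but more self-contained, and I would fall back to it if the minor reduction proves too slippery.
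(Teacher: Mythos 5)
Your first half ($\kin{n}^-\notin\mathcal{K}_n$) matches the paper: cite Lemma \ref{kin-}, then verify that $V_1,\ldots,V_n$ is the bad family by computing the ranks of the relevant unions and noting that relaxing $V_1\cup V_2$ raises only the left-hand term $r(V_1\cup V_2)$, breaking the equality.

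The second half has a genuine gap, and it is the one you half-see yourself. The induction cannot close: the statement you are proving says $\kin{m}^-\in\mathcal{K}_{m-1}-\mathcal{K}_m$, so the inductive hypothesis at level $n-1$ gives $\kin{n-1}^-\in\mathcal{K}_{n-2}$ and, crucially, $\kin{n-1}^-\notin\mathcal{K}_{n-1}$. You would need the latter containment in the opposite direction, which is exactly false. Since $\mathcal{K}_{n-2}\supseteq\mathcal{K}_{n-1}$ by Lemma \ref{supseteq}, knowing $\kin{n-1}^-\in\mathcal{K}_{n-2}$ buys nothing toward $\mathcal{K}_{n-1}$. The minor-reduction step is also unsubstantiated: $|E(\kin{n})|-|E(\kin{n-1})|=2n-4$, so one contraction or deletion does not get you from $\kin{n}^-$ to anything resembling $\kin{n-1}^-$, and you do not control how the chosen flats behave under that long sequence of operations.

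The paper's actual argument is direct and is essentially a fleshed-out version of your "computational fallback." Assuming a bad family $X_1,\ldots,X_{n-1}$ for inequality $n-1$: relaxing any second circuit-hyperplane $V_2\cup V_j$ ($j\in\{3,\ldots,n\}$) yields the representable $\kin{n}_j^=$ (Lemma \ref{kin=}), and tightening $V_1\cup V_2$ yields representable $\kin{n}$ (Lemma 4.6 of \cite{missingaxiom}). Since relaxing raises exactly one rank by one and tightening lowers exactly one, each of these $n-1$ sets must appear as a term in the inequality, and tracking the direction of the change forces every $V_2\cup V_j$ onto the right-hand side (otherwise the bad inequality would survive into a representable matroid) and $V_1\cup V_2$ onto the left. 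Then one builds the graph $G$ on $\{X_1,\ldots,X_{n-1}\}$ whose edges are the right-hand pair-terms and looks at the subgraph $G'$ induced by the $n-2$ edges assigned to the $V_2\cup V_j$. A disjointness argument (no $X_b$ can be shared between two such edges without emptying $V_j$ from both endpoints of the middle edge) rules out paths and cycles of length three in $G'$; but since some $X_i$ must hold the $V_1$-elements and hence cannot touch any of these edges, $G'$ has at most $n-2$ vertices carrying $n-2$ edges, so it is not a forest and must contain a path or cycle of length three. Contradiction. This avoids induction entirely and is the route you should pursue.
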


\begin{proof}
Take $\kin{n}^-$. We will first show that \vect{V}{n} as in the definition of \kin{n} forms a bad family for inequality $n$ -- that is, 

\begin{align*}
\sum_{i=3}^n r(V_i)+r(V_1\cup V_2)+r(V_1\cup V_3\cup V_n)+\sum_{i=4}^n r(V_2\cup V_{i-1}\cup V_i) \\
\nleq r(V_1\cup V_3)+r(V_1\cup V_n)+\sum_{i=3}^n r(V_2\cup V_i)+\sum_{i=4}^n r(V_{i-1}\cup V_i)
\end{align*}

We sketch the proof of \cite[Proposition 4.5]{missingaxiom}.

Recall that $V_1\cup V_2$ is a relaxed circuit-hyperplane, while $V_2\cup V_i$ is a circuit-hyperplane for all $i\in\{3,\ldots,n\}$. Also, $V_i\cup V_{i+1}$ is a hyperplane for all $i\geq 4$, as is $V_1\cup V_3$, while $V_i\cup V_k$ is spanning for inconsecutive $i,k$. Each $V_i$ is indepedent, while the union of any three $V_i$ is spanning. Substituting these results into inequality $n$ gives
$$
\begin{array}{rrcl}
& \sum_{i=3}^n (n-2)+n+n+\sum_{i=4}^n n & \not\leq & (n-1)+n(n-1)\\
& & & \quad+\sum_{i=3}^n (n-1)+\sum_{i=4}^n (n-1)\\
\Leftrightarrow & (n-2)(n-2)+2n+(n-3)n & \not\leq & (2n-3)(n-1)\\
\Leftrightarrow & 2n^2-5n+4 & \not\leq & 2n^2-5n+3
\end{array}
$$

Thus \kin{n}$^-\notin\mathcal{K}_n$. 

Assume \kin{n}$^-$ has a family $X_1,\ldots, X_{n-1}$ which violates inequality $n-1$. Recall that if we take a second hyperplane of the form $V_2\cup V_j$, for an arbitrary $j\in\{3,\ldots,n\}$ and relax it, this yields a representable matroid, $\kin{n}_j^=$, by Lemma \ref{kin=}. As relaxing a circuit-hyperplane causes that subset to become independent, the rank of $V_2\cup V_j$ increases by one. The rank of all other subsets are unchanged. Suppose $V_2\cup V_j$ was not a term in inequality $n-1$ as applied to \vect{X}{n-1}. Relaxing $V_2\cup V_j$ would then have no effect on the value of the inequality, giving that \vect{X}{n-1} is a bad family in \kin{n}$^=$. This contradicts the matroid being representable. Suppose that $V_2\cup V_j$ is a term on the left-hand side of inequality $n-1$. Then, when we relax $V_2\cup V_j$, the left-hand side of the inquality increases by one while the right-hand side remains the same. As the inequality previously did not hold, i.e. the left-hand side was in fact greater than the right-hand side, it still cannot hold. Thus $V_2\cup V_j$ must be a term on the right-hand side of inequality $n$, for all $j$. There are $n-2$ choices of $j$, hence $n-2$ terms on the right-hand side of inequality $n-1$ must be these circuit-hyperplanes, $V_2\cup V_j$. 

Next, note that if we tighten $V_1\cup V_2$, the resulting matroid is representable, by \cite[Lemma 4.6]{missingaxiom}. This decreases the rank of $V_1\cup V_2$ by one and leaves the ranks of all other subsets unchanged. As with $V_2\cup V_j$, $V_1\cup V_2$ must be a term of inequality $n-1$ in order for the inequality to be able to reflect the change in representability. If $V_1\cup V_2$ was a term on the right-hand side, after tightening $V_1\cup V_2$ the right-hand side would decrease by one and the left-hand side would remain the same. As prior to tightening the left-hand side was greater than the right-hand side, this fact is still true, meaning the inequality does not hold, contradicting the matroid being representable. A term on the left-hand side of inequality $n-1$ must thus be equal to $V_1\cup V_2$.

Now consider a graph $G$ on vertices $V=\{X_1,...,X_{n-1}\}$ with adjacency relation $a$ such that $a(X_i)=\{X_j \ | \ X_i\cup X_j \ \mathrm{is} \ \mathrm{a} \ \mathrm{term} \ \mathrm{on} \ \mathrm{the} \ \text{right-hand side} \ \mathrm{of} \ \mathrm{inequality} \ n-1\}$. Take the subgraph $G'$ induced by the edges of $G$ corresponding to the circuit-hyperplanes $V_2\cup V_3,\ldots,V_2\cup V_n$. Suppose $G'$ has a path of length three. Call this path $X_a,e,X_b,f,X_c,g,X_d$ and let the edges $e,f,g$ refer to the circuit-hyperplanes $V_2\cup V_i$, $V_2\cup V_j$, $V_2\cup V_k$ respectively.
\begin{figure}[ht]
\centering
\begin{tikzpicture}[thick]
\coordinate [label=-90:$X_a$] (Xa) at (0,0);
\coordinate [label=135:$X_b$] (Xb) at (1,1);
\coordinate [label=45:$X_c$] (Xc) at (4,1);
\coordinate [label=-90:$X_d$] (Xd) at (5,0);
\draw (Xa) -- (Xb) -- (Xc) -- (Xd);
\foreach \p in {Xa,Xb,Xc,Xd} \filldraw (\p) circle (3pt);
\node at ($(Xa)!0.5!(Xb) + (-0.8,0.2)$) {$V_2 \cup V_i$};
\node at ($(Xb)!0.5!(Xc) + (0,0.3)$) {$V_2 \cup V_j$};
\node at ($(Xc)!0.5!(Xd) + (0.7,0.2)$) {$V_2 \cup V_k$};
\end{tikzpicture}
\end{figure}\\
Consider what elements must lie where; $X_b\cup X_c$ must be equal to $V_2\cup V_j$. $X_b$ cannot contain any elements of $V_j$ as $X_a\cup X_b=V_2\cup V_i$, and $V_i$ and $V_j$ are disjoint. However, $X_c$ cannot contain any of the elements of $V_j$ either, since $X_c\cup X_d=V_2\cup V_k$ does not. This structure can thus not exist, i.e. $G'$ can have no paths of length three. The same reasoning shows $G'$ cannot have a cycle of length three. We will now show that $G'$ does in fact contain a path or cycle of length three. As $V_1\cup V_2$ must be a term on the left-hand side of inequality $n-1$, one of $X_1,...,X_{n-1}$ must contain elements from $V_1$. This subset cannot be incident with any edge representing a circuit-hyperplane $V_2\cup V_i$, so $G'$ has at most $n-2$ vertices. $G'$ must have $n-2$ edges. As trees must have an edge set of size one less than the number of vertices, this shows $G'$ is not a tree, and thus contains a path or cycle of length three. Hence \kin{n}$^-$ cannot have a bad family for inequality $n-1$.
\end{proof}

\pagebreak

\begin{figure}[ht]
\centering
\includegraphics[scale=0.6]{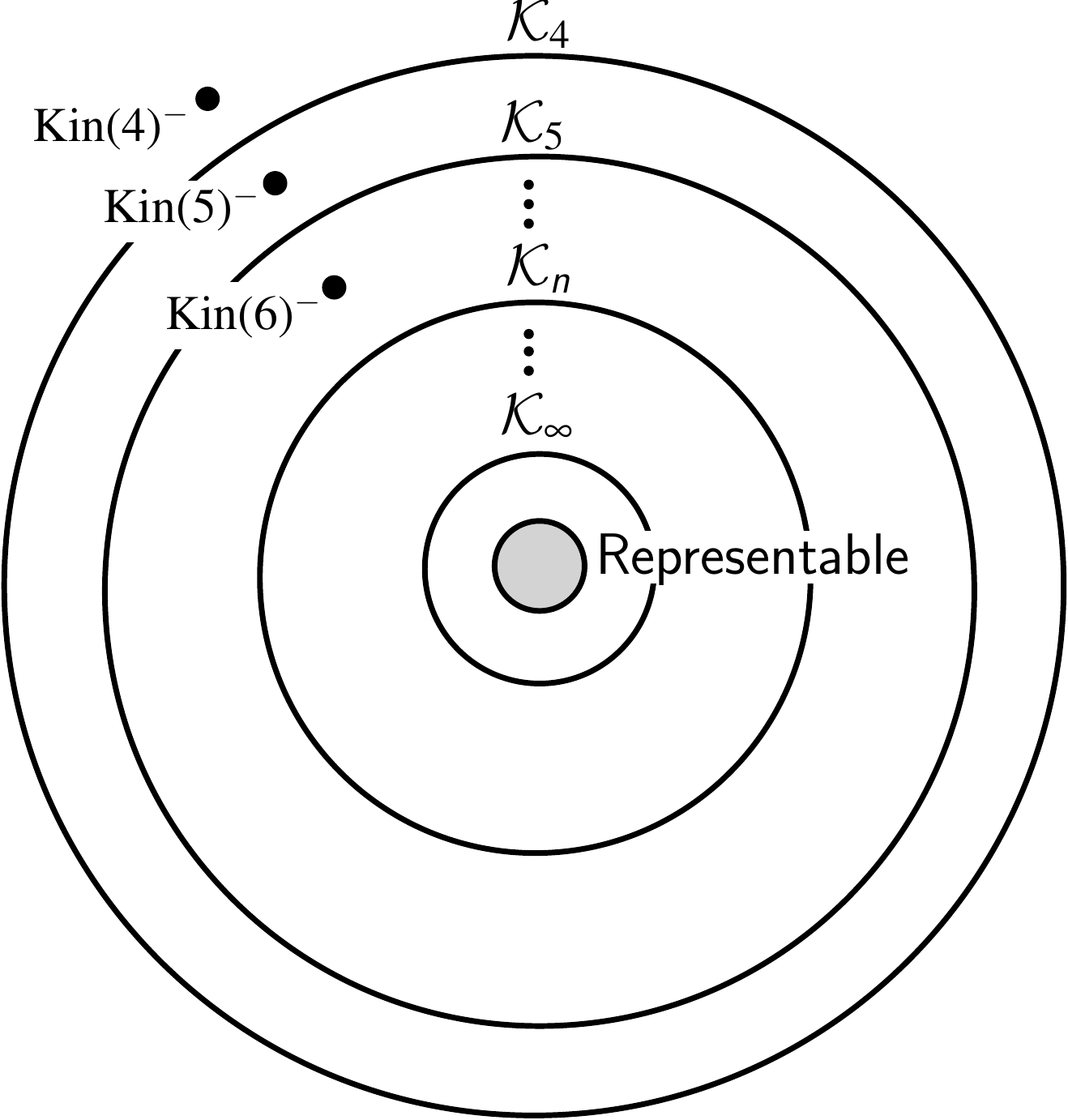}
\caption{Kinser classes (2)}
\end{figure}

Next we show that the first Kinser class, $\mathcal{K}_4$, is dual-closed.

\begin{lemma}
Let $M$ be an excluded minor for the class $\mathcal{K}_4$. Let \vect{X}{4} be a bad family in $M$ for inequality $4$. Then \vect{X}{4} is a partition of $E(M)$.
\end{lemma}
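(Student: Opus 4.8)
The plan is to establish the three defining properties of a partition of $E(M)$: that each $X_i$ is nonempty, that $X_1\cup X_2\cup X_3\cup X_4=E(M)$, and that the $X_i$ are pairwise disjoint. Nonemptiness holds for \emph{any} matroid, by submodularity (Lemma~\ref{rankaxioms}): if $X_1=\varnothing$ then inequality~$4$ for $X_1,\dots,X_4$ reduces to $r(X_2)+r(X_2\cup X_3\cup X_4)\le r(X_2\cup X_3)+r(X_2\cup X_4)$, which holds since $(X_2\cup X_3)\cap(X_2\cup X_4)\supseteq X_2$; if $X_4=\varnothing$ it reduces to $r(X_1\cup X_2)\le r(X_1)+r(X_2)$; and the cases $X_2,X_3=\varnothing$ follow from the symmetries $X_1\leftrightarrow X_2$ and $X_3\leftrightarrow X_4$ of inequality~$4$, so a bad family has no empty member. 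Since $\mathcal{K}_4$ is minor-closed (Lemma~\ref{minors}) and $M$ is an excluded minor, $M\backslash e\in\mathcal{K}_4$ and $M/e\in\mathcal{K}_4$ for every $e\in E(M)$; also $M$ has no loops, since deleting a loop changes neither the rank of any subset nor, therefore, membership in $\mathcal{K}_4$, so a loop would contradict excluded-minor-hood. For covering: if $e$ lies in none of $X_1,\dots,X_4$, then every set and union appearing in inequality~$4$ is a subset of $E(M)-e$ with unchanged rank in $M\backslash e$ (Definition~\ref{delete}), making $X_1,\dots,X_4$ a bad family in $M\backslash e\in\mathcal{K}_4$, a contradiction; so $\bigcup_i X_i=E(M)$.

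The substantive part is pairwise disjointness. Suppose $e\in X_i\cap X_j$ with $i\ne j$; put $Y_k=X_k-e$, so $Y_1,\dots,Y_4\subseteq E(M/e)$. Since $e$ is not a loop, $r_M(e)=1$, and for an index set $T\subseteq\{1,2,3,4\}$ with $X_T:=\bigcup_{k\in T}X_k$ we have $r_{M/e}(\bigcup_{k\in T}Y_k)=r_M(X_T\cup e)-1=r_M(X_T)+\chi(T)-1$, where $\chi(T)\in\{0,1\}$ is $0$ exactly when $e\in cl_M(X_T)$. Plug this into inequality~$4$ evaluated at $Y_1,\dots,Y_4$ in $M/e$; since each side has $2\cdot 4-3=5$ terms the $-1$'s cancel, and the difference (right side minus left side) for $Y_1,\dots,Y_4$ in $M/e$ equals that same difference for $X_1,\dots,X_4$ in $M$, plus $\sum_{T\in\mathcal{R}}\chi(T)-\sum_{T\in\mathcal{L}}\chi(T)$, where $\mathcal{L}=\{\{3\},\{4\},\{1,2\},\{1,3,4\},\{2,3,4\}\}$ and $\mathcal{R}=\{\{1,3\},\{1,4\},\{2,3\},\{2,4\},\{3,4\}\}$ are the index sets of the left- and right-hand terms. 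As $X_1,\dots,X_4$ is bad that first difference is negative, so it suffices to show $\sum_{T\in\mathcal{R}}\chi(T)\le\sum_{T\in\mathcal{L}}\chi(T)$: then $Y_1,\dots,Y_4$ is a bad family in $M/e\in\mathcal{K}_4$, a contradiction, forcing the $X_i$ to be disjoint.

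To prove $\sum_{\mathcal{R}}\chi\le\sum_{\mathcal{L}}\chi$ I would use that $\chi$ is non-increasing (if $X_T\subseteq X_{T'}$ then $cl_M(X_T)\subseteq cl_M(X_{T'})$, so $\chi(T')\le\chi(T)$) and that, writing $S=\{k:e\in X_k\}\supseteq\{i,j\}$, $\chi(T)=0$ whenever $T\cap S\ne\varnothing$. Thus only $T\subseteq\{1,2,3,4\}-S$ (a set of size $\le 2$) can contribute. If $|S|\ge 3$ no member of $\mathcal{R}$ is such a $T$, so $\sum_{\mathcal{R}}\chi=0$. If $|S|=2$, write $\{1,2,3,4\}-S=\{p,q\}$; the only possible contributor in $\mathcal{R}$ is $\{p,q\}$, and the key combinatorial fact is that $\{1,2\}$ is the unique $2$-subset of $\{1,2,3,4\}$ not in $\mathcal{R}$. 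So either $\{p,q\}=\{1,2\}\notin\mathcal{R}$ and $\sum_{\mathcal{R}}\chi=0$, or $\{p,q\}$ contains some $p\in\{3,4\}$, whence $\{p\}\in\mathcal{L}$ and $\chi(\{p,q\})\le\chi(\{p\})\le\sum_{\mathcal{L}}\chi$; in all cases $\sum_{\mathcal{R}}\chi\le\sum_{\mathcal{L}}\chi$. The main obstacle is exactly this accounting: ensuring contraction of a shared element preserves badness, i.e. that no extra $+1$ from a $\chi$-value lands on the right of inequality~$4$ unmatched on the left — ruled out by closure-monotonicity together with the fact that the unique pair absent from the right side, $\{1,2\}$, is precisely the pair disjoint from the two left-hand singletons $X_3$ and $X_4$. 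Covering, the no-loop reduction, and nonemptiness are routine by comparison.
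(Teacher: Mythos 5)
Your proposal is correct and takes essentially the same approach as the paper: delete an uncovered element (using minor-minimality) to show covering, and contract a shared element to show disjointness, arguing that the contraction would preserve the violation of the inequality. Your bookkeeping via the indicator $\chi(T)$ and the inequality $\sum_{\mathcal{R}}\chi\le\sum_{\mathcal{L}}\chi$ is a cleaner, more systematic version of the paper's argument with ``stable'' terms (which shows exactly one stable term on the right and at least one on the left), and the nonemptiness and no-loop observations are additional care the paper does not bother with.
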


\begin{proof}
Suppose there is an element of $E(M)$ which is not contained in some $X_i$. We could then delete this element and \vect{X}{n} would still form a bad family, contradicting $M$ being minor-minimal with respect to not being in $\mathcal{K}_4$. Thus \vect{X}{n} must cover the entire ground set.

Now assume the sets in the bad family are not disjoint, so there exists an $x$ which is in $X_i\cap X_j$ for some $i,j$. Contract $x$, and for each set $X_k$, let $X'_k=X_k-x$. 

Let $L$ be some union of the $X_i$, so that $r(L)$ appears somewhere in the inequality. Say that $L$ is stable if $r_{M/x}(L-x) = r_{M}(L)$. If none of the five terms on the right-hand side of the inequality are stable, then the sum on the right-hand side of the inequality decreases by exactly five when we contract $x$ and remove $x$ from each term. Since the left-hand side can decrease by at most five, this means that $X_1/x,\ldots,X_n/x$ is a bad family in $M/x$, which is impossible as $M$ is an excluded minor for $\mathcal{K}_4$, and so $M/x\in\mathcal{K}_4$. Therefore there is a stable set on the right-hand side.

As $x\in X_i\cap X_j$, the rank of any term that includes $X_i$ or $X_j$ decreases when we contract $x$ and remove it from the term, and so these terms cannot be stable. There is at most one term on the right-hand side of the inequality that does not involve $X_i$ or $X_j$, so the right-hand side has exactly one stable term. Now this stable term is equal to $X_m\cup X_n$, where either $m$ or $n$ is equal to $3$ or $4$. Note $x\notin cl(X_m)$ and $x\notin cl(X_n)$, or else $X_m\cup X_n$ would not be stable. Therefore either $X_3$ or $X_4$ is stable, so there is a stable term on the left-hand side of the inequality also. This means that $X_1/x,\ldots,X_n/x$ is a bad family in $M/x$, and we get another contradiction.
\end{proof}

\begin{lemma}
$ \mathcal{K}_4=\mathcal{K}_4^*$
\end{lemma}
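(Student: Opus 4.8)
The plan is to use the fact that $\mathcal{K}_4$ is minor-closed (Lemma~\ref{minors}), so that membership in $\mathcal{K}_4$ is detected by excluded minors, together with the fact that dualising carries a minor of $M$ to a minor of $M^*$. It then suffices to prove the single implication: \emph{if $N$ is an excluded minor for $\mathcal{K}_4$ then $N^*\notin\mathcal{K}_4$}. Granting this, take any $M\notin\mathcal{K}_4$; since $\mathcal{K}_4$ is minor-closed, $M$ has a minor $N$ that is an excluded minor for $\mathcal{K}_4$, and then $N^*$ is a minor of $M^*$ with $N^*\notin\mathcal{K}_4$, so $M^*\notin\mathcal{K}_4$ as well (again by minor-closedness). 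Thus $M\notin\mathcal{K}_4\Rightarrow M^*\notin\mathcal{K}_4$; applying this to $M^*$ and using $M^{**}=M$ yields the converse, so $M\in\mathcal{K}_4\iff M^*\in\mathcal{K}_4$, which is exactly $\mathcal{K}_4=\mathcal{K}_4^*$.

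So let $N$ be an excluded minor for $\mathcal{K}_4$. Then $N\notin\mathcal{K}_4$, hence $N$ has a bad family $X_1,X_2,X_3,X_4$ for Ingleton's inequality, and by the previous lemma we may take this family to be a partition of $E=E(N)$. Write $r=r_N$ and $\rho=r(N)$, and recall from Definition~\ref{dual} that $r^*(A)=|A|+r(E-A)-\rho$. Because the $X_i$ are pairwise disjoint and cover $E$, the complement $E-A$ of any union $A$ of some of the $X_i$ is precisely the union of the remaining ones, so every corank in sight can be rewritten through $r$; for instance $r^*(X_3\cup X_4)=|X_3|+|X_4|+r(X_1\cup X_2)-\rho$ and $r^*(X_1)=|X_1|+r(X_2\cup X_3\cup X_4)-\rho$.

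The computation I would then run is to write out Ingleton's inequality for $N^*$ applied to the permuted family $(X_3,X_4,X_1,X_2)$ and substitute these expressions. After substitution the cardinality contributions total $3|X_1|+3|X_2|+2|X_3|+2|X_4|$ on each side, and the ``$-\rho$'' contributions total $-5\rho$ on each side, so all of these cancel; what remains is, term for term, exactly Ingleton's inequality for $N$ applied to $(X_1,X_2,X_3,X_4)$. Since that inequality fails, so does the one for $N^*$ on $(X_3,X_4,X_1,X_2)$, so $(X_3,X_4,X_1,X_2)$ is a bad family for $N^*$ and hence $N^*\notin\mathcal{K}_4$, which is what was needed.

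The only real obstacle is the bookkeeping in that final substitution: one must verify that it is precisely the permutation $(X_3,X_4,X_1,X_2)$ (and not some other reindexing) that makes the duality formula reproduce Ingleton's inequality, and that every cardinality term and every $\rho$ term cancels between the two sides. This is a finite check, guided by the symmetry of Ingleton's inequality under $X_1\leftrightarrow X_2$ and under $X_3\leftrightarrow X_4$ to fix the correspondence between the two lists of five terms; once that is pinned down, nothing substantive remains.
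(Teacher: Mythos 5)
Your proposal is correct and is essentially the same argument as the paper's: reduce to an excluded minor $N$ whose bad family $X_1,\dots,X_4$ partitions $E(N)$ (via the preceding lemma), then use $r^*(A)=|A|+r(E-A)-r(N)$ and the partition structure to rewrite every corank term, cancel the cardinality and $r(N)$ contributions, and recognise the remaining rank terms as Ingleton's inequality for the primal. The paper phrases it contrapositively (assume $M\in\mathcal{K}_4$, $M^*\notin\mathcal{K}_4$, translate a bad family of $N^*$ into a violation for $N$) and realises the reindexing as $Y_i=\overline{X_{\sigma(i)}}$ with $(Y_1,\dots,Y_4)=(X_4,X_3,X_2,X_1)$, whereas you go the forward direction and use $(X_3,X_4,X_1,X_2)$; these two permutations differ only by the $X_1\leftrightarrow X_2$, $X_3\leftrightarrow X_4$ symmetry of Ingleton's inequality, so they are the same computation.
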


\begin{proof}
Assume for a contradiction that $M\in\mathcal{K}_4$, $M^*\notin\mathcal{K}_4$. Note that $M^*$ contains a minor-minimal matroid not in $\mathcal{K}_4$. Let $N$ be a minor of $M$ such that $N^*$ is an excluded minor for $\mathcal{K}_4$. Let \vect{X}{4} be a bad family of $N^*$. By the previous lemma, we have that \vect{X}{4} partitions $E(N^*)$. By assumption we have that
\begin{equation}
\begin{split}
&\qquad r^*(X_3)+r^*(X_4)+r^*(X_1\cup X_2)+r^*(X_1\cup X_3\cup X_4)+r^*(X_2\cup X_3\cup X_4)\\
&\nleq r^*(X_1\cup X_3)+r^*(X_1\cup X_4)+r^*(X_2\cup X_3)+r^*(X_2\cup X_4)+r^*(X_3\cup X_4)
\end{split}
\end{equation}
Recall $r^*(X)=|X|+r(E-X)-r(N)$. Use this identity on every term in the inequality. Now we see that (4.4.1) is true if and only if (4.4.2) is true.
\begin{equation}
\begin{split}
& |X_3|+|X_4|+r(\ol{X_3})+r(\ol{X_4})+|X_1\cup X_2|+r(\ol{X_1\cup X_2})\\ 
& \qquad +|X_1\cup X_3\cup X_4|+r(\ol{X_1\cup X_3\cup X_4})\\
& \qquad +|X_2\cup X_3\cup X_4|+r(\ol{X_2\cup X_3\cup X_4})\\
& \nleq |X_1\cup X_3|+r(\ol{X_1\cup X_3})+|X_1\cup X_4|+r(\ol{X_1\cup X_4})\\
& \qquad+|X_2\cup X_3|+r(\ol{X_2\cup X_3})+|X_2\cup X_4|\\
& \qquad +r(\ol{X_2\cup X_4})+|X_3\cup X_4|+r(\ol{X_2\cup X_4})
\end{split}
\end{equation}
where $\overline{X_i}=E-X$.
Note that every $-r(N)$ cancelled out as there is an equal number of terms on each side of the inequality.

Using the identities $|X_i\cup X_j|=|X_i|+|X_j|-|X_1\cap X_j|$ and $|X_i\cup X_j\cup X_k|=|X_i|+|X_j|+|X_k|-|X_i\cap X_j|-|X_i\cap X_k|-|X_j\cap X_k|+|X_i\cap X_j\cap X_k|$, we can simplify this. Fully apply these identities to each cardinality term. We now have that (4.4.2) is true if and only if (4.4.3) is true.
\begin{equation}
\begin{split}
& |X_3|+|X_4|+r(\ol{X_3})+r(\ol{X_4})\\
&\qquad+|X_1|+|X_2|-|X_1\cap X_2|+r(\ol{X_1\cup X_2})\\ 
& \qquad +|X_1|+|X_3|+|X_4|-|X_1\cap X_3|-|X_1\cap X_4|\\
&\qquad\qquad-|X_3\cap X_4|+|X_1\cap X_3\cap X_4|+r(\ol{X_1\cup X_3\cup X_4})\\
& \qquad +|X_2|+|X_3|+|X_4|-|X_2\cap X_3|-|X_2\cap X_4|-|X_3\cap X_4|\\
&\qquad\qquad+|X_2\cap X_3\cap X_4|+r(\ol{X_2\cup X_3\cup X_4})\\
& \nleq |X_1|+|X_3|-|X_1\cap X_3|+r(\ol{X_1\cup X_3})\\
&\qquad+|X_1|+|X_4|-|X_1\cap X_4|+r(\ol{X_1\cup X_4})\\
& \qquad+|X_2|+|X_3|-|X_2\cap X_3|+r(\ol{X_2\cup X_3})\\
&\qquad+|X_2|+|X_4|-|X_2\cap X_4|+r(\ol{X_2\cup X_4})\\
& \qquad +|X_3|+|X_4|-|X_3\cap X_4|+r(\ol{X_2\cup X_4})
\end{split}
\end{equation}

As \vect{X}{4} is a partition of $E(M)$, any term $X_i\cap X_j$ is empty. Cancelling out these terms, and all common terms, this yields
\begin{equation}
\begin{split}
&\qquad r(\overline{X_3})+r(\overline{X_4})+r(\overline{X_1\cup X_2})+r(\overline{X_1\cup X_3\cup X_4})+r(\overline{X_2\cup X_3\cup X_4})\\
& \nleq r(\overline{X_1\cup X_3})+r(\overline{X_1\cup X_4})+r(\overline{X_2\cup X_3})+r(\overline{X_2\cup X_4})+r(\overline{X_3\cup X_4})
\end{split}
\end{equation}
Now let $Y_1=\overline{X_1\cup X_2\cup X_3}$, $Y_2=\overline{X_1\cup X_2\cup X_4}$, $Y_3=\overline{X_1\cup X_3\cup X_4}$, and $Y_4=\overline{X_2\cup X_3\cup X_4}$. 

We have the following equalities
\begin{align*}
Y_1\cup Y_3\cup Y_4 & = \overline{X_1\cup X_2\cup X_3}\cup\overline{X_1\cup X_3\cup X_4}\cup\overline{X_2\cup X_3\cup X_4}\\
& = X_4\cup X_2\cup X_1\\
& = \ol{X_3}
\end{align*}
\begin{align*}
Y_2\cup Y_3\cup Y_4 & = \overline{X_1\cup X_2\cup X_4}\cup\overline{X_1\cup X_3\cup X_4}\cup\overline{X_2\cup X_3\cup X_4}\\
& = X_3\cup X_2\cup X_1\\
& = \ol{X_4}\\
\end{align*}
\begin{align*}
Y_1\cup Y_2 & = \overline{X_1\cup X_2\cup X_3}\cup\overline{X_1\cup X_2\cup X_4}\\
& = X_4\cup X_3\\
& = \ol{X_1\cup X_2}
\end{align*}
\begin{align*}
Y_1\cup Y_3 & = \overline{X_1\cup X_2\cup X_3}\cup\overline{X_1\cup X_3\cup X_4}\\
& = X_4\cup X_2\\
& = \ol{X_1\cup X_3}
\end{align*}
\begin{align*}
Y_2\cup Y_3 & = \overline{X_1\cup X_2\cup X_4}\cup\overline{X_1\cup X_3\cup X_4}\\
& = X_3\cup X_2\\
& = \ol{X_1\cup X_4}
\end{align*}
\begin{align*}
Y_1\cup Y_4 & = \overline{X_1\cup X_2\cup X_3}\cup\overline{X_2\cup X_3\cup X_4}\\
& = X_4\cup X_1\\
& = \ol{X_2\cup X_3}
\end{align*}
\begin{align*}
Y_2\cup Y_4 & = \overline{X_1\cup X_2\cup X_4}\cup\overline{X_2\cup X_3\cup X_4}\\
& = X_3\cup X_1\\
& = \ol{X_2\cup X_4}
\end{align*}
\begin{align*}
Y_3\cup Y_4 & = \overline{X_1\cup X_3\cup X_4}\cup\overline{X_2\cup X_3\cup X_4}\\
& = X_2\cup X_1\\
& = \ol{X_3\cup X_4}
\end{align*}

Inequality (4.5.4) thus becomes
\begin{align*}
r(Y_1\cup Y_3\cup Y_4)+r(Y_2\cup Y_3\cup Y_4)+r(Y_1\cup Y_2)+r(Y_3)+r(Y_4)\\
\nleq r(Y_1\cup Y_3)+r(Y_2\cup Y_3)+r(Y_1\cup Y_4)+r(Y_2\cup Y_4)+r(Y_3\cup Y_4)
\end{align*}
These rank terms are exactly those of inequality $4$, which holds for \vect{Y}{4} as $N\in\mathcal{K}_4$. Thus $N^*$ also satisfies inequality $4$, for all choices of \vect{X}{4}.
\end{proof}

We conjecture that this is the only Kinser class which is dual-closed. We provide a proof of this for only the second class. There appears to be no simple way to verify that a matroid satisfies a particular Kinser inequality, which leads to the difficulty involved in the next chapter. In a later chapter, we will provide a result on the complexity of verifying that a matroid satisfies Kinser inequality $n$.

\begin{thm}
$\mathcal{K}_5\neq\mathcal{K}_5^*$
\end{thm}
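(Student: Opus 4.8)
The plan is to produce a single matroid $N$ for which $N\in\mathcal{K}_5$ but $N^*\notin\mathcal{K}_5$; since then $N^*\in\mathcal{K}_5^*\setminus\mathcal{K}_5$, this forces $\mathcal{K}_5\neq\mathcal{K}_5^*$. The obvious place to look is the relaxed Kinser matroid of rank $5$: by Lemma \ref{gaps} we have $\kin{5}^-\in\mathcal{K}_4\setminus\mathcal{K}_5$, with $V_1,\dots,V_5$ an explicit bad family for inequality $5$. So I would set $N=(\kin{5}^-)^*$, whence $N^*=\kin{5}^-\notin\mathcal{K}_5$ for free, and the whole task reduces to showing $N\in\mathcal{K}_5$. (A priori $N$ could fail $\mathcal{K}_5$ as well; that it does not is exactly the content of the theorem.)

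To set up the verification of $N\in\mathcal{K}_5$ I would first make both the matroid and the search space as small as possible. By Lemma \ref{relax}, $N$ is obtained from $\kin{5}^*$ by relaxing the circuit-hyperplane $E(\kin{5})-(V_1\cup V_2)$, so its rank function is completely explicit (a relaxation of the dual of the truncation of a transversal matroid); Lemma \ref{relax2} then describes how this relaxation interacts with single-element deletion and contraction, so that via Lemma \ref{minors} it is enough to examine a short list of minors of $N$. For each such minor, suppose toward a contradiction that $X_1,\dots,X_5$ is a bad family for inequality $5$. By Lemmas \ref{indp} and \ref{flats} we may take every $X_i$ to be a flat, and the deletion argument inside Lemma \ref{minors} lets us assume $X_1\cup\dots\cup X_5$ is the whole ground set; a contraction argument in the style of the lemma used to prove $\mathcal{K}_4=\mathcal{K}_4^*$ further forces the $X_i$ to overlap only in tightly controlled ways. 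What remains is a finite list of candidate families, and for each one I would compute the ten ranks appearing in Definition \ref{inequality} from the explicit structure of $N$ --- tracking which unions are spanning, which are the circuit-hyperplanes $V_2\cup V_i$, which are the hyperplanes $V_i\cup V_{i+1}$ or $V_1\cup V_3$, and so on, exactly as in the bookkeeping of Lemma \ref{gaps} --- and confirm that inequality $5$ in fact holds, contradicting badness.

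The main obstacle is the one the paper itself flags: once we leave the representable world there is no submodularity identity to lean on (contrast the proof of Lemma \ref{rep}), so ``$N\in\mathcal{K}_5$'' has to be established by an honest, if finite, enumeration of families of flats --- realistically a computer-assisted check --- and the genuine work lies in sharpening the reductions above enough that the number of cases stays checkable. A secondary subtlety is that $\kin{5}^-$ need not itself be an excluded minor for $\mathcal{K}_5$, so one must either establish that it is (so the ``partition'' normalisation applies directly) or carry the extra generality --- overlapping, non-spanning families in proper minors of $N$ --- through the case analysis.
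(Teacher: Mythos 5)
You correctly identify the witness: take $N=(\kin{5}^-)^*$, use Lemma~\ref{gaps} to get $\kin{5}^-\notin\mathcal{K}_5$ ``for free'', and reduce the theorem to showing $N\in\mathcal{K}_5$. This matches the paper's overall plan exactly. The issue is the machinery you propose for the remaining (and only substantive) step.

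First, the minor reduction you lean on is a non sequitur: Lemma~\ref{minors} says $\mathcal{K}_5$ is \emph{minor-closed}, which lets you descend ``$M\notin\mathcal{K}_5\Rightarrow M$ has an excluded minor'', but it does not reduce verifying $N\in\mathcal{K}_5$ to examining a short list of minors of $N$ --- you still have to rule out a bad family in $N$ itself. Second, the ``partition'' normalisation you want to import was proved only for excluded minors of $\mathcal{K}_4$, and its argument is arithmetic about there being exactly \emph{five} terms per side; it does not carry over to inequality~$5$ without new work, and you flag this yourself without resolving it. Third, and most importantly, you miss the driving mechanism the paper actually uses: because $E-(V_1\cup V_2)$ is a relaxed circuit-hyperplane of $(\kin{5}^-)^*$ whose tightening is representable, and $V_1\cup V_i\cup V_j$ (for $\{i,j\}\subset\{3,4,5\}$) are circuit-hyperplanes whose single relaxation is again representable, a putative bad family is \emph{forced} to place $V_3\cup V_4\cup V_5$ as an LHS term and all three $V_1\cup V_i\cup V_j$ as RHS terms. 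Together with a free-placement argument for $V_1$ in $\kin{5}^-/e$ this shows both elements of $V_2$ lie in a single $X_i$, which collapses the search to four symmetry classes of configurations, each then disposed of by explicit corank arithmetic. You gesture at ``the bookkeeping of Lemma~\ref{gaps}'', but the relevant structure is the \emph{dual} structure (circuit-hyperplanes of the form $V_1\cup V_i\cup V_j$, not $V_2\cup V_i$), and you end by conceding the verification is realistically computer-assisted --- whereas this constraint-propagation is precisely what makes the hand proof finite. So: right witness, right flats/independence normalisations, but the crucial argument establishing $(\kin{5}^-)^*\in\mathcal{K}_5$ is absent and the replacements you propose do not close the gap.
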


The proof of this theorem forms the next chapter. There are now two possibilities of how the Kinser classes sit in the hierarchy, shown in the following diagrams.


\begin{figure}[h]
\centering
\includegraphics[scale=0.6]{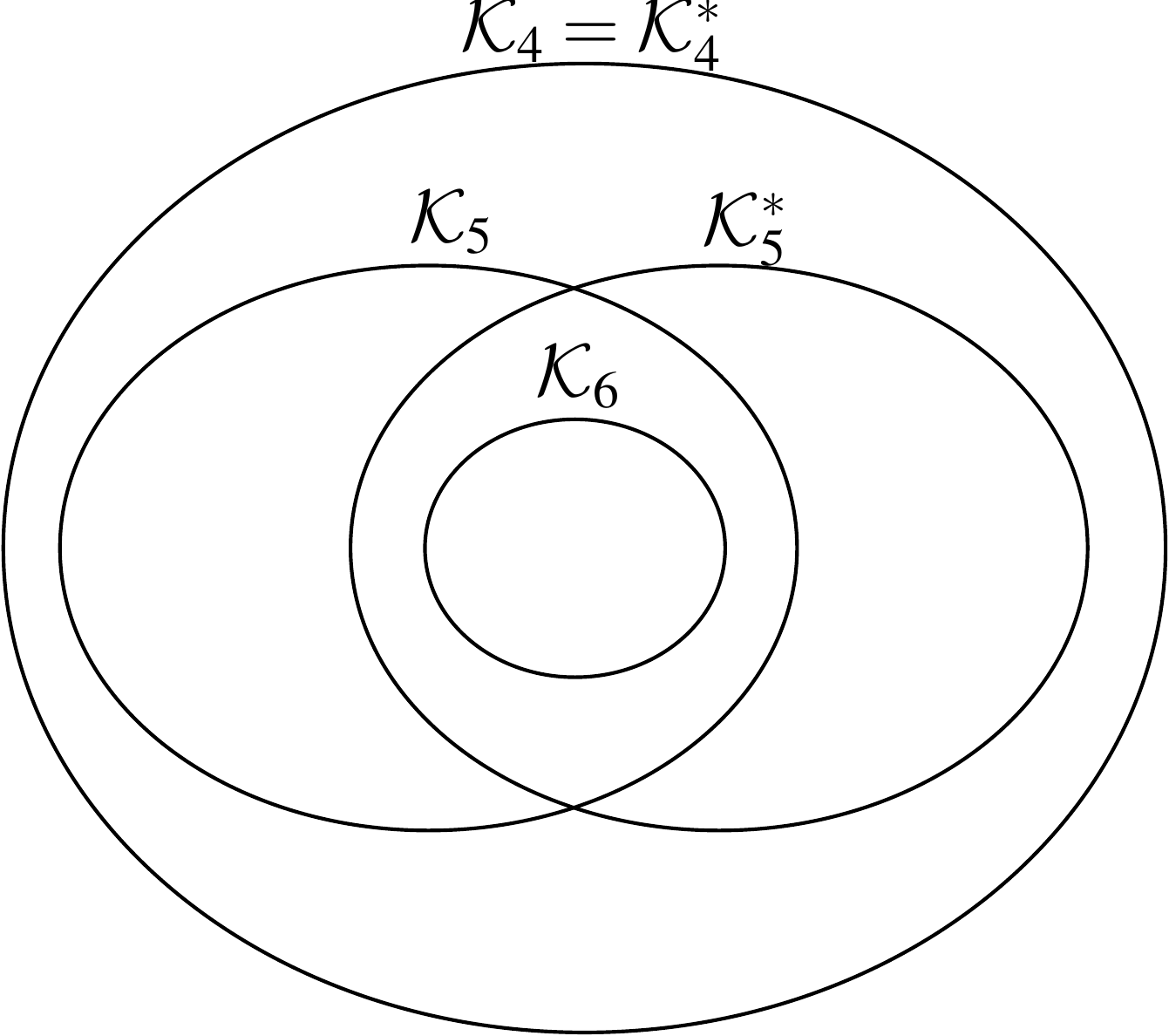}
\caption{Kinser classes (3a)}
\end{figure}

\begin{figure}[h]
\centering
\includegraphics[scale=0.6]{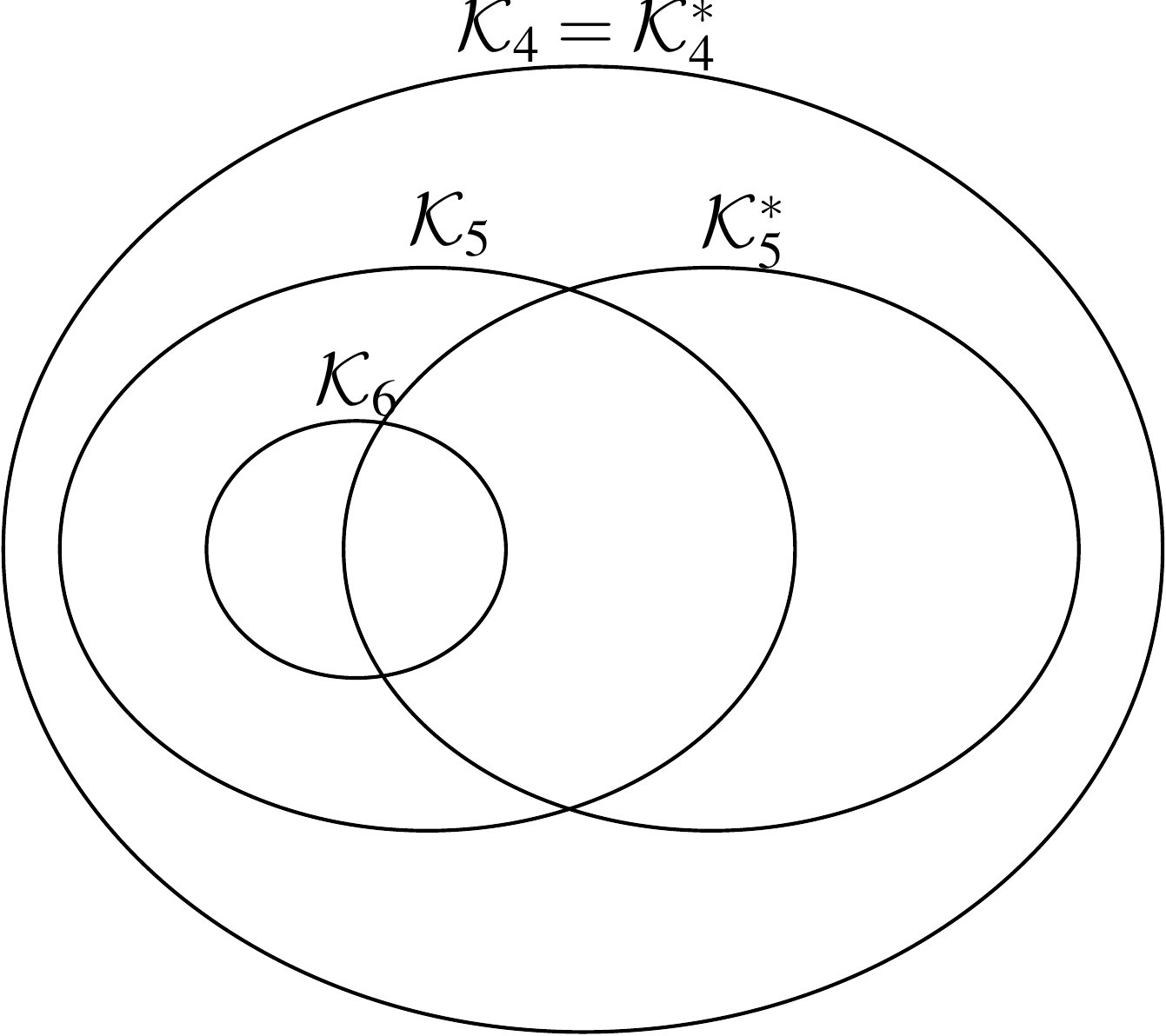}
\caption{Kinser classes (3b)}
\end{figure}

\chapter{\texorpdfstring{$\mathcal{K}_5\neq\mathcal{K}_5^*$}{K5 != K5*}}

\begin{thm}
$\kin{5}^-\in\mathcal{K}_5^*$
\end{thm}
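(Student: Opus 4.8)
The goal is to show $\kin{5}^-\in\mathcal{K}_5^*$, i.e.\ that $(\kin{5}^-)^*$ satisfies every fifth Kinser inequality. Combined with Lemma~\ref{gaps}, which gives $\kin{5}^-\notin\mathcal{K}_5$, this will establish $\mathcal{K}_5\neq\mathcal{K}_5^*$. The plan is to argue by contradiction: suppose $(\kin{5}^-)^*\notin\mathcal{K}_5$, so there is a bad family $X_1,\ldots,X_5$ of subsets of $E((\kin{5}^-)^*)=E(\kin{5}^-)$ for the fifth Kinser inequality, evaluated with the dual rank function $r^*$. By Lemmas~\ref{indp} and~\ref{flats} we may assume each $X_i$ is a flat of $(\kin{5}^-)^*$, equivalently (since duality swaps flats and unions of cocircuits) we have strong structural control on each $X_i$ inside $\kin{5}^-$. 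The idea is then to use the relaxation structure of $\kin{5}^-$: recall $\kin{5}^-$ is obtained from $\kin{5}$ by relaxing the circuit-hyperplane $V_1\cup V_2$, and by Lemma~\ref{relax} relaxing $V_1\cup V_2$ in $\kin{5}$ corresponds to relaxing the complementary circuit-hyperplane $E-(V_1\cup V_2)$ in $\kin{5}^*$.

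First I would compute $(\kin{5})^*$ and $(\kin{5}^-)^*$ explicitly enough to identify which subsets are independent, spanning, hyperplanes, and circuit-hyperplanes, mirroring the table of rank values used in the proof of Lemma~\ref{gaps}. The key leverage is that $\kin{5}$ has the circuit-hyperplanes $V_2\cup V_i$ for $i\in\{1,3,4,5\}$, and relaxing any further such $V_2\cup V_j$ (with $j\in\{3,4,5\}$) in addition to $V_1\cup V_2$ yields the $\mathbb{K}$-representable matroid $\kin{5}_j^=$ by Lemma~\ref{kin=}; dually, each $(\kin{5}_j^=)^*$ is representable and hence lies in $\mathcal{K}_5$ by Lemma~\ref{rep}. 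So in $(\kin{5}^-)^*$ the only obstruction to membership in $\mathcal{K}_5$ must come from the interaction with the cocircuit-hyperplanes $E-(V_2\cup V_j)$. I would then run the same graph argument as in Lemma~\ref{gaps}: build the graph $G$ on $X_1,\ldots,X_5$ whose edges are the right-hand-side terms of inequality $5$, look at the induced subgraph $G'$ on edges that equal one of these special (co)circuit-hyperplanes, and show that the forbidden path/cycle configurations force a contradiction, while a counting argument ($n-2$ required such edges versus at most $n-2$ available vertices after one vertex is spent carrying the $V_1$-analogue) shows such a configuration must occur.

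The main obstacle I anticipate is the bookkeeping: unlike $\kin{5}$, whose relevant subsets are exactly the $V_i$ and their unions, a bad family for $(\kin{5}^-)^*$ need not be built from the complements of the $V_i$, so after reducing to flats I must still classify \emph{all} flats of $(\kin{5}^-)^*$ and determine the dual-rank of each union, triple, and pair occurring in inequality $5$. Concretely this means understanding the cocircuits of $\kin{5}^-$ (= complements of hyperplanes of $\kin{5}^-$), which requires knowing the hyperplanes of $\kin{5}^-$, which differ from those of $\kin{5}$ precisely in that $V_1\cup V_2$ is no longer a hyperplane. I expect that once this classification is in hand, the substitution of rank values into inequality $5$ will, as in Lemma~\ref{gaps}, reduce to a small arithmetic inequality in $n=5$ that holds with room to spare in every case except possibly the ``generic'' assignment, and that last case is exactly where the graph-theoretic non-existence argument closes the gap.

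A cleaner alternative, which I would try first if the direct computation gets unwieldy, is to exploit Lemma~\ref{relax2}: every single-element minor of $\kin{5}^-$ is either a minor of $\kin{5}$ (hence representable, hence in $\mathcal{K}_5^*$ after dualizing, using that $\mathcal{K}_5$ is minor-closed by Lemma~\ref{minors}) or is obtained from a minor of $\kin{5}$ by a relaxation that stays representable; dualizing, $(\kin{5}^-)^*$ is an excluded-minor candidate, so if it failed inequality $5$ it would be an excluded minor for $\mathcal{K}_5$, and one could then invoke a partition-type reduction as in the $\mathcal{K}_4$ case to pin down the bad family. Either route ends with the same graph/counting contradiction, so I would present whichever makes the flat classification shortest.
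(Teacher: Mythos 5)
Your proposal is a high-level plan rather than a proof, and the centerpiece of that plan does not carry over to the dual setting. You propose to run ``the same graph argument as in Lemma~\ref{gaps}'': build $G'$ on the edges that must equal the required circuit-hyperplanes and derive a contradiction from a forbidden path of length three, using a vertex/edge count. In Lemma~\ref{gaps} this worked because the required circuit-hyperplanes had the form $V_2\cup V_i$, $V_2\cup V_j$, $V_2\cup V_k$: any two of them intersect in exactly $V_2$, so if $X_a\cup X_b=V_2\cup V_i$ and $X_b\cup X_c=V_2\cup V_j$ then $X_b\subseteq V_2$ (and likewise $X_c\subseteq V_2$), which makes $X_b\cup X_c=V_2\cup V_j$ impossible. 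In $(\kin{5}^-)^*$ the circuit-hyperplanes that must appear on the right-hand side are instead $V_1\cup V_4\cup V_5$, $V_1\cup V_3\cup V_5$, $V_1\cup V_3\cup V_4$. These overlap pairwise in $V_1\cup V_\ell$ (for some $\ell$), not in a fixed set, so the same deduction only gives $X_b\subseteq V_1\cup V_5$ and $X_c\subseteq V_1\cup V_3$, and then $X_b\cup X_c\subseteq V_1\cup V_3\cup V_5$ is perfectly consistent with $X_b\cup X_c=V_1\cup V_3\cup V_5$. There is no forbidden three-path here, so the counting-plus-forbidden-configuration contradiction never materialises.

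You also do not address the step the paper spends the most effort on: showing that the two elements $e,f$ of $V_2$ must lie in one and the same set $X_i$. Establishing this requires showing (via the transversal structure of $M_6$ and free placement of the points of $V_1$) that $\kin{5}^-/e$ is representable, which rules out the possibility that $e$ avoids every $X_i$; and a separate term-counting argument to rule out $e$ and $f$ landing in different $X_i$'s. Only after that does the proof split into the cases $V_2\subseteq X_1$, $X_2$, $X_3$ (with $X_5$ symmetric to $X_3$), $X_4$, each with multiple subcases handled by explicit corank arithmetic (e.g.\ evaluating $LHS$ and $RHS$ directly and bounding the effect of enlarging $X_i$ by a residual set $Z$). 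Your proposal does not contain a substitute for any of this; the ``cleaner alternative'' via Lemma~\ref{relax2} and excluded minors is also not developed to the point where one can see it closing the argument. In short: right ingredients identified, but the proposed mechanism for the final contradiction is the one place the analogy with Lemma~\ref{gaps} actually breaks down, and the case-by-case rank computation that the paper substitutes for it is not sketched.
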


\begin{proof}
Recall that $\kin{5}^-$ is the matroid obtained from the rank $5$ Kinser matroid by relaxing the circuit-hyperplane $V_1\cup V_2$. Assume ($\kin{5}^-)^*$ has at least one bad family $X_{1},\ldots,X_{5}$ violating Kinser inequality $5$, where each set $X_i$ is a flat of $(\kin{5}^-)^*$.

Note the complement of $V_{1}\cup V_{2}$ is a relaxed circuit-hyperplane in $(\kin{5}^-)^*$, and tightening it produces Kin($5$)$^*$, which is representable. This operation affects the rank function by decreasing the rank of exactly one set: $V_{3}\cup V_{4}\cup V_{5}$. Suppose $V_3\cup V_4\cup V_5$ is not a term in inequality $5$ as applied to \vect{X}{5}. Then evaluating the inequality after relaxing the circuit-hyperplane would give the same value as before relaxing, contradicting the change in representability. As the rank of $V_{3}\cup V_{4}\cup V_{5}$ decreases by one when we tighten it, the only way for the inequality to hold only after the tighening is for the left-hand side of the inequality to decrease and the right-hand side to remain the same. So $V_3\cup V_4\cup V_5$ must be a term on the left-hand side of the inequality. 

Next note that $V_2\cup V_3$, $V_2\cup V_4$, and $V_2\cup V_5$ are circuit-hyperplanes on $\kin{5}^-$, so $V_{1}\cup V_{4}\cup V_{5}$, $V_{1}\cup V_{3}\cup V_{5}$, $V_{1}\cup V_{3}\cup V_{4}$ are circuit-hyperplanes of $(\kin{5}^-)^*$. Relaxing any one of these will make $(\kin{5}^-)^*$ representable again. These subsets must thus be terms in the inequality. The rank of each of these subsets increases by one when relaxed, so for the inequality to hold the right-hand side must increase, meaning these three sets must be terms on the right-hand side of inequality $5$. 

Let $e\in V_2$. If $e\notin\vect{X}{5}$, then $(\kin{5}^-)^*\backslash
e=(\kin{5}^-/e)^*$ has a bad family. This means that $\kin{5}^-/e$ is
not representable. We will show that the elements of $V_1$ are freely
placed in $\kin{5}^-/e$ . Pick $z\in V_1$. Let $Z$ be a non-spanning
subset in $\kin{5}^-/e$ such that $z\in cl_{\kin{5}^-/e}(Z)-Z$. This
implies by \cite[Proposition lookitup]{Oxley} $z\in cl_{\kin{5}^-}(Z\cup
\{e\})$. Note that the closure of a set in \kin{5} only differs from
that in $\kin{5}^-$ in that it may contain additional elements in
\kin{5}. Thus $z\in cl_{\kin{5}}(Z\cup \{e\})$. 

Recall that $M_6$ is
the transversal matroid whose truncation is defined to be \kin{5}. As
$Z\cup \{e\}$ does not span \kin{5}, truncation does not affect the rank
of this subset, and so $z\in cl_{M_6}(Z\cup \{e\})$. Now consider the
neighbours of $Z\cup \{e\}$ in the transversal system $\mathcal{A}$. The
neighbours of $z$ must be contained in this set, as otherwise the rank
of $Z\cup \{e,f\}$ would be greater than the rank of $Z\cup \{e\}$,
contradicting $z\in cl(Z\cup \{e\})$. Recall that in $M_{r+1}$, $V_i$ is
incident with $A_0,\ldots,A_{i-1},A_{i+2},\ldots,A_r$. $Z\cup\{e\}$ is
thus neighbours with $A_0,A_1,A_3$, and $A_4$.

Recall $Z$ is
non-spanning in $\kin{5}^-/e$. $\kin{5}^-/e$ has rank $4$, so
$r_{\kin{5}^-/e}(Z)\leq 3$, and $r_{\kin{5}^-}(Z\cup\{e\})\leq 4$. This
rank cannot change after tightening $V_1\cup V_2$, so
$r_{\kin{5}}(Z\cup\{e\})\leq 4$. As $Z\cup\{e\}$ is hence non-spanning in
\kin{5}, $r_{M_6}(Z\cup\{e\}\leq 4$. This means that $Z\cup\{e\}$ has
exactly $A_0,A_1,A_3,$ and $A_4$ as neighbours. Thus
$Z\cup\{e\}\subseteq V_1\cup\{e,f\}$. This implies that $Z\cup\{e,z\}$
is independent in $\kin{5}^-$, and so $Z\cup\{z\}$ is independent in
$\kin{5}^-/e$. This is a contradiction, as $z\in cl_{\kin{5}^-/e}(Z)-Z$.
Thus $z$ is free in $\kin{5}^-/e$ and $\kin{5}^-/e$ is a free-extension
of $\kin{5}^-/e\backslash z=\kin{5}/e\backslash z$ which is
$\mathbb{K}$-representable. This contradicts $(\kin{5}^-)^*\backslash
e=(\kin{5}^-/e)^*$ having a bad family, and so all elements of $V_2$
must be contained in some set $X_i$.

Suppose the elements of $V_2$ are contained in different sets in the bad family. That is, suppose $e\in X_i$ and $f\in X_j$ for some $i,j$.The three circuit-hyperplanes $V_1\cup V_3\cup V_4$, $V_1\cup V_4\cup V_5$, $V_1\cup V_3\cup V_5$ and the relaxed circuit-hyperplane $V_3\cup V_4\cup V_5$ do not include elements of $V_2$, and thus cannot be equal to terms in the inequality which use $X_i$ or $X_j$. Removing these terms from the inequality leaves at most three possible terms for these circuit-hyperplanes, regardless of the values of $i$ and $j$. Thus both elements of $V_2$, $e$ and $f$, must be contained in the same set $X_i$ of the bad family. 

Considering each possible location for the elements of $V_2$ gives us five cases -- $V_2\cap X_1\neq\varnothing, V_2\cap X_2\neq\varnothing, V_2\cap X_3\neq\varnothing, V_2\cap X_4\neq\varnothing, V_2\cap X_5\neq\varnothing$. Take the case $X_5\cap V_2\neq\varnothing$.
Swapping $X_3$ and $X_5$ gives the inequality, and this case is thus identical to the case where $V_2\cap X_3\neq\varnothing.$ We thus reduce to the four cases examined below.

\newtheorem*{case1}{Case 1}
\begin{case1}
$X_3\cap V_2\neq\varnothing$\\
Consider which term on the left-hand side of the inequality must be $V_3\cup V_4\cup V_5$. As this term does not contain any elements of $V_2$ and these elements are contained in $X_3$, the term cannot involve $X_3$. In the following diagram, the ovals represent the sets \vect{X}{5}. An edge between two sets represents the union of those two sets. The edges shown below are on the right-hand side of the inequality, and thus possible locations for the necessary circuit-hyperplanes $ V_{1}\cup V_{3}\cup V_{5}$, $V_{1}\cup V_{4}\cup V_{5}$, $V_{1}\cup V_{3}\cup V_{4}$. Note that $X_1\cup X_2$, indicated by the dashed line, appears on the left-hand side of the inequality. The edges coming from $X_3$ have been left off as, since the elements of $V_2$ are contained in $X_3$, none of these edges could represent the three circuit-hyperplanes.

\begin{figure}[ht]
\centering
\begin{tikzpicture}[thin,line join=round]
\node[ellipse, draw] (x1) at (0,3.5) {$X_1$};
\node[ellipse, draw] (x2) at (0,0) {$X_2$};
\node[ellipse, draw] (x3) at (3.5,0) {$X_3$};
\node[ellipse, draw] (x4) at (0,-3.5) {$X_4$};
\node[ellipse, draw] (x5) at (-3.5,0) {$X_5$};
\draw (x1) -- (x5) -- (x4) -- (x2) -- (x5);
\draw[dashed] (x1) -- (x2);
\end{tikzpicture}
\end{figure}

If $V_3\cup V_4\cup V_5$ is equal to $X_4$ or $X_5$ then any term involving these sets cannot be one of the circuit-hyperplanes $V_{1}\cup V_{3}\cup V_{5}$, $V_{1}\cup V_{4}\cup V_{5}$, $V_{1}\cup V_{3}\cup V_{4}$ which must appear on the right-hand side of the inequality. This will not leave us with enough terms on the right-hand side which could be these three circuit-hyperplanes -- if $V_3\cup V_4\cup V_5=X_4$, then the terms available for the circuit-hyperplanes are $X_1\cup X_5$ and $X_2\cup X_5$, while if $V_3\cup V_4\cup V_5=X_5$, there are again only two terms available, $X_1\cup X_2$ and $X_2\cup X_4$. If $V_{3}\cup V_{4}\cup V_{5}=X_{2}\cup X_{4}\cup X_{5}$, then elements of $V_{1}$ can only be in $X_1$. Note that all of the three circuit-hyperplanes which must appear on the right-hand side include $V_1$. However, as $X_3\cap V_2\neq\varnothing$, the only free term using $X_1$ is $X_1\cup X_5$, so we are unable to have the three needed circuit-hyperplanes on the right-hand side. The only remaining possibility is that $V_3\cup V_4\cup V_5=X_1\cup X_2$. 

We now have four possibilities of where the circuit-hyperplanes on the right-hand side of the inequality lie, and we will consider various subcases. Let $\{i,j,k\}=\{3,4,5\}$. The following diagrams show these subcases, as indicated by the bold lines.

\pagebreak

\begin{figure}
\centering
\subfloat[Subcase 1]{
\begin{tikzpicture}[thin,line join=round]
\node[ellipse, draw] (x1) at (0,2.5) {$X_1$};
\node[ellipse, draw] (x2) at (0,0) {$X_2$};
\node[ellipse, draw] (x3) at (2.5,0) {$X_3$};
\node[ellipse, draw] (x4) at (0,-2.5) {$X_4$};
\node[ellipse, draw] (x5) at (-2.5,0) {$X_5$};
\draw[ultra thick] (x1) -- (x5) -- (x2) -- (x4);
\draw (x4) -- (x5);
\draw[dashed] (x1) -- (x2);
\end{tikzpicture}}
\qquad
\subfloat[Subcase 2]{
\begin{tikzpicture}[thin,line join=round]
\node[ellipse, draw] (x1) at (0,2.5) {$X_1$};
\node[ellipse, draw] (x2) at (0,0) {$X_2$};
\node[ellipse, draw] (x3) at (2.5,0) {$X_3$};
\node[ellipse, draw] (x4) at (0,-2.5) {$X_4$};
\node[ellipse, draw] (x5) at (-2.5,0) {$X_5$};
\draw[ultra thick] (x1) -- (x5) -- (x2);
\draw[ultra thick] (x4) -- (x5);
\draw (x2) -- (x4);
\draw[dashed] (x1) -- (x2);
\end{tikzpicture}}

\subfloat[Subcase 3]{
\begin{tikzpicture}[thin,line join=round]
\node[ellipse, draw] (x1) at (0,2.5) {$X_1$};
\node[ellipse, draw] (x2) at (0,0) {$X_2$};
\node[ellipse, draw] (x3) at (2.5,0) {$X_3$};
\node[ellipse, draw] (x4) at (0,-2.5) {$X_4$};
\node[ellipse, draw] (x5) at (-2.5,0) {$X_5$};
\draw[ultra thick] (x4) -- (x5) -- (x2) -- (x4);
\draw (x1) -- (x5);
\draw[dashed] (x1) -- (x2);
\end{tikzpicture}}
\qquad
\subfloat[Subcase 4]{
\begin{tikzpicture}[thin,line join=round]
\node[ellipse, draw] (x1) at (0,2.5) {$X_1$};
\node[ellipse, draw] (x2) at (0,0) {$X_2$};
\node[ellipse, draw] (x3) at (2.5,0) {$X_3$};
\node[ellipse, draw] (x4) at (0,-2.5) {$X_4$};
\node[ellipse, draw] (x5) at (-2.5,0) {$X_5$};
\draw[ultra thick] (x1) -- (x5) -- (x4) -- (x2);
\draw (x2) -- (x5);
\draw[dashed] (x1) -- (x2);
\end{tikzpicture}}
\end{figure}

\pagebreak

In each subcase examined below, we are considering the circuit-hyperplanes to lie as indicated by the diagram at the start of each subcase. In all of these, there are multiple options for which elements are in what sets, as described below.

\pagebreak

{\bf Subcase 1.1.}

\begin{figure}[ht]
\centering
\begin{tikzpicture}[thin,line join=round]
\node[ellipse, draw] (x1) at (0,3.5) {$X_1$};
\node[ellipse, draw] (x2) at (0,0) {$X_2$};
\node[ellipse, draw] (x3) at (3.5,0) {$X_3$};
\node[ellipse, draw] (x4) at (0,-3.5) {$X_4$};
\node[ellipse, draw] (x5) at (-3.5,0) {$X_5$};
\draw (x1) -- node[rotate=45, above] {$V_1 \cup V_j \cup V_k$} (x5) -- (x4) -- node[rotate=-90, above] {$V_1 \cup V_i \cup V_j$} (x2) -- node[above] {$V_1 \cup V_i \cup V_k$} (x5);
\draw[dashed] (x1) -- (x2);
\end{tikzpicture}
\end{figure}

Here, we have assumed that $V_2\subseteq X_3$. $X_3$ may contain other elements of the ground set of $M$, thus any terms in the inequality involving $X_3$ will not be immediately evaluated. Recall that $X_1\cup X_2=V_3\cup V_4\cup V_5$. This means that $V_1$ can only be a subset of $X_4$ or $X_5$. In order for the three circuit-hyperplanes to lie as indicated, both $X_4$ and $X_5$ must contain $V_1$. $X_2$ must be equal to $V_i$, as this forms the common values of the two circuit-hyperplanes involving $X_2$, excluding $V_1$. Finally, $X_5$ must be equal to $V_1\cup V_k$, while $X_1$ must contain $V_j\cup V_k$. As we assumed the sets $X_i$ to be flats, $X_1$ could also include one or two elements of $V_i$. It cannot contain all three elements, as $V_i\cup V_j\cup V_k$ is a relaxed circuit-hyperplane, and thus has closure equal to the entire ground set. This gives the following three subcases, which differ only in elements of $X_1$. In each subcase, we will begin by evaluating terms of the left-hand and right-hand sides of inequality $5$, then show that the left-hand side must be lower in value, meaning that the inequality holds.

\begin{itemize}[Subcase 1.1a:] 
\item $ X_1=V_j\cup V_k$ \\
$X_2=V_i$\\
$X_3=V_2\cup Z$ where $Z\subseteq E(M)$\\
$X_4=V_1\cup V_j$\\
$X_5= V_1\cup V_k$
\end{itemize}

Note that $\ol{X_1\cup X_2}=V_1\cup V_2$ and that $\ol{X_2\cup X_4\cup X_5}=\ol{V_1\cup V_i\cup V_j\cup V_k}=V_2$.

We will use $M$ to refer to $\kin{5}^-$.

\begin{equation*}
\begin{split}
LHS & = r^*(X_3)+r^*(X_4)+r^*(X_5)+r^*(X_1\cup X_2)+r^*(X_1\cup X_3\cup X_5)\\&\qquad+r^*(X_2\cup X_3\cup X_4)+r^*(X_2\cup X_4\cup X_5) \\
& = r^*(X_3)+r^*(X_1\cup X_3\cup X_5)+r^*(X_2\cup X_3\cup X_4)\\
&\qquad+|X_4|+|X_5|+|X_1\cup X_2| +|X_2\cup X_4\cup X_5|\\
&\qquad +r(\ol{X_4})+r(\ol{X_5}) + r(\ol{X_1 \cup X_2})+ r(\ol{X_2 \cup X_4\cup X_5})-4r(M) \\
& = r^*(X_3)+r^*(X_1\cup X_3\cup X_5)+r^*(X_2\cup X_3\cup X_4)-4r(M)\\
&\qquad+6+6+9+12+5+5+5+2 \\
& = r^*(X_3)+r^*(X_1\cup X_3\cup X_5)+r^*(X_2\cup X_3\cup X_4)-4r(M)+50\\
\end{split}
\end{equation*}
\begin{equation*}
\begin{split}
RHS & = r^*(X_1\cup X_3)+r^*(X_1\cup X_5)+r^*(X_2\cup X_3)+r^*(X_2\cup X_4)\\
&\qquad+r^*(X_2\cup X_5)+r^*(X_3\cup X_4)+r^*(X_4\cup X_5) \\
& = r^*(X_1\cup X_3)+r^*(X_2\cup X_3)+r^*(X_3\cup X_4)+|X_1\cup X_5| \\
&\qquad+|X_2\cup X_4|+|X_2\cup X_5|+|X_4\cup X_5|+r(\ol{X_1\cup X_5})\\
&\qquad+r(\ol{X_2\cup X_4})+r(\ol{X_2\cup X_5})+r(\ol{X_4\cup X_5})-4r(M) \\
& \geq r^*(X_1\cup X_3)+r^*(X_2\cup X_3)+r^*(X_3\cup X_4)-4r(M)\\
&\qquad +9+9+9+9+4+4+4+4 \\
& = r^*(X_1\cup X_3)+r^*(X_2\cup X_3)+r^*(X_3\cup X_4)-4r(M)+52 \\
\end{split}
\end{equation*}

We thus must have that
\begin{equation*}
\begin{split}
&\qquad r^*(X_3)+r^*(X_1\cup X_3\cup X_4)+r^*(X_2\cup X_3\cup X_5)\\
& > r^*(X_1\cup X_3)+r^*(X_2\cup X_3)+r^*(X_3\cup X_4)+2
\end{split}
\end{equation*}
in order for \vect{X}{5} to be a bad family.

Suppose $Z=\varnothing$, i.e. $X_3=V_2$. We then have $2+9+9>8+5+8+2$ which is untrue. Suppose we increase the size of $Z$. If the cardinality of one of these unions does not change, then neither does the rank of the complement, and thus the corank is unchanged. Note that $X_1\cup X_3\cup X_5=V_1\cup V_2\cup V_j\cup V_k\cup Z$. If the cardinality of this set is greater when $Z$ is non-empty, then $Z\subseteq V_i$. As $\ol{X_1\cup X_3\cup X_5}=V_i-Z$, this will cause the rank of $\ol{X_1\cup X_3\cup X_5}$ to decrease. The corank is thus unchanged from when $Z=\varnothing$. A similar argument shows that the corank of $X_2\cup X_3\cup X_4=V_1\cup V_2\cup V_i\cup V_j\cup Z$ must be unchanged as its complement $V_k-Z$ is independent. Also, 
\begin{flalign*}
r^*(X_3\cup X_4)+2 & \geq r^*(X_3)+r^*(X_4)-r^*(X_3\cap X_4)+2 &\\
& \geq r^*(X_3)+r^*(X_4)-r^*(Z\cap X_4)+2 &\\
& \geq r^*(X_3)+2 &\\
& \geq r^*(X_3) &
\end{flalign*}
Thus increasing the size of $Z$ can only cause the left-hand side to decrease by more than the right-hand side, and so we cannot have a bad family for any choice of $Z$.

\begin{itemize}[Subcase 1.1b:] 
\item $ X_1=V_j\cup V_k\cup\{a\}$ where $a\in V_i$\\
$X_2=V_i$\\
$X_3=V_2\cup Z$ where $Z\subseteq E(M)$\\
$X_4=V_1\cup V_j$\\
$X_5= V_1\cup V_k$
\end{itemize}
The only terms whose value changes from before is $|X_1\cup X_5|$ which increases by one. So we must have that
\begin{equation*}
\begin{split}
& \qquad r^*(X_3)+r^*(X_1\cup X_3\cup X_4)+r^*(X_2\cup X_3\cup X_4)\\
& > r^*(X_1\cup X_3)+r^*(X_2\cup X_3)+r^*(X_3\cup X_4)+3
\end{split}
\end{equation*}
The same argument as before shows that we do not have a bad family, for any choice of $Z$.
\begin{itemize}[Subcase 1c:] 
\item $ X_1=V_j\cup V_k\cup\{a,b\}$ where $a,b\in V_i$\\
$X_2=V_i$\\
$X_3=V_2\cup Z$ where $Z\subseteq E(M)$\\
$X_4=V_1\cup V_j$\\
$X_5= V_1\cup V_k$
\end{itemize}
Here, $|X_1\cup X_5|$ is one higher than in the previous subcase, while $r(\ol{X_1\cup X_5})$ is one lower, and the inequality is thus identical to subcase 1b.

\pagebreak

{\bf Subcase 1.2.}

\begin{figure}[ht]
\centering
\begin{tikzpicture}[thin,line join=round]
\node[ellipse, draw] (x1) at (0,3.5) {$X_1$};
\node[ellipse, draw] (x2) at (0,0) {$X_2$};
\node[ellipse, draw] (x3) at (3.5,0) {$X_3$};
\node[ellipse, draw] (x4) at (0,-3.5) {$X_4$};
\node[ellipse, draw] (x5) at (-3.5,0) {$X_5$};
\draw (x1) -- node[rotate=45, above] {$V_1 \cup V_i \cup V_j$} (x5) -- node[rotate=-45, below] {$V_1 \cup V_j \cup V_k$} (x4) -- (x2) -- node[above] {$V_1 \cup V_i \cup V_k$} (x5);
\draw[dashed] (x1) -- (x2);
\end{tikzpicture}
\end{figure}

Again, we have by assumption that $V_2\subseteq X_3$. $X_3$ may contain other elements of the ground set of $M$, thus any terms in the inequality involving $X_3$ will not be immediately evaluated. $X_5$ must be equal to $V_1$, as these are the only common elements of the three circuit-hyperplanes. This forces $X_1$ to be equal to $V_i\cup V_j$, as $X_1\cup X_2=V_3\cup V_4\cup V_5$ and thus $X_1$ cannot contain elements of $V_1$. Likewise, $X_2$ must be equal to $V_i\cup V_k$. $X_4$ must contain $V_j\cup V_k$. As we assumed the sets $X_i$ to be flats, $X_4$ could also include one or three elements of $V_1$. It cannot contain exactly two elements of $V_1$, as $V_1\cup V_j\cup V_k$ is a circuit-hyperplane. This gives the following three subcases, which differ only in elements of $X_4$. As before, in each subcase we will begin by evaluating terms of the left-hand and right-hand sides of inequality $5$, then show that the left-hand side must be lower in value, meaning that the inequality holds.

\begin{itemize}[Subcase 1.2a:] 
\item $ X_1=V_i\cup V_j$ \\
$X_2=V_i\cup V_k$\\
$X_3=V_2\cup Z$ where $Z\subseteq E(M)$\\
$X_4=V_j\cup V_k$\\
$X_5= V_1$
\end{itemize}

Note that $\ol{X_1\cup X_2}=V_1\cup V_2$ and $\ol{X_2\cup X_4\cup X_5}=\ol{V_1\cup V_i\cup V_j\cup V_k}=V_2$.
\begin{equation*}
\begin{split}
LHS & = r^*(X_3)+r^*(X_4)+r^*(X_5)+r^*(X_1\cup X_2)+r^*(X_1\cup X_3\cup X_5)\\&\qquad+r^*(X_2\cup X_3\cup X_4)+r^*(X_2\cup X_4\cup X_5) \\
& = r^*(X_3)+r^*(X_1\cup X_3\cup X_5)+r^*(X_2\cup X_3\cup X_4)\\
&\qquad+|X_4| +|X_5|+|X_1\cup X_2|+|X_2\cup X_4\cup X_5| \\
&\qquad + r(\ol{X_4})+r(\ol{X_5}) + r(\ol{X_1 \cup X_2})+ r(\ol{X_2 \cup X_4\cup X_5})-4r(M) \\
& = r^*(X_3)+r^*(X_1\cup X_3\cup X_5)+r^*(X_2\cup X_3\cup X_4)-4r(M)\\
&\qquad+6+3+9+12+5+5+5+2\\
& = r^*(X_3)+r^*(X_1\cup X_3\cup X_5)+r^*(X_2\cup X_3\cup X_4)-4r(M)+47\\
\end{split}
\end{equation*}
\begin{equation*}
\begin{split}
RHS & = r^*(X_1\cup X_3)+r^*(X_1\cup X_5)+r^*(X_2\cup X_3)+r^*(X_2\cup X_4)\\
&\qquad+r^*(X_2\cup X_5)+r^*(X_3\cup X_4)+r^*(X_4\cup X_5) \\
& = r^*(X_1\cup X_3)+r^*(X_2\cup X_3)+r^*(X_3\cup X_4)+|X_1\cup X_5| \\
&\qquad+|X_2\cup X_4|+|X_2\cup X_5|+|X_4\cup X_5|+r(\ol{X_1\cup X_5})\\
&\qquad+r(\ol{X_2\cup X_4})+r(\ol{X_2\cup X_5})+r(\ol{X_4\cup X_5})-4r(M) \\
& \geq r^*(X_1\cup X_3)+r^*(X_2\cup X_3)+r^*(X_3\cup X_4)-4r(M)\\
&\qquad+ 9+9+9+9+4+5+4+4 \\
& = r^*(X_1\cup X_3)+r^*(X_2\cup X_3)+r^*(X_3\cup X_4)-4r(M)+53 \\
\end{split}
\end{equation*}

We thus must have that 
\begin{equation*}
\begin{split}
& \qquad r^*(X_3)+r^*(X_1\cup X_3\cup X_5)+r^*(X_2\cup X_3\cup X_4)\\
& > r^*(X_1\cup X_3)+r^*(X_2\cup X_3)+r^*(X_3\cup X_4)+6
\end{split}
\end{equation*}
Suppose $Z=\varnothing$. Then $2+9+9>8+8+8+6$ which is untrue. The same argument as in subcase 1a shows that if $Z$ is non-empty, we still cannot have a bad family.

\begin{itemize}[Subcase 1.2b:] 
\item $ X_1=V_i\cup V_j$ \\
$X_2=V_i\cup V_k$\\
$X_3=V_2\cup Z$ where $Z\subseteq E(M)$\\
$X_4=V_j\cup V_k\cup\{a\}$ where $a\in V_1$\\
$X_5= V_1$
\end{itemize}

Compared to subcase 2a, $|X_4|$ increases by one on the left-hand side. On the right-hand side, $|X_2\cup X_4|$ increases by one. The overall inequality is unchanged, so the same argument as in subcase 2a holds.

\begin{itemize}[Subcase 1.2c:] 
\item $ X_1=V_i\cup V_j$ \\
$X_2=V_i\cup V_k$\\
$X_3=V_2\cup Z$ where $Z\subseteq E(M)$\\
$X_4=V_j\cup V_k\cup V_i$\\
$X_5= V_1$
\end{itemize}

Compared to subcase 2a, both $|X_4|$ and $|X_2\cup X_4|$ increase by three, leaving the inequality unchanged.

{\bf Subcase 1.3.}

\begin{figure}[ht]
\centering
\begin{tikzpicture}[thin,line join=round]
\node[ellipse, draw] (x1) at (0,3.5) {$X_1$};
\node[ellipse, draw] (x2) at (0,0) {$X_2$};
\node[ellipse, draw] (x3) at (3.5,0) {$X_3$};
\node[ellipse, draw] (x4) at (0,-3.5) {$X_4$};
\node[ellipse, draw] (x5) at (-3.5,0) {$X_5$};
\draw (x1) -- (x5) -- node[rotate=-45, below] {$V_1 \cup V_i \cup V_k$} (x4) -- node[rotate=-90, above] {$V_1 \cup V_j \cup V_k$} (x2) -- node[above] {$V_1 \cup V_i \cup V_j$} (x5);
\draw[dashed] (x1) -- (x2);
\end{tikzpicture}
\end{figure}

Again, we have by assumption that $V_2\subseteq X_3$. $X_3$ may contain other elements of the ground set of $M$, thus any terms in the inequality involving $X_3$ will not be immediately evaluated. Recall that $X_1\cup X_2=V_3\cup V_4\cup V_5$. This means that $V_1$ can only be a subset of $X_4$ or $X_5$. In order for the three circuit-hyperplanes to lie as indicated, both $X_4$ and $X_5$ must contain $V_1$. $X_2$ must be equal to $V_j$, as this forms the common values of the two circuit-hyperplanes involving $X_2$, excluding $V_1$. Finally, $X_5$ must be equal to $V_1\cup V_i$, while $X_4$ must be equal to $V_i\cup V_k$. As we assumed the sets $X_i$ to be flats, $X_1$ could also include one or two elements of $V_j$. It cannot contain three elements of $V_j$, as $V_i\cup V_j\cup V_k$ is a relaxed circuit-hyperplane in $(\kin{5}^-)^*$ and thus has closure equal to the entire ground set. This gives the following three subcases, which differ only in elements of $X_1$. As before, in each subcase we will begin by evaluating terms of the left-hand and right-hand sides of inequality $5$, then show that the left-hand side must be lower in value, meaning that the inequality holds.

\begin{itemize}[Subcase 1.3a:] 
\item $ X_1=V_i\cup V_k$\\
$X_2=V_j$\\
$X_3=V_2\cup Z$ where $Z\subseteq E(M)$\\
$X_4=V_1\cup V_k$\\
$X_5= V_1\cup V_i$
\end{itemize}

Note that $\ol{X_1\cup X_2}=V_1\cup V_2$ and $\ol{X_2\cup X_4\cup X_5}=\ol{V_1\cup V_i\cup V_j\cup V_k}=V_2$.
\begin{equation*}
\begin{split}
LHS & = r^*(X_3)+r^*(X_4)+r^*(X_5)+r^*(X_1\cup X_2)+r^*(X_1\cup X_3\cup X_5)\\&\qquad+r^*(X_2\cup X_3\cup X_4)+r^*(X_2\cup X_4\cup X_5) \\
& = r^*(X_3)+r^*(X_1\cup X_3\cup X_5)+r^*(X_2\cup X_3\cup X_4) \\
&\qquad +|X_4|+|X_5|+|X_1\cup X_2|+|X_2\cup X_4\cup X_5|\\
&\qquad+ r(\ol{X_4})+r(\ol{X_5}) + r(\ol{X_1 \cup X_2})+ r(\ol{X_2 \cup X_4\cup X_5})-4r(M)\\
& = r^*(X_3)+r^*(X_1\cup X_3\cup X_5)+r^*(X_2\cup X_3\cup X_4)-4r(M)\\
&\qquad+6+6+9+12+5+5+5+2\\
& = r^*(X_3)+r^*(X_1\cup X_3\cup X_5)+r^*(X_2\cup X_3\cup X_4)-4r(M)+50\\
\end{split}
\end{equation*}
\begin{equation*}
\begin{split}
RHS & = r^*(X_1\cup X_3)+r^*(X_1\cup X_5)+r^*(X_2\cup X_3)+r^*(X_2\cup X_4)\\
&\qquad+r^*(X_2\cup X_5)+r^*(X_3\cup X_4)+r^*(X_4\cup X_5) \\
& = r^*(X_1\cup X_3)+r^*(X_2\cup X_3)+r^*(X_3\cup X_4)+|X_1\cup X_5|\\
&\qquad+|X_2\cup X_4|+|X_2\cup X_5|+|X_4\cup X_5|+r(\ol{X_1\cup X_5}) \\
&\qquad+r(\ol{X_2\cup X_4})+r(\ol{X_2\cup X_5})+r(\ol{X_4\cup X_5}) -4r(M)\\
& \geq r^*(X_1\cup X_3)+r^*(X_2\cup X_3)+r^*(X_3\cup X_4)-4r(M)\\
&\qquad+9+9+9+9+4+4+4+4 \\
& = r^*(X_1\cup X_3)+r^*(X_2\cup X_3)+r^*(X_3\cup X_4)-4r(M)+52\\
\end{split}
\end{equation*}

We thus must have that
\begin{equation*}
\begin{split}
&\qquad r^*(X_3)+r^*(X_1\cup X_3\cup X_5)+r^*(X_2\cup X_3\cup X_4)\\
&> r^*(X_1\cup X_3)+r^*(X_2\cup X_3)+r^*(X_3\cup X_4)+2
\end{split}
\end{equation*}
Suppose $Z=\varnothing$. Then we have $2+9+9>8+5+8+2$ which is untrue. If we increase the size of $Z$, the same argument as in subcase 1a shows that we still cannot have a bad family.

\begin{itemize}[Subcase 1.3b:] 
\item $ X_1=V_i\cup V_k\cup\{a\}$ where $a\in V_j$\\
$X_2=V_j$\\
$X_3=V_2\cup Z$ where $Z\subseteq E(M)$\\
$X_4=V_1\cup V_k$\\
$X_5= V_1\cup V_i$
\end{itemize}

Compared to subcase 3a, $|X_1\cup X_5|$ increases by one, and the same argument still holds.

\begin{itemize}[Subcase 1.3c:] 
\item $ X_1=V_i\cup V_k\cup\{a,b\}$ where $a,b\in V_j$\\
$X_2=V_j$\\
$X_3=V_2\cup Z$ where $Z\subseteq E(M)$\\
$X_4=V_1\cup V_k$\\
$X_5= V_1\cup V_i$
\end{itemize}

Compared to subcase 3a, $|X_1\cup X_5|$ increases by two, while $r(\ol{X_1\cup X_5})$ falls by one. The right-hand side of the inequality increases by one, and the same argument as before holds.

\pagebreak

{\bf Subcase 1.4.}

\begin{figure}[ht]
\centering
\begin{tikzpicture}[thin,line join=round]
\node[ellipse, draw] (x1) at (0,3.5) {$X_1$};
\node[ellipse, draw] (x2) at (0,0) {$X_2$};
\node[ellipse, draw] (x3) at (3.5,0) {$X_3$};
\node[ellipse, draw] (x4) at (0,-3.5) {$X_4$};
\node[ellipse, draw] (x5) at (-3.5,0) {$X_5$};
\draw (x1) -- node[rotate=45, above] {$V_1 \cup V_i \cup V_k$} (x5) -- node[rotate=-45, below] {$V_1 \cup V_i \cup V_j$} (x4) -- node[rotate=-90, above] {$V_1 \cup V_j \cup V_k$} (x2) -- (x5);
\draw[dashed] (x1) -- (x2);
\end{tikzpicture}
\end{figure}

Again, we have by assumption that $V_2\subseteq X_3$. $X_3$ may contain other elements of the ground set of $M$, thus any terms in the inequality involving $X_3$ will not be immediately evaluated. Recall that $X_1\cup X_2=V_3\cup V_4\cup V_5$. This means that $V_1$ can only be a subset of $X_4$ or $X_5$. In order for the three circuit-hyperplanes to lie as indicated, both $X_4$ and $X_5$ must contain $V_1$. As $X_2\cup X_4$ cannot contain $V_i$, $X_5$ must be equal to $V_1\cup V_i$. Likewise, $X_2$ must contain $V_k$. This forces $X_4$ to contain $V_j$, and $X_1$ to contain $V_i$. Finally, in order for $X_1\cup X_2=V_3\cup V_4\cup V_5$, it must be that $X_2$ is equal to $V_j\cup V_k$.

$X_2$ must be equal to $V_j\cup V_k$, as this forms the common values of the two circuit-hyperplanes involving $X_2$, excluding $V_1$. Finally, $X_5$ must be equal to $V_1\cup V_i$, while $X_4$ must contain $V_k$. As we assumed the sets $X_i$ to be flats, $X_4$ could also include one or two elements of $V_j$. It cannot contain all three elements, as $V_i\cup V_j\cup V_k$ is a relaxed circuit-hyperplane, and thus has closure equal to the entire ground set. This gives the following three subcases, which differ only in elements of $X_4$. As before, in each subcase we will begin by evaluating terms of the left-hand and right-hand sides of inequality $5$, then show that the left-hand side must be lower in value, meaning that the inequality holds.

\begin{itemize}[\hspace{20 mm}]
\item $ X_1=V_i\cup V_k$\\
$X_2=V_j\cup V_k$\\
$X_3=V_2\cup Z$ where $Z\subseteq E(M)$\\
$X_4=V_1\cup V_j$\\
$X_5= V_1\cup Vi$
\end{itemize}

Note that $\ol{X_1\cup X_2}=V_1\cup V_2$ and $\ol{X_2\cup X_4\cup X_5}=\ol{V_1\cup V_i\cup V_j\cup V_k}=V_2$.
\begin{equation*}
\begin{split}
LHS & = r^*(X_3)+r^*(X_4)+r^*(X_5)+r^*(X_1\cup X_2)+r^*(X_1\cup X_3\cup X_5)\\&\qquad+r^*(X_2\cup X_3\cup X_4)+r^*(X_2\cup X_4\cup X_5) \\
& = r^*(X_3)+r^*(X_1\cup X_3\cup X_5)+r^*(X_2\cup X_3\cup X_4) \\
&\qquad+|X_4|+|X_5|+|X_1\cup X_2|+|X_2\cup X_4\cup X_5|\\
&\qquad + r(\ol{X_4})+r(\ol{X_5}) + r(\ol{X_1 \cup X_2})+ r(\ol{X_2 \cup X_4\cup X_5})-4r(M)\\
& = r^*(X_3)+r^*(X_1\cup X_3\cup X_5)+r^*(X_2\cup X_3\cup X_4)-4r(M)\\
&\qquad+6+6+9+12+5+5+5+2 \\
& = r^*(X_3)+r^*(X_1\cup X_3\cup X_5)+r^*(X_2\cup X_3\cup X_4)-4r(M)+50\\
\end{split}
\end{equation*}
\begin{equation*}
\begin{split}
RHS & = r^*(X_1\cup X_3)+r^*(X_1\cup X_5)+r^*(X_2\cup X_3)+r^*(X_2\cup X_4)\\
&\qquad+r^*(X_2\cup X_5)+r^*(X_3\cup X_4)+r^*(X_4\cup X_5) \\
& = r^*(X_1\cup X_3)+r^*(X_2\cup X_3)+r^*(X_3\cup X_4)+|X_1\cup X_5| \\
&\qquad+|X_2\cup X_4|+|X_2\cup X_5|+|X_4\cup X_5|+r(\ol{X_1\cup X_5})\\
&\qquad+r(\ol{X_2\cup X_4})+r(\ol{X_2\cup X_5})+r(\ol{X_4\cup X_5})-4r(M) \\
& \geq r^*(X_1\cup X_3)+r^*(X_2\cup X_3)+r^*(X_3\cup X_4)-4r(M)\\
&\qquad+9+9+9+9+4+4+2+4\\
& = r^*(X_1\cup X_3)+r^*(X_2\cup X_3)+r^*(X_3\cup X_4)-4r(M)+50\\
\end{split}
\end{equation*}

We thus must have that 
\begin{equation*}
\begin{split}
& \qquad r^*(X_3)+r^*(X_1\cup X_3\cup X_5)+r^*(X_2\cup X_3\cup X_4)\\
& > r^*(X_1\cup X_3)+r^*(X_2\cup X_3)+r^*(X_3\cup X_4)
\end{split}
\end{equation*}
Suppose $Z=\varnothing$. Then we have $2+9+9>8+8+8$ which is untrue. If we increase the size of $Z$, the same argument as in subcase 1a shows that we still cannot have a bad family.
\end{case1}

\newtheorem*{case3}{Case 2}
\begin{case3}
$X_4\cap V_2\neq\varnothing$\\
As in Case 1, we will consider which term on the left-hand side dual inequality must be $V_3\cup V_4\cup V_5$. As this term does not contain any elements of $V_2$ and these elements are contained in $X_4$, the term cannot involve $X_4$. We now have the following representation of the locations remaining as possibilities for all necessary circuit-hyperplanes, where the dashed line again indicates a term which falls on the left-hand side of the inequality.

\begin{figure}[ht]
\centering
\begin{tikzpicture}[thin,line join=round]
\node[ellipse, draw] (x1) at (0,3.5) {$X_1$};
\node[ellipse, draw] (x2) at (0,0) {$X_2$};
\node[ellipse, draw] (x3) at (3.5,0) {$X_3$};
\node[ellipse, draw] (x4) at (0,-3.5) {$X_4$};
\node[ellipse, draw] (x5) at (-3.5,0) {$X_5$};
\draw (x1) -- (x3) -- (x2) -- (x5) -- (x1);
\draw[dashed] (x1) -- (x2);
\end{tikzpicture}
\end{figure}

If it $V_3\cup V_4\cup V_5$ is equal to $X_3$ or $X_5$ then any term involving these sets cannot be one of the circuit-hyperplanes $V_1\cup X_4\cup V_5$, $V_1\cup V_3\cup V_5$. When $V_3\cup V_4\cup V_5=X_3$, the possible terms left are $X_1\cup X_5$ and $X_2\cup X_5$. When $V_3\cup V_4\cup V_5=X_5$, the possible terms left are $X_1\cup X_3$ and $X_2\cup X_3$. In both cases there are not enough terms on the right-hand side for the three circuit-hyperplanes. Thus $V_3\cup V_4\cup V_5=X_1\cup X_2$. As $X_3$ and $X_5$ can be switched with no effect on the inequality, we now have, up to symmetry, two possibilities of where the circuit-hyperplanes lie, indicated by the bold lines in the following diagrams.

\pagebreak

\begin{figure}
\centering
\subfloat[Subcase 1]{
\begin{tikzpicture}[thin,line join=round]
\node[ellipse, draw] (x1) at (0,2.5) {$X_1$};
\node[ellipse, draw] (x2) at (0,0) {$X_2$};
\node[ellipse, draw] (x3) at (2.5,0) {$X_3$};
\node[ellipse, draw] (x4) at (0,-2.5) {$X_4$};
\node[ellipse, draw] (x5) at (-2.5,0) {$X_5$};
\draw[ultra thick] (x1) -- (x3) -- (x2) -- (x5);
\draw (x1) -- (x5);
\draw[dashed] (x1) -- (x2);
\end{tikzpicture}
}
\qquad
\subfloat[Subcase 2]{
\begin{tikzpicture}[thin,line join=round]
\node[ellipse, draw] (x1) at (0,2.5) {$X_1$};
\node[ellipse, draw] (x2) at (0,0) {$X_2$};
\node[ellipse, draw] (x3) at (2.5,0) {$X_3$};
\node[ellipse, draw] (x4) at (0,-2.5) {$X_4$};
\node[ellipse, draw] (x5) at (-2.5,0) {$X_5$};
\draw[ultra thick] (x5) -- (x1) -- (x3) -- (x2);
\draw (x2) -- (x5);
\draw[dashed] (x1) -- (x2);
\end{tikzpicture}
}
\end{figure}

We will again consider both of these subcases, with circuit-hyperplanes lying as shown in the diagrams, which give multiple options within each subcase for the choice of sets $X_1,\ldots,X_5$.

{\bf Subcase 2.1.}

\begin{figure}[ht]
\centering
\begin{tikzpicture}[thin,line join=round]
\node[ellipse, draw] (x1) at (0,3.5) {$X_1$};
\node[ellipse, draw] (x2) at (0,0) {$X_2$};
\node[ellipse, draw] (x3) at (3.5,0) {$X_3$};
\node[ellipse, draw] (x4) at (0,-3.5) {$X_4$};
\node[ellipse, draw] (x5) at (-3.5,0) {$X_5$};
\draw (x1) -- node[rotate=-45,above] {$V_1 \cup V_j \cup V_k$} (x3) -- node[above] {$V_1 \cup V_i \cup V_j$} (x2) -- node[above] {$V_1 \cup V_i \cup V_k$} (x5) -- node[rotate=45,above] {} (x1);
\draw[dashed] (x1) -- (x2);
\end{tikzpicture}
\end{figure}

Here, we have assumed that $V_2\subseteq X_4$. $X_4$ may contain other elements of the ground set of $M$, thus any terms in the inequality involving $X_4$ will not be immediately evaluated. Recall that $X_1\cup X_2=V_3\cup V_4\cup V_5$. This means that $V_1$ can only be a subset of $X_3$ or $X_5$. In order for the three circuit-hyperplanes to lie as indicated, both $X_3$ and $X_5$ must contain $V_1$. $X_2$ must be equal to $V_i$, as this forms the common values of the two circuit-hyperplanes involving $X_2$, excluding $V_1$. This forces $X_1$ to be equal to $V_j\cup V_k$. Note that $X_1$ cannot contain any elements of $V_i$ as $X_1\cup X_3$ does not. Also, $X_3$ must contain $V_j$ and $X_5$ must contain $V_k$. $X_3$ must be equal to $V_1\cup V_j$. As we assumed the sets $X_i$ to be flats, $X_5$ could also include one or three elements of $V_i$. $X_5$ cannot contain exactly two elements of $V_i$ as $V_1\cup V_k\cup V_i$ is a circuit-hyperplane in the dual.This gives the following three subcases, which differ only in elements of $X_5$.

\begin{itemize}[Subcase 2.1a:] 
\item $ X_1=V_j\cup V_k$ \\
$X_2=V_i$\\
$X_3=V_1\cup V_j$\\
$X_4=V_2\cup Z$ where $Z\subseteq E(M)$\\
$X_5= V_1\cup V_k$
\end{itemize}

Note that $\ol{X_1\cup X_2}=V_1\cup V_2$ and $\ol{X_1\cup X_3\cup X_5}=\ol{V_1\cup V_j\cup V_k}=V_2\cup V_i$.
\begin{equation*}
\begin{split}
LHS & = r^*(X_3)+r^*(X_4)+r^*(X_5)+r^*(X_1\cup X_2)+r^*(X_1\cup X_3\cup X_5)\\
&\qquad+r^*(X_2\cup X_3\cup X_4)+r^*(X_2\cup X_4\cup X_5) \\
& = r^*(X_4)+r^*(X_2\cup X_3\cup X_4)+r^*(X_2\cup X_4\cup X_5) \\
&\qquad +|X_3|+|X_5|+|X_1\cup X_2|+|X_1\cup X_3\cup X_5|\\
&\qquad+r(\ol{X_3})+r(\ol{X_5})+r(\ol{X_1\cup X_2})+r(\ol{X_1\cup X_3\cup X_5})-4r(M) \\
& = r^*(X_4)+r^*(X_2\cup X_3\cup X_4)+r^*(X_2\cup X_4\cup X_5)-4r(M)\\
&\qquad+6+6+9+9+5+5+5+4\\
& = r^*(X_4)+r^*(X_2\cup X_3\cup X_4)+r^*(X_2\cup X_4\cup X_5)-4r(M)+49\\
\end{split}
\end{equation*}
\begin{equation*}
\begin{split}
RHS & = r^*(X_1\cup X_3)+r^*(X_1\cup X_5)+r^*(X_2\cup X_3)+r^*(X_2\cup X_4)\\
&\qquad+r^*(X_2\cup X_5)+r^*(X_3\cup X_4)+r^*(X_4\cup X_5) \\
& = r^*(X_2\cup X_4)+r^*(X_3\cup X_4)+r^*(X_4\cup X_5)+|X_1\cup X_3| \\
&\qquad+|X_1\cup X_5|+|X_2\cup X_3|+|X_2\cup X_5|+r(\ol{X_1\cup X_3})\\
&\qquad +r(\ol{X_1\cup X_5})+r(\ol{X_2\cup X_3})+r(\ol{X_2\cup X_5})-4r(M) \\
& = r^*(X_2\cup X_4)+r^*(X_3\cup X_4)+r^*(X_4\cup X_5)-4r(M)\\
&\qquad+9+9+9+9+4+4+4+4 \\
& = r^*(X_2\cup X_4)+r^*(X_3\cup X_4)+r^*(X_4\cup X_5)-4r(M)+52\\
\end{split}
\end{equation*}

We thus must have that 
\begin{equation*}
\begin{split}
& \qquad r^*(X_4)+r^*(X_2\cup X_3\cup X_4)+r^*(X_2\cup X_4\cup X_5)\\
&> r^*(X_2\cup X_4)+r^*(X_3\cup X_4)+r^*(X_4\cup X_5)+3
\end{split}
\end{equation*}
Suppose $Z=\varnothing$. Then we have that $2+8+8>5+8+8+3$ which is untrue. Now suppose $Z$ is non-empty. $X_2\cup X_3\cup X_4=V_1\cup V_2\cup V_i\cup V_j\cup Z$ and $\ol{X_2\cup X_3\cup X_4}=V_k-Z$, so if $|X_2\cup X_3\cup X_4|$ increases, then $Z\subseteq V_k$, and $r(\ol{X_2\cup X_3\cup X_4})$ must decrease by the same amount. Thus $r^*(X_2\cup X_3\cup X_4)$ remains unchanged. Likewise, $r^*(X_2\cup X_4\cup X_5)$ cannot change, as $X_2\cup X_4\cup X_5=V_1\cup V_2\cup V_i\cup V_j\cup Z$ and has complement $V_k-Z$. Finally, $r^*(X_4)$ must be no greater than $r^*(X_4\cup X_5)$. We thus do not have a bad family.

\begin{itemize}[Subcase 2.1b:] 
\item $ X_1=V_j\cup V_k$ \\
$X_2=V_i$\\
$X_3=V_1\cup V_j$\\
$X_4=V_2\cup Z$ where $Z\subseteq E(M)$\\
$X_5= V_1\cup V_k\cup\{a\}$ where $a\in V_i$
\end{itemize}

On the left-hand side, $|X_5|$ and $|X_1\cup X_3\cup X_5|$ both increase in size by two. On the right-hand side, $|X_1\cup X_5|$ increases in size by one. The same argument as in Subcase 1a thus shows there can be no bad family regardless of $Z$.

\begin{itemize}[Subcase 2.1c:] 
\item $ X_1=V_j\cup V_k$ \\
$X_2=V_i$\\
$X_3=V_1\cup V_j$\\
$X_4=V_2\cup Z$ where $Z\subseteq E(M)$\\
$X_5= V_1\cup V_k\cup\{a,b\}$ where $a,b\in V_i$
\end{itemize}

On the left-hand side, $|X_5|$ and $|X_1\cup X_3\cup X_5|$ both increase in size by two, while $r(X_1\cup X_3\cup X_5)$ falls by one. On the right-hand side, $|X_1\cup X_5|$ increases in size by two while $r(X_1\cup X_5)$ falls by one. We still have no bad family.

\pagebreak

{\bf Subcase 2.2.}

\begin{figure}[ht]
\centering
\begin{tikzpicture}[thin,line join=round]
\node[ellipse, draw] (x1) at (0,3.5) {$X_1$};
\node[ellipse, draw] (x2) at (0,0) {$X_2$};
\node[ellipse, draw] (x3) at (3.5,0) {$X_3$};
\node[ellipse, draw] (x4) at (0,-3.5) {$X_4$};
\node[ellipse, draw] (x5) at (-3.5,0) {$X_5$};
\draw (x1) -- node[rotate=-45,above] {$V_1 \cup V_i \cup V_j$} (x3) -- node[above] {$V_1 \cup V_j \cup V_k$} (x2) -- node[above] {} (x5) -- node[rotate=45,above] {$V_1 \cup V_i \cup V_k$} (x1);
\draw[dashed] (x1) -- (x2);
\end{tikzpicture}
\end{figure}

Again, we have assumed that $V_2\subseteq X_4$. $X_4$ may contain other elements of the ground set of $M$, thus any terms in the inequality involving $X_4$ will not be immediately evaluated. Recall that $X_1\cup X_2=V_3\cup V_4\cup V_5$. This means that $V_1$ can only be a subset of $X_3$ or $X_5$. In order for the three circuit-hyperplanes to lie as indicated, both $X_3$ and $X_5$ must contain $V_1$. $X_1$ must be equal to $V_i$, as this forms the common values of the two circuit-hyperplanes involving $X_2$, excluding $V_1$. This forces $X_2$ to be equal to $V_j\cup V_k$, forces $X_3$ to be equal to $V_1\cup V_j$, and forces $X_5$ to contain $V_1\cup V_k$. As we assumed the sets $X_i$ to be flats, $X_5$ could also include one or three elements of $V_i$. $X_5$ cannot contain exactly two elements of $V_i$ as $V_1\cup V_k\cup V_i$ is a circuit-hyperplane in the dual.This gives the following three subcases, which differ only in elements of $X_5$.

\begin{itemize}[Subcase 2.2a:] 
\item $ X_1=V_i$ \\
$X_2=V_j\cup V_k$\\
$X_3=V_1\cup V_j$\\
$X_4=V_2\cup Z$ where $Z\subseteq E(M)$\\
$X_5= V_1\cup V_k$
\end{itemize}

Note that $\ol{X_1\cup X_2}=V_1\cup V_2$ and $\ol{X_1\cup X_3\cup X_5}=\ol{V_1\cup V_i\cup V_j\cup V_k}=V_2$.
\begin{equation*}
\begin{split}
LHS & = r^*(X_3)+r^*(X_4)+r^*(X_5)+r^*(X_1\cup X_2)+r^*(X_1\cup X_3\cup X_5)\\&
\qquad+r^*(X_2\cup X_3\cup X_4)+r^*(X_2\cup X_4\cup X_5) \\
& = r^*(X_4)+r^*(X_2\cup X_3\cup X_4)+r^*(X_2\cup X_4\cup X_5) \\
&\qquad +|X_3|+|X_5|+|X_1\cup X_2|+|X_1\cup X_3\cup X_5|\\
&\qquad+r(\ol{X_3})+r(\ol{X_5})+r(\ol{X_1\cup X_2})+r(\ol{X_1\cup X_3\cup X_5})-4r(M) \\
& = r^*(X_4)+r^*(X_2\cup X_3\cup X_4)+r^*(X_2\cup X_4\cup X_5)-4r(M)\\
&\qquad+6+6+9+12+5+5+5+2\\
& = r^*(X_4)+r^*(X_2\cup X_3\cup X_4)+r^*(X_2\cup X_4\cup X_5)-4r(M)+50\\
\end{split}
\end{equation*}
\begin{equation*}
\begin{split}
RHS & = r^*(X_1\cup X_3)+r^*(X_1\cup X_5)+r^*(X_2\cup X_3)+r^*(X_2\cup X_4)\\
&\qquad+r^*(X_2\cup X_5)+r^*(X_3\cup X_4)+r^*(X_4\cup X_5) \\
& = r^*(X_2\cup X_4)+r^*(X_3\cup X_4)+r^*(X_4\cup X_5)+|X_1\cup X_3| \\
&\qquad +|X_1\cup X_5|+|X_2\cup X_3|+|X_2\cup X_5|+r(\ol{X_1\cup X_3})\\
&\qquad+r(\ol{X_1\cup X_5})+r(\ol{X_2\cup X_3})+r(\ol{X_2\cup X_5})-4r(M) \\
& = r^*(X_2\cup X_4)+r^*(X_3\cup X_4)+r^*(X_4\cup X_5)-4r(M)\\
&\qquad+9+9+9+9+4+4+4+4 \\
& = r^*(X_2\cup X_4)+r^*(X_3\cup X_4)+r^*(X_4\cup X_5)-4r(M)+52\\
\end{split}
\end{equation*}

We thus must have that 
\begin{equation*}
\begin{split}
&\qquad r^*(X_4)+r^*(X_2\cup X_3\cup X_4)+r^*(X_2\cup X_4\cup X_5)\\
& > r^*(X_2\cup X_4)+r^*(X_3\cup X_4)+r^*(X_4\cup X_5)+2
\end{split}
\end{equation*}
Suppose $Z=\varnothing$. Then we have that $2+8+8>8+8+8+2$ which is untrue. Now suppose $Z$ is non-empty. The same argument from Subcase 1a shows we still cannot have a bad family.

\begin{itemize}[Subcase 2.2b:] 
\item $ X_1=V_i$ \\
$X_2=V_j\cup V_k$\\
$X_3=V_1\cup V_j$\\
$X_4=V_2\cup Z$ where $Z\subseteq E(M)$\\
$X_5= V_1\cup V_k\cup \{a\}$ where $a\in V_i$
\end{itemize}

On the left-hand side, $|X_5|$ increases by one. On the right-hand side, $|X_2\cup X_5|$ increases by one. We still have no bad family.

\begin{itemize}[Subcase 2.2c:] 
\item $ X_1=V_i$ \\
$X_2=V_j\cup V_k$\\
$X_3=V_1\cup V_j$\\\
$X_4=V_2\cup Z$ where $Z\subseteq E(M)$\\
$X_5= V_1\cup V_k\cup \{a,b\}$ where $a,b\in V_i$
\end{itemize}

Both $|X_5|$ and $|X_2\cup X_5|$ increase by two, while $r(\ol{X_2\cup X_5})$ falls by one. We still have no bad family.

\end{case3}

\newtheorem*{case4}{Case 3} 
\begin{case4}
$X_2\cap V_2\neq\varnothing$\\
Again consider which term on the left-hand side dual inequality must be $V_3\cup V_4\cup V_5$. As this term does not contain any elements of $V_2$ and these elements are contained in $X_2$, the term cannot involve $X_2$. We have the following representation of remaining possible locations for the necessary circuit-hyperplanes.

\begin{figure}[ht]
\centering
\begin{tikzpicture}[thin,line join=round]
\node[ellipse, draw] (x1) at (0,3.5) {$X_1$};
\node[ellipse, draw] (x2) at (0,0) {$X_2$};
\node[ellipse, draw] (x3) at (3.5,0) {$X_3$};
\node[ellipse, draw] (x4) at (0,-3.5) {$X_4$};
\node[ellipse, draw] (x5) at (-3.5,0) {$X_5$};
\draw (x1) -- (x3) -- (x4) -- (x5) -- (x1);
\end{tikzpicture}
\end{figure}

If $V_3\cup V_4\cup V_5$ is equal to $X_3$, $X_4$, or $X_5$ then any term involving these sets cannot be one of the circuit-hyperplanes $V_1\cup V_3\cup V_4$, $V_1\cup X_4\cup V_5$, $V_1\cup V_3\cup V_5$. This will not leave us with two terms on the right-hand side which could be the circuit-hyperplanes, but three are needed. If it is equal to $X_1\cup X_3\cup X_5$, then only $X_4$ can contain elements of $V_1$. As $V_1$ appears in all of the three circuit-hyperplanes needed on the right-hand side of the inequality, and there are only two terms available using $X_4$, we do not have enough terms left for the circuit-hyperplanes. This covers all terms on the left-hand side, and thus $X_2\cup V_2=\varnothing.$
\end{case4}

\newtheorem*{case5}{Case 4}
\begin{case5}
$X_1\cap V_2\neq\varnothing$\\
Consider which term on the left-hand side dual inquality must be $V_3\cup V_4\cup V_5$. It cannot be any of the terms involving $X_1$ . We have the following representation of the remaining possible locations for the necessary circuit-hyperplanes.

\begin{figure}[ht]
\centering
\begin{tikzpicture}[thin,line join=round]
\node[ellipse, draw] (x1) at (0,3.5) {$X_1$};
\node[ellipse, draw] (x2) at (0,0) {$X_2$};
\node[ellipse, draw] (x3) at (3.5,0) {$X_3$};
\node[ellipse, draw] (x4) at (0,-3.5) {$X_4$};
\node[ellipse, draw] (x5) at (-3.5,0) {$X_5$};
\draw (x4) -- (x5) -- (x2) -- (x3) -- (x4) -- (x2);
\end{tikzpicture}
\end{figure}

If $V_3\cup V_4\cup V_5$ is $X_1\cup X_3\cup X_5$, $X_2\cup X_3\cup X_4$ or $X_2\cup X_4\cup X_5$, only two sets could include elements of $V_1$. We will not have enough terms left on the right-hand side to be the circuit-hyperplanes $V_1\cup V_3\cup V_4$, $V_1\cup X_4\cup V_5$, $V_1\cup V_3\cup V_5$. If $V_3\cup V_4\cup V_5=X_4$, the circuit-hyerplanes cannot be terms using $X_4$, so the only possible edges remaining are $X_2\cup X_3$ and $X_2\cup X_5$ -- one edge less than is necessary. Thus $V_3\cup V_4\cup V_5$ must be either $X_3$ or $X_5$. These two possibilities are symmetric, so assume $V_3\cup V_4\cup V_5=X_3$. The necessary circuit-hyperplanes now cannot use $X_3$ or $X_1$. This gives only one possible assignment to the terms in the inequality, as shown in the following diagram. 

\pagebreak

\begin{figure}[ht]
\centering
\begin{tikzpicture}[thin,line join=round]
\node[ellipse, draw] (x1) at (0,3.5) {$X_1$};
\node[ellipse, draw] (x2) at (0,0) {$X_2$};
\node[ellipse, draw] (x3) at (3.5,0) {$X_3$};
\node[ellipse, draw] (x4) at (0,-3.5) {$X_4$};
\node[ellipse, draw] (x5) at (-3.5,0) {$X_5$};
\draw (x4) -- node[rotate=-45,below] {$V_1 \cup V_j \cup V_k$} (x5) -- node[above] {$V_1 \cup V_i \cup V_j$} (x2) -- node[above] {} (x3) -- node[rotate=45,below] {} (x4) -- node[rotate=-90,above] {$V_1 \cup V_i \cup V_k$} (x2);
\end{tikzpicture}
\end{figure}

We have that $X_1=V_2\cup Z$ where $Z$ is a possibly empty subset of the ground set and that $X_3=V_3\cup V_4\cup V_5$. $X_2$ must contain $V_i$ but cannot contain $V_j$ or $V_k$, $X_4$ must contain $V_k$ but cannot contain $V_i$ or $V_j$, and $X_5$ must contain $V_j$ but cannot contain $V_i$ or $V_k$. All three of these sets could also contain one of three elements from $V_1$, such that whenever we take the union of two of the sets, the union contains $V_1$. A set cannot contain two elements of $V_1$ as this would mean it was not a flat.

Note that if one of the sets $X_2$, $X_4$, or $X_5$ contains no elements of $V_1$, both the other two sets must contain all elements of $V_1$. If one of the sets contains one element of $V_1$, the other two sets must again contain all elements of $V_1$. The third possibility is that all three sets contain $V_1$.

\begin{itemize}[Subcase 4.1a:] 
\item $ X_1=V_2\cup Z$ where $Z\subseteq E(M)$ \\
$X_2= V_i\cup V_1$\\
$X_3= V_3\cup V_4\cup V_5$\\
$X_4= V_k$\\
$X_5= V_j\cup V_1$\\
\end{itemize}
Note that $\ol{X_2\cup X_3\cup X_4}=\ol{X_2\cup X_4\cup X_5}=\ol{V_1\cup V_i\cup V_j\cup V_k}=V_2$.
\begin{equation*}
\begin{split}
LHS & = r^*(X_3)+r^*(X_4)+r^*(X_5)+r^*(X_1\cup X_2)+r^*(X_1\cup X_3\cup X_5)\\
&\qquad+r^*(X_2\cup X_3\cup X_4)+r^*(X_2\cup X_4\cup X_5) \\
& = r^*(X_1\cup X_2)+r^*(X_1\cup X_3\cup X_5)+ |X_3|+|X_4|+|X_5| \\
&\qquad +|X_2\cup X_3\cup X_4|+|X_2\cup X_4\cup X_5|+ r(\ol{X_3})+r(\ol{X_4})\\
&\qquad+r(\ol{X_5})+r(\ol{X_2\cup X_3\cup X_4})+r(\ol{X_2\cup X_4\cup X_5})-5r(M) \\
& = r^*(X_1\cup X_2)+r^*(X_1\cup X_3\cup X_5)-5r(M)+9+3+6\\
&\qquad+12+12+5+5+5+2+2\\
& = r^*(X_1\cup X_2)+r^*(X_1\cup X_3\cup X_5)-5r(M)+61\\
\end{split}
\end{equation*}
\begin{equation*}
\begin{split}
RHS & = r^*(X_1\cup X_3)+r^*(X_1\cup X_5)+r^*(X_2\cup X_3)+r^*(X_2\cup X_4)\\
&\qquad+r^*(X_2\cup X_5)+r^*(X_3\cup X_4)+r^*(X_4\cup X_5) \\
& = r^*(X_1\cup X_3)+r^*(X_1\cup X_5)+|X_2\cup X_3|+|X_2\cup X_4|+|X_2\cup X_5|\\
&\qquad +|X_3\cup X_4|+|X_4\cup X_5| +r(\ol{X_2\cup X_3})+r(\ol{X_2\cup X_4})\\
&\qquad+r(\ol{X_2\cup X_5})+r(\ol{X_3\cup X_4})+r(\ol{X_4\cup X_5})-5r(M) \\
& = r^*(X_1\cup X_3)+r^*(X_1\cup X_5)-5r(M)+ 12+9+9\\
&\qquad+9+9+2+4+4+5+4\\
& =r^*(X_1\cup X_3)+r^*(X_1\cup X_5)-5r(M)+ 67\\
\end{split}
\end{equation*}

We thus must have that 
\begin{equation*}
\begin{split}
&\qquad r^*(X_1\cup X_2)+r^*(X_1\cup X_3\cup X_5)\\
&>r^*(X_1\cup X_3)+r^*(X_1\cup X_5)+6
\end{split}
\end{equation*}
in order for this to be a bad family.

Suppose $Z=\varnothing$. Then we have that $7+9>9+8+6$ which is untrue. Now suppose $Z$ is non-empty. $X_1\cup X_3\cup X_5$ is equal to the entire ground set, so changing $Z$ has no effect on this term and it is thus still spanning. Now take $r^*(X_1\cup X_3)=r^*(V_2\cup V_3\cup V_4\cup V_5\cup Z)$. As $\ol{X_1\cup X_3}=V_1-Z$ is coindependent for any choice of $Z$, $X_1\cup X_3$ is spanning for any choice of $Z$.

Now note that $X_1\cup X_2=V_1\cup V_2\cup V_i\cup Z$ and $\ol{X_1\cup X_2}=(V_j\cup V_k)-Z$. Note that $r(V_j\cup V_k)=4$.
If $Z=\varnothing$, $r(\ol{X_1\cup X_2})=4$. If $Z$ is equal to one element in $V_j\cup V_k$, the cardinality of $X_1\cup X_2$ will increase by one but the rank of $\ol{X_1\cup X_2}$ will be unchanged. This increases $r^*(X_1\cup X_2)$ by one. Likewise, if $Z$ is equal to two elements of $V_j\cup V_k$, $r^*(X_1\cup X_2)$ increases by two. If $Z$ has cardinality greater than or equal to two, $(V_j\cup V_k)-Z$ will be coindependent, making $X_1\cup X_2$ spanning for all such $Z$. This means the left-hand side of the inequality can increase by at most two.

Finally, $|X_1\cup X_5|=|V_1\cup V_2\cup V_j\cup Z|$ and $\ol{X_1\cup X_5}=(V_i\cup V_k)-Z$. Note that $r(V_i\cup V_k)=5$. If $Z$ is equal to one element in $V_i\cup V_k$, the cardinality of $X_1\cup X_5$ will increase by one but the rank of $\ol{X_1\cup X_5}$ will be unchanged. This increases $r^*(X_1\cup X_5)$ by one. When $Z$ is equal to one or more elements in $V_i\cup V_k$, $(V_i\cup V_k)-Z$ is coindependent. This means that $X_1\cup X_5$ is spanning for all such $Z$, and increasing $Z$ further can have no effect. This means the right-hand side of the inequality can only increase in value by at most one with a non-empty $Z$. 

The inequality will thus still not be satisfied for any choice of $Z$.

\begin{itemize}[Subcase 4.1b:] 
\item $ X_1=V_2\cup Z$ where $Z\subseteq E(M)$ \\
$X_2= V_i\cup V_1$\\
$X_3= V_3\cup V_4\cup V_5$\\
$X_4= V_k\cup V_1$\\
$X_5= V_j$
\end{itemize}

Compare this to Subcase 4.1a. $X_4$ now contains $V_1$ and $X_5$ does not, while in 4.1a this was the opposite way around. First consider the left-hand side of the inequality. $r^*(X_4)+r^*(X_5)$ is unchanged, as is $r^*(X_2\cup X_4\cup X_5)$. As $V_1$ is contained in $X_2$, $r^*(X_2\cup X_3\cup X_4)$ and $r^*(X_2\cup X_4\cup X_5)$ are also unchanged. The only possible change from 4.1a is thus in $r^*(X_1\cup X_3\cup X_5)$. Note that $X_1\cup X_3\cup X_5=V_2\cup V_i\cup V_j\cup V_k\cup Z$ and $\ol{X_1\cup X_3\cup X_5}=V_1-Z$. As $V_1-Z$ is coindependent for any value of $Z$, $X_1\cup X_3\cup X_5$ is spanning, as in Subcase 4.1a. The left-hand side of the inequality is thus unchanged from Subcase 4.1a.

Now consider the right-hand side of the inequality. $r^*(X_2\cup X_4)+r^*(X_2\cup X_5)$ remains the same, as does $r^*(X_4\cup X_5)$. This leaves $r^*(X_1\cup X_5)$ and $r^*(X_3\cup X_4)$. First note that $X_3\cup X_4=V_1\cup V_3\cup V_4\cup V_5$ and $\ol{X_3\cup X_4}=V_2$, which is coindependent, and so $X_3\cup X_4$ is still spanning. Now take $X_1\cup X_5=V_2\cup V_j\cup Z$, where $\ol{X_1\cup X_5}=(V_1\cup V_i\cup V_k)-Z$. Suppose $Z=\varnothing$. As $V_i\cup V_k$ is spanning, $r^*(X_1\cup X_5)$ will fall by three in comparision to Subcase 4.1a. As $V_1\cup V_i\cup V_k$ is a dependent set of rank $5$, we can remove at most three elements from it without affecting the rank. Thus we can increase $|X_1\cup X_5|$ by three without affecting $r^*(X_1\cup X_5)$, but, after that, any change in $|X_1\cup X_5|$ is matched by a decrease in $r(\ol{X_1\cup X_5})$, causing $r^*(X_1\cup X_5)$ to remain the same. 

Thus, in comparision to Subcase 4.1a, the inequality is at worst three lower on the right-hand side. As we showed in that subcase that the left-hand side can increase by at most two with a non-empty choice of $Z$, the inequality still cannot be satisfied.

\begin{itemize}[Subcase 4.1c:] 
\item $ X_1=V_2\cup Z$ where $Z\subseteq E(M)$ \\
$X_2= V_i$\\
$X_3= V_3\cup V_4\cup V_5$\\
$X_4= V_k\cup V_1$\\
$X_5= V_j\cup V_1$
\end{itemize}
Compared to subcase 4.1a, $X_5$ now contains $V_1$ and $X_2$ does not. 

Consider the left-hand side of the inequality. Any terms which do not involve $X_2$ or $X_5$ will be unchanged from Subcase 4.1a. $|X_5|$ increases by three, while $r(\ol{X_5})$ remains the same, causing $r^*(X_5)$ to increase by three. $r^*(X_2\cup X_3\cup X_4)$ and $r^*(X_2\cup X_4\cup X_5)$ remain the same, as $V_1\subset X_4$. Finally, take $X_1\cup X_2=V_2\cup V_i\cup Z$. We have that $r^*(V_2\cup V_i)=5$, which is two less than in Subcase 4.1a. As $\ol{X_1\cup X_2}=(V_1\cup V_j\cup V_k)-Z$ is a dependent set of rank $5$, we can increase $|X_1\cup X_2|$ by three without changing $r(\ol{X_1\cup X_2})$. If the cardinality of $Z\subseteq V_1\cup V_j\cup V_k$ is any greater, $\ol{X_1\cup X_2}$ is coindependent, and so $X_1\cup X_2$ is spanning for all such $Z$. Thus, in total, the left-hand side increases in value by at most one.

Now take the right-hand side of the inequality. Note that $X_2\cup X_3$ is still spanning, as $\ol{X_2\cup X_3}=V_2$ is coindependent, and note that $r^*(X_2\cup X_4)$ is unchanged. Also, $r^*(X_2\cup X_5)$ and $r^*(X_4\cup X_5)$ remain the same, as $V_1\subset X_5$. The right-hand side thus does not change in value.

The inequality still does not hold, and we thus have no bad family for any choice of $Z$.

\begin{itemize}[Subcase 4.2a:] 
\item $ X_1=V_2\cup Z$ where $Z\subseteq E(M)$ \\
$X_2= V_i\cup V_1$\\
$X_3= V_3\cup V_4\cup V_5$\\
$X_4= V_k\cup V_1$\\
$X_5= V_j\cup\{a\}$ where $a\in V_1$
\end{itemize}

Note that $\ol{X_2\cup X_3\cup X_4}=\ol{X_2\cup X_4\cup X_5}=\ol{V_1\cup V_i\cup V_j\cup V_k}=V_2$.
\begin{equation*}
\begin{split}
LHS & = r^*(X_3)+r^*(X_4)+r^*(X_5)+r^*(X_1\cup X_2)+r^*(X_1\cup X_3\cup X_5)\\
&\qquad+r^*(X_2\cup X_3\cup X_4)+r^*(X_2\cup X_4\cup X_5) \\
& = r^*(X_1\cup X_2)+r^*(X_1\cup X_3\cup X_5)+ |X_3|+|X_4|+|X_5| \\
&\qquad +|X_2\cup X_3\cup X_4|+|X_2\cup X_4\cup X_5|+ r(\ol{X_3})+r(\ol{X_4})\\
&\qquad+r(\ol{X_5})+r(\ol{X_2\cup X_3\cup X_4})+r(\ol{X_2\cup X_4\cup X_5})-5r(M) \\
& = r^*(X_1\cup X_2)+r^*(X_1\cup X_3\cup X_5)-5r(M)+9+6+4\\
&\qquad+12+12+5+5+5+2+2\\
& = r^*(X_1\cup X_2)+r^*(X_1\cup X_3\cup X_5)-5r(M)+62\\
\end{split}
\end{equation*}
\begin{equation*}
\begin{split}
RHS & = r^*(X_1\cup X_3)+r^*(X_1\cup X_5)+r^*(X_2\cup X_3)+r^*(X_2\cup X_4)\\
&\qquad+r^*(X_2\cup X_5)+r^*(X_3\cup X_4)+r^*(X_4\cup X_5) \\
& = r^*(X_1\cup X_3)+r^*(X_1\cup X_5)+|X_2\cup X_3|+|X_2\cup X_4|+|X_2\cup X_5|\\
&\qquad +|X_3\cup X_4|+|X_4\cup X_5| +r(\ol{X_2\cup X_3})+r(\ol{X_2\cup X_4})\\
&\qquad+r(\ol{X_2\cup X_5})+r(\ol{X_3\cup X_4})+r(\ol{X_4\cup X_5})-5r(M) \\
& = r^*(X_1\cup X_3)+r^*(X_1\cup X_5)-5r(M)+ 12+9+9\\
&\qquad+12+9+2+4+4+2+4\\
& =r^*(X_1\cup X_3)+r^*(X_1\cup X_5)-5r(M)+ 67
\end{split}
\end{equation*}

We thus must have that 
\begin{equation*}
\begin{split}
&\qquad r^*(X_1\cup X_2)+r^*(X_1\cup X_3\cup X_5)\\
&>r^*(X_1\cup X_3)+r^*(X_1\cup X_5)+5
\end{split}
\end{equation*}
in order for this to be a bad family.

Suppose $Z=\varnothing$. We have that $7+9>9+6+5$ which is untrue. Now suppose $Z\neq\varnothing$. We have shown in Subcase 4.1a that $r^*(X_1\cup X_3)$ cannot change, and that $r^*(X_1\cup X_2)$ can increase by at most two. The sets \vect{X}{3} are the same in the current subcase and hence the same facts apply. Note that $X_1\cup X_3\cup X_5$ is equal to the entire ground set, and thus changing $Z$ will have no effect on this term, as in Subcase 4.1a. Finally, note that $X_1\cup X_5=\{a\}\cup V_2\cup V_j\cup Z$ and $\ol{X_1\cup X_5}=(\{b,c\}\cup V_i\cup V_k)-Z$. As $\{b,c\}\cup V_i\cup V_k$ is a dependent set of rank $5$, we can remove at most three elements from it without affecting the rank. Thus we can increase $|X_1\cup X_5|$ by three without affecting $r^*(X_1\cup X_5)$, but, after that, $\ol{X_1\cup X_5}$ is coindependent, causing $r^*(X_1\cup X_5)$ to be spanning for all such $Z$. Thus the right-hand side of the inequality can increase by at most three when we make $Z$ non-empty. The left-hand side can increase by at most two, and so the inequality still cannot be satisfied.

\begin{itemize}[Subcase 4.2b:] 
\item $ X_1=V_2\cup Z$ where $Z\subseteq E(M)$ \\
$X_2= V_i\cup\{a\}$ where $a\in V_1$\\
$X_3= V_3\cup V_4\cup V_5$\\
$X_4= V_k\cup V_1$\\
$X_5= V_j\cup V_1$\\
\end{itemize}

Compared to Subcase 4.2a, $X_2$ contains two less elements of $V_1$ while $X_5$ contains two more. 

Consider the left-hand side of the inequality. Any terms not involving $X_2$ or $X_5$ are unchanged from Subcase 4.2a. $|X_5|$ increases by two in comparision to Subcase 4.2a, while $r(\ol{X_5})$ remains the same, causing $r^*(X_5)$ to increase by two. As $V_1\in X_4$, $r^*(X_2\cup X_3\cup X_4)$ and $r^*(X_2\cup X_4\cup X_5)$ remain the same. Note that $X_1\cup X_3\cup X_5$ is equal to the entire ground set and thus must be spanning, as in Subcase 4.2a. Now note that $X_1\cup X_2=\{a\}\cup V_2\cup V_i\cup Z$ and $\ol{X_1\cup X_2}=(\{b,c\}\cup V_i\cup V_k)-Z$. As $\{b,c\}\cup V_i\cup V_k$ is a dependent set of rank $5$, we can remove at most three elements from it without affecting the rank. Thus we can increase $|X_1\cup X_2|$ by three without affecting $r^*(X_1\cup X_2)$, but, after that, $\ol{X_1\cup X_2}$ is coindependent, causing $r^*(X_1\cup X_2)$ to be spanning for all such $Z$.. The left-hand side can thus increase by at most five in comparision to Subcase 4.2a.

Take the right-hand side of the inequality. $r^*(X_2\cup X_5)$ is unchanged. As $V_1\subset X_4$, $r^*(X_2\cup X_4)$ and $r^*(X_4\cup X_5)$ are also unchanged. $|X_2\cup X_3|$ decreases by two, but $r(\ol{X_2\cup X_3})$ increases by two, meaning that $r^*(X_2\cup X_3)$ is unchanged. Finally, take $X_1\cup X_5=V_1\cup V_2\cup V_j\cup Z$, where $\ol{X_1\cup X_5}=(V_i\cup V_k)-Z$. Suppose that, to begin with, $Z=\varnothing$. In Subcase 4.2a, $r^*(X_1\cup X_5)=6$. As $\ol{X_1\cup X_5}$ is still spanning in the current subcase, $r^*(X_1\cup X_5)$ increases by two due to the increase in $|X_1\cup X_2|$. Now suppose $Z$ is non-empty. As $V_i\cup V_k$ has rank $5$, we can increase $|X_1\cup X_5|$ by one without decreasing $r(\ol{X_1\cup X_5})$. For any $Z\subseteq V_i\cup V_k$ with a cardinality greater than or equal to one, $\ol{X_1\cup X_5}$ is coindependent. This means $X_1\cup X_5$ is spanning for all such $Z$, and so $r^*(X_1\cup X_5)$ can increase by at most one with a non-empty $Z$. Thus, in sum, the right-hand side of the inequality can increase by at most three. We cannot have a bad family, for any choice of $Z$.

\begin{itemize}[Subcase 4.2c:] 
\item $ X_1=V_2\cup Z$ where $Z\subseteq E(M)$ \\
$X_2= V_i\cup V_1$\\
$X_3= V_3\cup V_4\cup V_5$\\
$X_4= V_k\cup \{a\}$ where $a\in V_1$\\
$X_5= V_j\cup V_1$
\end{itemize}

Compared to Subcase 4.2a, $X_4$ contains two less elements of $V_1$ while $X_5$ contains two more. 

On the left-hand side of the inequality, there is no change in value compared to Subcase 4.2a. $r^*(X_4)+r^*(X_5)$ remains the same, as does $r^*(X_2\cup X_4\cup X_5)$. In Subcase 4.2a, $X_1\cup X_3\cup X_5$ was spanning and the same is true after adding additional elements to it. As $V_1\subset X_2$, $r^*(X_2\cup X_4\cup X_4)$ is also unchanged.

Now take the right-hand side of the inequality. $r^*(X_4\cup X_5)$ is unchanged, as are $r^*(X_2\cup X_4)$ and $r^*(X_2\cup X_5)$ since $V_1\subset X_2$. As in Subcase 4.2b, $r^*(X_1\cup X_5)$ can increase by at most three. Finally, take $X_3\cup X_4=\{a\}\cup V_3\cup V_4\cup V_5$. This set is spanning, as in Subcase 4.2a.

We have that, in comparision to Subcase 4.2a, the left-hand side remains the same while the right-hand side increases by at most three. We still have no bad family, for any choice of $Z$.

\begin{itemize}[Subcase 4.3:] 
\item $ X_1=V_2\cup Z$ where $Z\subseteq E(M)$ \\
$X_2= V_i\cup V_1$\\
$X_3= V_3\cup V_4\cup V_5$\\
$X_4= V_k\cup V_1$\\
$X_5= V_j\cup V_1$
\end{itemize}

Note that $\ol{X_2\cup X_3\cup X_4}=\ol{X_2\cup X_4\cup X_5}=\ol{V_1\cup V_i\cup V_j\cup V_k}=V_2$.
\begin{equation*}
\begin{split}
LHS & = r^*(X_3)+r^*(X_4)+r^*(X_5)+r^*(X_1\cup X_2)+r^*(X_1\cup X_3\cup X_5)\\
&\qquad+r^*(X_2\cup X_3\cup X_4)+r^*(X_2\cup X_4\cup X_5) \\
& = r^*(X_1\cup X_2)+r^*(X_1\cup X_3\cup X_5)+ |X_3|+|X_4|+|X_5| \\
&\qquad +|X_2\cup X_3\cup X_4|+|X_2\cup X_4\cup X_5|+ r(\ol{X_3})+r(\ol{X_4})\\
&\qquad+r(\ol{X_5})+r(\ol{X_2\cup X_3\cup X_4})+r(\ol{X_2\cup X_4\cup X_5})-5r(M) \\
& = r^*(X_1\cup X_2)+r^*(X_1\cup X_3\cup X_5)-5r(M)+9+6+6\\
&\qquad+12+12+5+5+5+2+2\\
& = r^*(X_1\cup X_2)+r^*(X_1\cup X_3\cup X_5)-5r(M)+64\\
\end{split}
\end{equation*}
\begin{equation*}
\begin{split}
RHS & = r^*(X_1\cup X_3)+r^*(X_1\cup X_5)+r^*(X_2\cup X_3)+r^*(X_2\cup X_4)\\
&\qquad+r^*(X_2\cup X_5)+r^*(X_3\cup X_4)+r^*(X_4\cup X_5) \\
& = r^*(X_1\cup X_3)+r^*(X_1\cup X_5)+|X_2\cup X_3|+|X_2\cup X_4|+|X_2\cup X_5|\\
&\qquad +|X_3\cup X_4|+|X_4\cup X_5| +r(\ol{X_2\cup X_3})+r(\ol{X_2\cup X_4})\\
&\qquad+r(\ol{X_2\cup X_5})+r(\ol{X_3\cup X_4})+r(\ol{X_4\cup X_5})-5r(M) \\
& = r^*(X_1\cup X_3)+r^*(X_1\cup X_5)-5r(M)+ 12+12+9\\
&\qquad+12+9+2+4+4+2+4\\
& =r^*(X_1\cup X_3)+r^*(X_1\cup X_5)-5r(M)+ 70
\end{split}
\end{equation*}

We thus must have that 
\begin{equation*}
\begin{split}
&\qquad r^*(X_1\cup X_2)+r^*(X_1\cup X_3\cup X_5)\\
&>r^*(X_1\cup X_3)+r^*(X_1\cup X_5)+6
\end{split}
\end{equation*}
in order for this to be a bad family.

Suppose $Z=\varnothing$. Then we have that $7+9>9+8+6$ which is untrue. We have shown in Subcase 4.1a that $r^*(X_1\cup X_3)$ cannot change when $Z$ is non-empty, and that $r^*(X_1\cup X_2)$ can increase by at most two. The sets \vect{X}{3} are the same in the current subcase and hence the same facts apply. Note that $X_1\cup X_3\cup X_5$ is equal to the entire ground set, and thus changing $Z$ will have no effect on this term. Finally, $X_1\cup X_5=V_1\cup V_2\cup V_j\cup Z$ and $\ol{X_1\cup X_5}=(V_i\cup V_k)-Z$. As $r(V_i\cup V_k)=5$, we can remove one element from it without decreasing the rank. This increases $|X_1\cup X_5|$ by one and therefore $r^*(X_1\cup X_5)$. For any $Z\subseteq V_i\cup V_k$ with cardinality one or higher, $\ol{X_1\cup X_5}$ is coindependent, so $X_1\cup X_5$ is spanning for all such $Z$. We thus have that the left-hand side can increase by at most two, while the right-hand side can increase by at most one, giving us no possible bad family.

\end{case5}
\end{proof}

\chapter{A Complexity Theorem}

As yet no method of testing whether a matroid satisfies a Kinser equality has presented itself other than brute force. This leads to the question of whether it is possible to do this in polynomial time. Given the increasing number of terms in each inequality and the lack of bounds on a matroid's possible ground set, this is an important question in terms of the results it is feasible to get -- in particular, whether it would be feasible to construct a matroid similar to that used in Theorem 4.4 and test whether it satisfies inequality $n$ for $n\geq 5$, in order to show that the higher Kinser classes are not dual closed. We give a proof that it would in fact be impossible to test these in polynomial time.

An \emph{oracle machine} consists of a Turing machine with a black box attached, which is referred to as the oracle. Given some question about a particular matroid, inputs are fed to the oracle, which then gives an output answering the question. The time the machine takes to produce an output is given as a function of the number on inputs necessary to answer the question. We wish to know the time an oracle machine would take to answer whether a matroid satisfies Kinser inequality $n$.

\begin{dfn}
\label{spike}
Let $r\geq 4$ and take two distinct $r$-element sets $A=\{a_1,\ldots,a_r\},B=\{b_1,\ldots,b_r\}$. We will define the circuit-hyperplanes of the \emph{rank-$r$ binary spike}, denoted by $Z_r$, on ground set $E=A\cup B$ by its set of circuits. First, define the set of circuit-hyperplanes to be the subsets $\{z_1,\ldots,z_r\}$, where $z_i\in\{a_i,b_i\}$, such that $|\{z_1,\ldots,z_r\}\cap \{b_1,\ldots,b_r\}|$ is even. The non-spanning circuits of $Z_r$ consist of the circuit-hyperplanes as defined above and subsets of $E$ of the form $\{a_i,b_i,a_k,b_k\}$.
\end{dfn}

$Z_r$ can be represented by the following matrix:

\begin{center}
\includegraphics{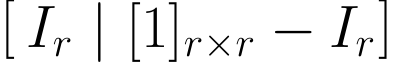}
\end{center}

\begin{lemma}Take an arbitrary rank $r$ binary spike where $r$ is even. If we relax any circuit-hyperplane other than $A$, the resulting matroid violates the Ingleton condition.
\end{lemma}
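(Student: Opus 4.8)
The plan is to reduce to a canonical relaxed circuit‑hyperplane and then write down one explicit four‑set family that fails the Ingleton inequality. First I would observe that $Z_r$ has a rich automorphism group: permuting the legs is obviously an automorphism, and so is the map that simultaneously swaps $a_i$ with $b_i$ over any even‑size set $S$ of legs — it fixes each four‑element circuit $\{a_i,b_i,a_k,b_k\}$ and changes the number of $b$'s in a transversal by $|S|\equiv 0\pmod 2$, hence preserves the defining family of circuit‑hyperplanes (one can also read this off the binary representation $a_i=e_i$ for $i<r$, $a_r=e_1+\cdots+e_{r-1}$, $b_i=a_i+e_r$). Since $H\neq A$, the circuit‑hyperplane $H$ we relax has $|H\cap B|\ge 2$ (and even), so after swapping $a_j\leftrightarrow b_j$ on all but two of the legs where $H$ picks $b$, and then permuting legs, we may assume $H=\{b_1,b_2,a_3,\ldots,a_r\}$. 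Let $N$ be the matroid obtained from $Z_r$ by relaxing $H$; relaxation changes the rank function only at $H$, where $r_N(H)=r$ while $r_{Z_r}(H)=r-1$.

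Next I would record the combinatorial facts used in the computation. Since $r$ is even, the transversals $A=\{a_1,\ldots,a_r\}$, $B=\{b_1,\ldots,b_r\}$ and $H':=E-H=\{a_1,a_2,b_3,\ldots,b_r\}$ all have an even number of $b$'s, so they are circuit‑hyperplanes (in particular flats) of $Z_r$, each distinct from $H$, hence each still has rank $r-1$ in $N$. Also $L_1\cup L_2=\{a_1,b_1,a_2,b_2\}$ is a four‑element circuit (rank $3$), and $L_3\cup\cdots\cup L_r=\{a_3,b_3,\ldots,a_r,b_r\}$ has rank $r-1$ (it lies in a hyperplane — a one‑line check with the representation); both are unchanged in $N$. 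Finally, $A$ and $B$ are flats of $Z_r$, so any set properly containing $A$ or $B$ has rank $r$.

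Then I would take $X_1=\{b_1,b_2\}$, $X_2=\{a_3,\ldots,a_r\}$, $X_3=\{a_1,a_2\}$, $X_4=\{b_3,\ldots,b_r\}$ and evaluate the Ingleton inequality in $N$. Here $r(X_3)=2$ and $r(X_4)=r-2$; $X_1\cup X_2=H$ so $r(X_1\cup X_2)=r$ (this is the one place the relaxation is felt); $X_1\cup X_3\cup X_4$ properly contains $B$ and $X_2\cup X_3\cup X_4$ properly contains $A$, so each has rank $r$; while on the other side $X_1\cup X_3=L_1\cup L_2$ has rank $3$, and $X_1\cup X_4=B$, $X_2\cup X_3=A$, $X_2\cup X_4=L_3\cup\cdots\cup L_r$, $X_3\cup X_4=H'$ each have rank $r-1$. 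The left‑hand side is $2+(r-2)+r+r+r=4r$ and the right‑hand side is $3+4(r-1)=4r-1$, so the inequality fails and $N\notin\mathcal K_4$. (Sanity check: the same family in $Z_r$ itself gives $4r-1\le 4r-1$, an equality, as it must since $Z_r$ is representable; relaxing $H$ adds exactly $1$ to the left side and nothing to the right.)

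The computational content is just this bookkeeping — checking the ten rank values and confirming that $H$ is the unique set among them whose rank moves under the relaxation, so that the representable case's tight equality degrades into a strict violation. I expect the genuinely non‑routine parts to be two: making the "without loss of generality" step precise, i.e. justifying that leg‑permutations and even‑support $a\leftrightarrow b$ swaps really are automorphisms of $Z_r$; and hitting on the family in the first place — the idea being to split the relaxed circuit‑hyperplane $H$ and its complementary circuit‑hyperplane $E-H$ across the same partition $\{1,2\}\mid\{3,\ldots,r\}$ of the legs, pairing the two circuit‑hyperplanes leg by leg.
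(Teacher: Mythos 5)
Your proof is correct and is essentially the paper's argument: up to the symmetry of the Ingleton inequality under $X_1\leftrightarrow X_2$, $X_3\leftrightarrow X_4$, the family $(X_1,X_2,X_3,X_4)=(H\cap B,\,H\cap A,\,\{a_j:b_j\in H\},\,\{b_i:a_i\in H\})$ you use is exactly the one the paper evaluates, with the same rank bookkeeping ($4r$ versus $4r-1$). The only difference is cosmetic: you first normalise $H$ to $\{b_1,b_2,a_3,\ldots,a_r\}$ by the leg permutations and even-support $a\leftrightarrow b$ swaps, whereas the paper works directly with an arbitrary $Z$ and the parameters $|I|,|J|$; your automorphism check is sound and also dispatches the case $H=B$, which the paper's choice (requiring $Z\cap A$ and $Z\cap B$ nonempty) silently sidesteps.
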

\begin{proof}
Take a binary spike $Z_r$. Take one of the circuit-hyperplanes of $Z_r$ and call it $Z$, where $Z$ is chosen so that $Z\cap A$ and $Z\cap B$ are non-empty. Define $I\subseteq\{1,\ldots,r \}$ such that $i\in I$ if and only if $a_i\in Z$, and define $J\subseteq\{1,\ldots,r\}$ such that $j\in J$ if and only if $b_i\in Z$. Now let $X_1=\{a_i \ | \ i\in I\}$, $X_2=\{b_j \ | \ j\in J\}$, $X_3=\{b_i \ | \ i\in I\}$, and $X_4=\{a_j \ | \ j\in J\}$. In other words, $X_1$ and $X_2$ consist of the elements in the circuit-hyperplane $Z$ contained in $A$ and $B$ respectively, while $X_3$ and $X_4$ consist of all the remaining elements in $B$ and $A$. Note that $X_2$ contains an even number of elements from $B$ and that $|X_1\cup X_2|=r$, making it a circuit-hyperplane. Relax $X_1\cup X_2$ to get the matroid $Z^-_r$ and evaluate the Ingleton condition:
\begin{align*}
r(X_3)+r(X_4)+r(X_1\cup X_2)+r(X_1\cup X_3\cup X_4)+r(X_2\cup X_3\cup X_4)\\
\leq r(X_1\cup X_3)+r(X_1\cup X_4)+r(X_2\cup X_3)+r(X_2\cup X_4)+r(X_3\cup X_4)
\end{align*} 
The set of non-spanning circuits of $Z_r$ consists of the circuit-hyperplanes as defined above and subsets of $E$ of the form $\{a_i,b_i,a_k,b_k\}$. $X_3$ and $X_4$ do not fit into this category and are thus independent. The ground set of $Z_r^-$ has size $2r$ and $Z$ has size $r$, so $X_3$ and $X_4$ have ranks which sum to $r$. Recall that a leg is a subset $\{a_k,b_k\}$ of the ground set for some $k$. A proper subset of the legs has rank one greater than the number of legs. $X_1\cup X_3=\{a_i\cup b_i \ | \ i\in I\}$ and $X_2\cup X_4=\{a_j\cup b_j \ | \ j\in J \}$ are both collections of legs, the former having $|X_1|=|X_3|$ legs and the latter having $|X_2|=|X_4|$ legs. Thus $r(X_1\cup X_3)=|X_1|+1$ and $r(X_2\cup X_4)=|X_2|+1$. As $Z$ is a circuit-hyperplane, $|X_2|$ is even by definition. This means $X_2\cup X_3$ is a circuit-hyperplane. Recall that $X_2$ and $X_3$ partition $B$. As $r$ is even, $|X_3|=r-|X_2|$ must be even as well. Thus $X_3\cup X_4$ is a circuit-hyperplane, as is $X_1\cup X_4=A$. tNow consider $X_1\cup X_3\cup X_4$. This set properly contains the circuit-hyperplane $X_3\cup X_4$, and all the sets $X_i$ are non-empty. Thus $X_1\cup X_3\cup X_4$ is spanning. $X_2\cup X_3\cup X_4$ also properly contains a circuit-hyperplane, and so is also spanning. Using these calculations we can now evaluate the inequality. 

$$4r\leq (|X_1|+1)+(r-1)+(r-1)+(|X_2|+1)+(r-1)$$ 

Since $|X_1|+|X_2|=r$, this simplifies to $4r\leq 4r-1$ which is untrue. \\ \vect{X}{4} therefore form a bad family. \\
\end{proof}

\begin{thm}
Let $n\geq 4$. There does not exist a polynomial time oracle machine testing Kinser inequality $n$ or its dual.
\end{thm}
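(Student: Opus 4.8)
The plan is to run a standard adversary (information-hiding) argument over an exponentially large family of matroids built from the binary spike of the preceding lemma. Fix an \emph{even} $r$ and put $N=2r=|E(Z_r)|$. Let $\mathcal{H}$ be the set of circuit-hyperplanes $H$ of $Z_r$ with $H\cap A\neq\varnothing$ and $H\cap B\neq\varnothing$; as the definition of $Z_r$ shows there are exactly $2^{r-1}-2$ of these. For $H\in\mathcal{H}$ write $Z_r[H]$ for the matroid obtained from $Z_r$ by relaxing $H$. The argument rests on three facts. (i) $Z_r$ is binary, hence representable, hence lies in $\mathcal{K}_n$ by Lemma \ref{rep} (indeed in every Kinser class). (ii) Each $Z_r[H]$ violates the Ingleton inequality by the lemma above, so $Z_r[H]\notin\mathcal{K}_4$, and since $\mathcal{K}_n\subseteq\mathcal{K}_4$ by Lemma \ref{supseteq} we get $Z_r[H]\notin\mathcal{K}_n$. (iii) Relaxing the circuit-hyperplane $H$ changes the independence status, and the rank, of exactly one subset of $E$, namely $H$ itself: this is immediate from Definition \ref{relaxation} together with the observation that every set properly containing a hyperplane is already spanning, while no set failing to contain $H$ is affected. (The same holds verbatim for a rank oracle, so the argument is insensitive to which of the usual oracle models is meant.)

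Now suppose, for contradiction, that $\mathcal{A}$ is a polynomial-time oracle machine deciding membership in $\mathcal{K}_n$. Feed $\mathcal{A}$ the (independence) oracle of a matroid on ground set $E=A\cup B$ to be chosen adversarially, where the adversary answers every query "is $S$ independent?" exactly as it would be answered for $Z_r$. Since $\mathcal{A}$ runs in time polynomial in $N$, it makes at most $p(N)$ oracle calls for some fixed polynomial $p$. Choose $r$ even and so large that $p(2r)<2^{r-1}-2$; then some $H\in\mathcal{H}$ is never presented to the oracle. By fact (iii) the adversary's transcript of answers is simultaneously consistent with $Z_r$ and with $Z_r[H]$, so $\mathcal{A}$'s entire computation, and hence its output, is the same whether the true matroid is $Z_r$ or $Z_r[H]$. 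But by (i) and (ii) the correct answers differ. This contradiction proves the statement for $\mathcal{K}_n$.

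For the dual I transport the same family through duality. By Lemma \ref{relax}, $\bigl(Z_r[H]\bigr)^{*}$ is precisely the matroid obtained from $Z_r^{*}$ by relaxing the circuit-hyperplane $E-H$, and $Z_r^{*}$ is binary since the dual of a binary matroid is binary. Thus $Z_r^{*}$ is representable, so $Z_r^{*}\in\mathcal{K}_n^{*}$, whereas $\bigl(Z_r[H]\bigr)^{*}\notin\mathcal{K}_n^{*}$ exactly because $Z_r[H]\notin\mathcal{K}_n$; and, as before, relaxing the circuit-hyperplane $E-H$ of $Z_r^{*}$ alters the oracle on the single set $E-H$. The $2^{r-1}-2$ sets $\{E-H : H\in\mathcal{H}\}$ are distinct circuit-hyperplanes of $Z_r^{*}$, so the identical counting-and-hiding argument applied to the oracle of $Z_r^{*}$ rules out a polynomial-time oracle machine deciding $\mathcal{K}_n^{*}$.

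The routine parts are the count $|\mathcal{H}|=2^{r-1}-2$ and the observation that "polynomial time" forces "polynomially many queries in $N=2r$", so that some circuit-hyperplane must escape interrogation once $r$ is large. The one point deserving genuine care — where I would spend the effort — is fact (iii): that a single circuit-hyperplane relaxation perturbs the independence (and rank) oracle on exactly one input. This is what makes the adversary's "answer as $Z_r$" strategy simultaneously consistent with every un-queried relaxed matroid, and hence what makes the hidden-needle argument go through; once it is nailed down, the theorem follows by choosing $r$ even and larger than whatever polynomial bound the hypothetical machine imposes.
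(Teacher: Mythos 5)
Your proof is correct and follows essentially the same route as the paper: use binary spikes of even rank, the fact that each relaxation of an interior circuit-hyperplane violates Ingleton (hence lies outside $\mathcal{K}_n$ for every $n\geq 4$), and the exponential number of candidate circuit-hyperplanes, each detectable only by querying it, to force any oracle algorithm to make exponentially many queries. You have actually made the argument noticeably tighter than the paper's: you state the adversary/information-hiding mechanism explicitly, you correctly isolate the key single-set perturbation fact (that a relaxation changes the rank/independence of exactly the relaxed set and nothing else), and you give a complete argument for the dual case via Lemma \ref{relax} rather than the paper's brief appeal to spikes being dual-closed.
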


\begin{proof}
As proven above, each binary spike $Z_r$ of even rank is representable, therefore satisfies the inequality, while its relaxation $Z^-_r$ does not. This means that in order to test whether a matroid satisfies Kinser inequality $n$ or its dual, the oracle machine must distinguish between each $Z_r$ and $Z^-_r$. Recall $Z^-_r$ can be constructed by relaxing any circuit-hyperplane, which consists of an $r$ element subset $\{z_1,...,z_r\}$ of the ground set $A\cup B$ where $z_i\in\{a_i,b_i\}$ and $|\{z_1,...,z_r\}\cap \{b_1,...,b_r\}|$ is even. Suppose the oracle did not check the rank of the relaxed circuit-hyperplane. This would mean it yields the same result as before the circuit-hyperplane was relaxed, as that is the only subset which changes in rank. Thus the oracle must check the rank of each possible circuit-hyperplane. There are $2^r$ $r$-element sets using one element from each leg, and half of these contain an even number of elements from $\{b_1,...,b_r\}$. The algorithm hence takes at least $2^{r-1}=2^{\frac{E}{2}-1}$ checks, and therefore is exponential in the size of the ground set. As the class of spikes is dual-closed, testing whether the dual of a matroid satisfies Kinser inequality $n$ is also exponential in the size of the ground set.
\end{proof}

\chapter{Excluded minors}

The following theorem was proved by Mayhew, Newman, and Whittle in 2008 \cite{eminors}, settling a conjecture by J. Geelen.

\begin{thm}\label{MNW}
For any infinite field $\mathbb{K}$ and any matroid $N$ representable over $\mathbb{K}$, there is an excluded minor for $\mathbb{K}$-representability that has $N$ as a minor.
\end{thm}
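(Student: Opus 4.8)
The plan is to prove Theorem~\ref{MNW} by \emph{constructing} an excluded minor directly. It suffices to produce, for the given infinite field $\mathbb{K}$ and $\mathbb{K}$-representable matroid $N$, a matroid $M$ with three properties: (a) $N$ is a proper minor of $M$; (b) $M$ is not $\mathbb{K}$-representable; and (c) $M\backslash e$ and $M/e$ are $\mathbb{K}$-representable for every $e\in E(M)$. Since $\mathbb{K}$-representability is minor-closed, every proper minor of $M$ is a minor of some single-element minor, so (c) forces all proper minors of $M$ to be $\mathbb{K}$-representable; combined with (b) this says $M$ is an excluded minor for $\mathbb{K}$-representability, and (a) gives the conclusion. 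Thus the whole theorem reduces to building one matroid with properties (a)--(c).

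To build $M$, I would start from a fixed $\mathbb{K}$-representation of $N$ and ``pad'' it: apply a sequence of free extensions and free coextensions to obtain a matroid $\widehat{N}$ that still has $N$ as a minor but now carries a supply of generically placed elements on prescribed flats. Over an infinite field these free operations preserve $\mathbb{K}$-representability (this is exactly the mechanism used in the excerpt to see that $\kin{r}$, a truncation of the transversal matroid $M_{r+1}$, is $\mathbb{K}$-representable). Then I would graft onto the generic attachment elements of $\widehat{N}$ a small non-$\mathbb{K}$-representable gadget --- for instance a suitably relaxed spike $Z_r^-$, of the kind analysed in the complexity chapter, glued in along a common restriction or via a generalized parallel connection --- chosen so that any $\mathbb{K}$-representation of the whole configuration would be forced to realise the gadget's forbidden subconfiguration (giving (b)), while the gadget is attached tightly enough that it cannot be separated from the copy of $N$ inside $\widehat{N}$.

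The genuinely hard part, and the step I expect to be the main obstacle, is verifying minor-minimality, property~(c): for \emph{every} element $e$, deleting or contracting $e$ must restore $\mathbb{K}$-representability. This is where the infiniteness of $\mathbb{K}$ is indispensable. I would argue by cases according to whether $e$ lies in the grafted gadget, in the free padding $E(\widehat{N})-E(N)$, or in $N$ itself, and in each case show that removing $e$ releases at least one degree of freedom in the system of constraints that any $\mathbb{K}$-representation must satisfy; because $\mathbb{K}$ has infinitely many elements, a generic choice of the freed parameter avoids the finitely many ``bad'' equalities, yielding an explicit $\mathbb{K}$-representation of $M\backslash e$ or $M/e$. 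The delicate bookkeeping is to design the gadget and the attachment so that exactly one element's removal always unlocks the configuration; I would isolate this as a ``sufficient freedom'' lemma about $\widehat{N}$ relative to its attachment elements, and the interaction between the gadget's constraints and the geometry of $N$ is the heart of the matter.

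Finally, although only Theorem~\ref{MNW} is being proved here, the construction is built so that $N$ can be replaced by $N\oplus\kin{n}$ (or so that a Kinser matroid can be threaded through the padding) without disturbing properties (a)--(c); combined with the facts that $\mathcal{K}_n$ is minor-closed and closed under direct sums (Lemmas~\ref{minors} and~\ref{ds closed}), this is what will let the thesis upgrade Theorem~\ref{MNW} to an infinite family of excluded minors, one landing inside each layer $\mathcal{K}_{n-1}-\mathcal{K}_n$ of the Kinser hierarchy.
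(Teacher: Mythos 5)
Your high-level framing is correct: an excluded minor for $\mathbb{K}$-representability containing $N$ is exactly a matroid $M$ with properties (a)--(c), and because $\mathbb{K}$-representability is minor-closed, (c) for single-element minors suffices. That reduction is sound and matches what the paper implicitly uses.

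However, the proposed \emph{mechanism} has a genuine gap, and it diverges from the paper's actual construction at precisely the point you flag as ``the genuinely hard part.'' You suggest grafting a non-representable gadget (e.g.\ a relaxed spike $Z_r^-$) onto a freely padded $\widehat{N}$ via a generalized parallel connection, and then arguing minor-minimality by a case-by-case ``degree of freedom'' heuristic. There is no reason to expect this to work as stated: if the gadget is glued along a modular flat, deleting or contracting an element of the padding or of $N$ itself leaves the gadget intact, and the resulting matroid can still contain $Z_r^-$ as a minor, hence remain non-representable. The ``release a degree of freedom'' argument is not a lemma, it is a hope; and the thing it is hoping for --- that every single-element removal certifies representability --- is the entire content of the theorem. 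The spike makes matters worse, not better, since $Z_r^-$ has many non-representable minors.

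The paper's construction solves this with a much tighter idea that you have not reproduced: it builds a $\mathbb{K}$-representable matroid $N$ inside $PG(r,\mathbb{K})$ carrying a cyclic family of circuit-hyperplanes $H_0\cup\{e,f\},\ldots,H_{n-1}\cup\{e,f\}$, arranged so that relaxing the single circuit-hyperplane $H_0\cup\{e,f\}$ produces a matroid $N'$ violating Kinser inequality $n+1$ (hence not $\mathbb{K}$-representable), \emph{while relaxing any other} $H_i\cup\{e,f\}$ (possibly together with $H_0\cup\{e,f\}$) yields a $\mathbb{K}$-representable matroid. Minor-minimality then falls out of \cite[Proposition~3.3.5]{Oxley}: for $x\in H_j$ with $j\neq 0$, $N'\backslash x$ coincides with $N''\backslash x$ where $N''$ is one of the \emph{representable} double relaxations, and $N'/x$ coincides with $N''/x$ for another representable relaxation; the remaining cases ($x\in H_0$, $x\in\{e,f\}$) are handled by the same lemma together with the free placement of $e$, $f$ on the line $\langle p,p'\rangle$. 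In other words, the representability of every single-element minor is not proved by re-examining a constraint system element by element, but is inherited structurally from the family of representable relaxations. That device --- a family of circuit-hyperplanes whose individual relaxations toggle representability in a controlled way, combined with the relaxation/minor commutation lemma --- is the key idea missing from your sketch, and without it there is no proof of property (c).
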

The proof of Theorem \ref{MNW} constructed an excluded minor which contained $N$ and which was not contained in $\mathcal{K}_4$, and thus was not contained inside any Kinser class. In this chapter we will give a strengthening of this result, which states that the excluded minors can actually be contained inside any layer of the hierarchy.

\begin{lemma}
\label{free}
Let $r\geq 3$ be an integer. Let $P$ be the projective geometry PG$(r-1,\mathcal{K})$, where $\mathcal{K}$ is an infinite field, and let \vect{S}{t} be a finite collection of proper subspaces of $P$. If $S$ is a subspace of $P$ that is not contained in any of \vect{S}{t}, then $S$ is not contained in $S_1\cup\ldots\cup S_t$.
\end{lemma}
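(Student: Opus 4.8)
The plan is to prove the contrapositive-style statement directly: assuming $S \subseteq S_1 \cup \dots \cup S_t$, show that $S \subseteq S_i$ for some $i$. The key point is that over an infinite field, a subspace cannot be covered by finitely many proper subspaces of itself, so I will reduce to that fact. First I would proceed by induction on $t$. The base case $t=1$ is immediate. For the inductive step, suppose $S \subseteq S_1 \cup \dots \cup S_t$ but $S \not\subseteq S_i$ for all $i$. Then for each $i$, the set $T_i = S \cap S_i$ is a proper subspace of $S$ (proper because $S \not\subseteq S_i$, a subspace because it is an intersection of subspaces). So $S = T_1 \cup \dots \cup T_t$ with each $T_i$ a proper subspace of $S$.

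Now I would show this is impossible when $\mathcal{K}$ is infinite. Pick a point $p \in S \setminus T_1$ (possible since $T_1$ is proper) and a point $q \in T_1$ (if $T_1$ is empty, i.e. a rank-$0$ or empty flat, the covering is even easier — handle that degenerate case separately, or note $S$ must then be covered by $T_2, \dots, T_t$ and apply induction). Consider the line $\ell$ through $p$ and $q$ in $P$; it lies inside $S$ and contains $|\mathcal{K}|+1$ points, infinitely many. Each $T_i$ is a subspace, so $\ell \cap T_i$ is either all of $\ell$ or a single point (or empty); it cannot be all of $\ell$ for $i=1$ since $p \notin T_1$, and if $\ell \subseteq T_i$ for some $i \geq 2$ then in particular $q \in T_i$, which is fine — but then I would instead choose $\ell$ more carefully, or simply argue as follows: among the finitely many $T_i$ that do not contain $\ell$ entirely, each meets $\ell$ in at most one point, contributing at most $t$ points total; the $T_i$ containing $\ell$ entirely would give $\ell \subseteq T_i \subsetneq S$, so I may assume after relabeling that no $T_i$ contains $\ell$ (choosing the starting points to avoid this). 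Then $\ell$ has at most $t$ points, contradicting $|\mathcal{K}| = \infty$.

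The cleanest route, which I would actually write, is this: choose $p \in S \setminus (T_2 \cup \dots \cup T_t)$ — this is possible by the inductive hypothesis applied to $S$ and $T_2, \dots, T_t$ (those $t-1$ proper subspaces cannot cover $S$, else $S \subseteq T_j \subseteq S_j$ for some $j$). Since $p \in S = T_1 \cup \dots \cup T_t$ and $p \notin T_i$ for $i \geq 2$, we get $p \in T_1$. Now pick any $p' \in S \setminus T_1$ and look at the line $\ell$ spanned by $p$ and $p'$ inside $S$. For $i \geq 2$, $T_i \cap \ell$ is a subspace of $\ell$ not containing $p$, hence at most a point; for $i = 1$, $T_1 \cap \ell$ is a subspace not containing $p'$, hence at most a point. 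So $\ell$ is covered by at most $t$ points, forcing $|\mathcal{K}| + 1 \leq t$, contradicting that $\mathcal{K}$ is infinite.

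The main obstacle is bookkeeping around degenerate flats (the empty flat and rank-zero flats, and the case where $S$ itself has rank $0$ or $1$) and making sure the inductive hypothesis is invoked on genuinely proper subspaces; the geometric core — a line over an infinite field has infinitely many points and hence cannot be hit by finitely many points-or-hyperplane-sections — is routine. I would also remark that "proper subspace of $P$" in the hypothesis should be read as "$S_i \neq P$", and what the argument really uses is that $S \cap S_i \subsetneq S$, which follows from $S \not\subseteq S_i$.
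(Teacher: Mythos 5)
The paper does not prove this lemma: immediately after the statement it remarks that this is Proposition~4.2 of \cite{missingaxiom}, so there is no in-paper proof to compare against. Your argument supplies a proof, and it is correct. The core fact you isolate---a projective line over an infinite field has $|\mathcal{K}|+1$ points and therefore cannot be covered by finitely many of its own proper subspaces---is exactly the right geometric input, and the ``cleanest route'' paragraph is a sound induction on $t$: find a point $p$ of $S$ avoiding $T_2,\dots,T_t$ (hence forced into $T_1$), find $p'\in S\setminus T_1$, and note that each $T_i$ meets the line $pp'$ in at most a point, so the line has at most $t$ points, a contradiction.

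One small point of hygiene. Your ``cleanest route'' paragraph wavers between invoking the inductive hypothesis with $S$ as the ambient projective space (and $T_2,\dots,T_t$ as its proper subspaces) and with $P$ as the ambient space (and $S_2,\dots,S_t$). Both work, but the second is cleaner to write: keep $P$ fixed, induct only on $t$, and at the inductive step apply the hypothesis to $P$ with the $t-1$ subspaces $S_2,\dots,S_t$ to obtain a point $p\in S\setminus(S_2\cup\dots\cup S_t)$, which is then forced into $T_1$ once you suppose $S\subseteq S_1\cup\dots\cup S_t$. This keeps the ambient rank fixed at $r$ throughout, so the stated hypothesis $r\geq 3$ never becomes an issue mid-induction. (In fact $r\geq 3$ is inessential to the lemma---the line argument works in any rank---and is presumably stated only because the paper never needs it for smaller $r$.) With that bookkeeping made explicit, your proof is complete and self-contained, and it is the standard ``avoid a finite union of proper subspaces over an infinite field'' argument one would expect the cited source to give.
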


This is Proposition 4.2 of \cite{missingaxiom} and we will make frequent reference to it throughout this chapter. Whenever we add points freely to a subspace, it is justified by this result.

\begin{thm}
Let $n\geq 5$ be an integer. Let $\mathbb{K}$ be a infinite field and let $M$ be a $\mathbb{K}$-representable matroid.
Then $M$ is contained in an excluded minor for $\mathcal{K}_{n+1}$ which is in $\mathcal{K}_{n}$.
\end{thm}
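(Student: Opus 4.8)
Fix a $\mathbb{K}$-representable matroid $M$ on ground set $E_0$. The strategy is to build an extension of $M$ that (i) fails to lie in $\mathcal{K}_{n+1}$, (ii) lies in $\mathcal{K}_n$, and (iii) becomes representable (hence is in every Kinser class) after either deleting or contracting any single element. The natural ingredient is $\kin{n+1}^-$, which by Lemma~\ref{gaps} lies in $\mathcal{K}_n - \mathcal{K}_{n+1}$: the family $V_1,\dots,V_{n+1}$ is a bad family for inequality $n+1$ there, and $\kin{n+1}^-$ is obtained from the representable matroid $\kin{n+1}$ by relaxing the single circuit-hyperplane $V_1\cup V_2$. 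So $\kin{n+1}^-$ is ``one relaxation away'' from representability. The idea is to glue a representation of $M$ onto (a generalized-parallel-connection-style amalgam with) $\kin{n+1}$, inside a large projective geometry $\operatorname{PG}(r-1,\mathbb{K})$, placing the relevant subspaces freely using Lemma~\ref{free}, and then relax the analogue of $V_1\cup V_2$ to obtain a matroid $N$ that contains $M$ as a minor and has the bad family for inequality $n+1$ ``localized'' on the part coming from $\kin{n+1}^-$.

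**Key steps.** First, embed $\kin{n+1}$ in $\operatorname{PG}(r-1,\mathbb{K})$ for suitably large $r$ and, in a disjoint portion of the projective space, place a representation of $M$; then freely add points to identify or link them so that the resulting restriction of $\operatorname{PG}(r-1,\mathbb{K})$ to the union of the two point sets, call it $M^+$, has both $\kin{n+1}$ and $M$ as minors, with $V_1\cup V_2$ still a circuit-hyperplane of $M^+$ (here Lemma~\ref{free} guarantees that free placement avoids all the finitely many proper flats that would destroy the circuit-hyperplane property). Second, relax that circuit-hyperplane to get $N_0$; since relaxation changes only the rank of $V_1\cup V_2$ and $V_1,\dots,V_{n+1}$ remains a bad family for inequality $n+1$ (the computation is exactly as in Lemma~\ref{gaps}, using that all the other ranks are unchanged), $N_0\notin\mathcal{K}_{n+1}$. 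Third, verify $N_0\in\mathcal{K}_n$: by Lemma~\ref{minors} it suffices to rule out a bad family for inequality $n$, and the argument should mirror the proof of Lemma~\ref{gaps} — any putative bad family for inequality $n$ would have to involve the relaxed circuit-hyperplane $V_1\cup V_2$ on one side and all the circuit-hyperplanes $V_2\cup V_i$ on the other (because tightening $V_1\cup V_2$ or relaxing any $V_2\cup V_i$ yields a representable matroid by Lemmas~\ref{kin=} and~\cite[Lemma 4.6]{missingaxiom}, which survive the free-extension step), and then a counting/graph argument on the ``right-hand-side graph'' $G'$ shows no such family fits. Fourth, pass to an excluded-minor: let $N$ be a minor of $N_0$ that is minimal with respect to not being in $\mathcal{K}_{n+1}$ while still having $M$ as a minor; since the free-placement construction makes every single-element deletion or contraction of $N_0$ representable except along the copy of $\kin{n+1}^-$, and since $M$ itself is representable hence in $\mathcal{K}_{n+1}$, such a minimal $N$ exists, contains $M$, lies in $\mathcal{K}_n$ (minor-closed), and is an excluded minor for $\mathcal{K}_{n+1}$.

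**Main obstacle.** The hard part is step three — showing $N_0\in\mathcal{K}_n$ — and more precisely making sure the free-amalgamation with $M$ does not create new bad families for inequality $n$ that exploit the extra points of $M$. The proof of Lemma~\ref{gaps} works because $\kin{n}^-$ has very rigid flat structure and a clean supply of ``certifying'' representable relaxations/tightenings; after amalgamating with an arbitrary $M$ one must check that the analogous certificates (tighten the relaxed hyperplane, or relax a $V_2\cup V_i$) still yield representable matroids, which hinges on the free-placement lemma being applied correctly so that $M$'s points sit generically relative to the $\kin{n+1}$ part. I would handle this by choosing the amalgamation so that $E_0$ lies in a flat of $N_0$ that is skew to all the $V_i$ except through a single free connecting element, so that every rank term appearing in inequality $n$ decomposes, via submodularity, into a contribution from the $\kin{n+1}^-$ side plus a contribution from the $M$ side, reducing the question to the already-established Lemma~\ref{gaps} plus the fact that $M\in\mathcal{K}_n$. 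Verifying that this skewness is achievable with only finitely many forbidden subspaces — so that Lemma~\ref{free} applies — is the crux.
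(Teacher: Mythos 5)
Your high-level plan — work inside a large projective geometry over $\mathbb{K}$, use Lemma~\ref{free} to place points freely, construct a representable matroid $N$ containing $M$, relax a circuit-hyperplane, and verify that the result is an excluded minor — matches the general spirit of the paper's proof. But the specific mechanism you propose is fundamentally different from the paper's, and it has a genuine gap that I do not think can be repaired along the lines you sketch.

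The paper does \emph{not} glue a disjoint copy of $\kin{n+1}$ (or $\kin{n+1}^-$) onto $M$. Instead, it \emph{rebuilds} the Kinser-like circuit-hyperplane structure \emph{around} $M$: after invoking \cite[Lemma~2.2]{eminors} to assume $M$ is partitioned into two independent hyperplanes $H_0$ and $H_{n-1}$, the construction uses these two hyperplanes of $M$ itself as two of the ``petals'' $H_i$, then freely places $H_1,\dots,H_{n-2}$ in the projective space and finishes with a series pair $\{e,f\}$ on a line through a point of $\langle X\rangle$ and a free point $p'$. The sets $H_i\cup\{e,f\}$ are then the circuit-hyperplanes, and relaxing $H_0\cup\{e,f\}$ produces the excluded minor $N'$. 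Crucially, $H_0\cup\{e,f\}$ and $H_{n-1}\cup\{e,f\}$ contain elements of $M$, so the relaxed flat and the bad family $(H_0,\{e,f\},H_1,\dots,H_{n-1})$ are \emph{entangled} with $M$'s ground set. This entanglement is exactly what makes the excluded-minor verification work: deleting or contracting any point of $M$ disturbs a set that participates in the bad family, so each single-element minor of $N'$ can be shown to be $\mathbb{K}$-representable (via a second relaxation and the identity from \cite[Proposition~3.3.5]{Oxley}, or via a free-extension argument for $N'/e$).

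Your proposal keeps $M$ and the $\kin{n+1}$-structure essentially disjoint, ``linked'' by a generalized-parallel-connection or a free connecting element, with the bad family $V_1,\dots,V_{n+1}$ living entirely on the $\kin{n+1}^-$ side. This creates an unfixable problem in your Step 4. If $M$ sits skew to the $V_i$ (which is what you want for the $\mathcal{K}_n$ verification in Step 3), then deleting an element $e\in E(M)$ away from the connecting flat leaves the ranks of all the $V_i$-unions unchanged, so the bad family for inequality $n+1$ persists in $N_0\backslash e$. Consequently $N_0$ is nowhere near being an excluded minor, and your proposed fix — take a minor $N$ of $N_0$ that is minimal subject to \emph{both} not lying in $\mathcal{K}_{n+1}$ \emph{and} containing $M$ as a minor — does not yield an excluded minor either: a single-element minor $N\backslash e$ of such an $N$ might fail to contain $M$ and \emph{still} lie outside $\mathcal{K}_{n+1}$, and your minimality condition says nothing about that case. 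Indeed, by the observation above, minimising over minors that are not in $\mathcal{K}_{n+1}$ will strip $M$ away entirely, landing on something like $\kin{n+1}^-$ itself. The only way out is to force the bad family to \emph{use} $M$'s elements, which is precisely what the paper does by recruiting $M$'s hyperplanes $H_0,H_{n-1}$ as members of the circuit-hyperplane family.

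Two smaller remarks. First, your intuition that Step 3 (showing the result lies in $\mathcal{K}_n$) is the hard part is reasonable and in fact points to a gap in the paper as written — the paper's proof establishes that $N'$ is an excluded minor for $\mathcal{K}_{n+1}$ but never explicitly verifies $N'\in\mathcal{K}_n$; presumably an argument analogous to Lemma~\ref{gaps} is intended but not spelled out. Second, the ``skewness'' idea you float at the end — decompose rank terms via submodularity into a $\kin{n+1}^-$ contribution and an $M$ contribution — is the right instinct for Step 3 but is in tension with Step 4 for the reason just given: the more skew $M$ is to the Kinser structure, the less its elements matter to the bad family, and the less hope there is of the construction being minor-minimal.
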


\begin{proof}
As we can add coloops as desired, we can asssume $M$ has rank $r$ where $r\geq n$. By \cite[Lemma 2.2]{eminors}, we can assume that $M$ is partitioned into two independent hyperplanes. Call these $H_0$ and $H_{n-1}$. Let $\mathbb{K}$ be an infinite field. Imbed $M$ in the projective geometry $P=PG(r,\mathbb{K})$, so that the elements in the ground set of $M$ are identified with points in $P$. Note that this geometry has rank $r+1$, so $M$ spans a hyperplane of $P$. If $X$ is any set of points in $P$, let $\langle X\rangle$ denote the closure of $X$ in $P$. We will now extend $M$ to get $N$, an excluded minor for $\mathcal{K}_{n+1}$. First we will choose points which will not be added to the ground set of $M$, but will enable us to freely place points within $M$. 
Begin by arbitrarily choosing $x_0$ in $P-\langle E(M)\rangle$. Next freely place $x_{n-1}$ with respect to $\langle H_0\rangle$ -- i.e., choose $x_{n-1}$ in $\langle H_0\rangle$ so that $x_{n-1}$ is not spanned by any subset of $E(M)\cup\{x_0\}$ that doesn't span $H_0$. We are able to do this using \cite[Proposition 4.2]{missingaxiom}.

Choose $x_1$ in $\langle H_{n-1}\rangle$ so that it is not spanned by any subset of $E(M)\cup\{x_0,x_{n-1}\}$ that doesn't span $H_{n-1}$. Now choose $x_2$ in $\langle H_0\rangle\cap\langle H_{n-1}\rangle$ so that it is not spanned by any supset of $E(M)\cup\{x_0,x_1,x_{n-1}\}$ unless that subset spans $\langle H_0\rangle\cap\langle H_{n-1}\rangle$. Choose $x_3$ in $\langle H_0\rangle\cap\langle H_{n-1}\rangle$ so that it is not spanned by any subset of $E(M)\cup\{x_0,x_1,x_2,x_{n-1}\}$ unless that subset spans $\langle H_0\rangle\cap\langle H_{n-1}\rangle$. Continue in this way until $x_0,x_1,\ldots,x_{n-1}$ have been chosen. Now choose $r-n+1$ points in the same space, $\langle H_0\rangle\cap\langle H_{n-1}\rangle$ using the same technique. Call this set of points $X$, and note that $X\cup \{x_2,\ldots,x_{n-2}\}$ is an independent set that spans $\langle H_0\rangle\cap\langle H_{n-1}\rangle$.

The points chosen so far, $X\cup\{x_0,\ldots,x_{n-1}\}$, will act as guides for adding points to the ground set of $N$. Add a point $e_1$ to $\langle (X\cup\{x_0,\ldots,x_{n-1}\})-\{x_1,x_2\}\rangle$ so that it is not spanned by any subset of $E(M)\cup X\cup\{x_0,\ldots,x_{n-1}\}$ unless that subset spans $(X\cup\{x_0,\ldots,x_{n-1}\}-\{x_1,x_2\}$. Now add another point to the same space so that it is not spanned by any subset of $E(M)\cup X\cup\{x_0,\ldots,x_{n-1},e_1\}$ unless that subset spans $X\cup\{x_0,\ldots,x_{n-1}\}-\{x_1,x_2\}$. Contine in this way until $r-1$ points have been added to the space. Call this set of $r-1$ points $H_1$. Follow this same method to create $r-1$ points to form the set $H_2$, this time adding the points to the space $\langle(X\cup\{x_0,\ldots,x_{n-1}\})-\{x_2,x_3\}\rangle$. In this way we create $H_1,\ldots,H_{n-2}$ -- i.e. for $i\in\{1,\ldots,n-2\}$, create $H_i$ by freely placing $r-1$ points in the space $\langle(X\cup\{x_0,\ldots,x_{n-1}\})-\{x_i,x_{i+1}\}\rangle$. Note that the points of $H_0$ are in $\langle (X\cup\{x_0,\ldots,x_{n-1}\})-\{x_0,x_1\}\rangle$ and the points of $H_{n-1}$ are in $\langle (X\cup\{x_0,\ldots,x_{n-1}\})-\{x_{n-1},x_0\}\rangle$.

Finally, add a point $p$ freely to $\langle X\rangle$, then add another point $p'$ freely to $P$. Freely place a point $e$ on the line spanned by $p$ and $p'$, $\langle\{p,p'\}\rangle$, then do the same with another point $f$. Let $N$ be the matroid consisting of the points $H_0\cup\ldots H_{n-1}\cup\{e,f\}$.

\begin{lemma}
$N$ is $\mathbb{K}$-representable.
\end{lemma}

This lemma is true by construction.

\begin{lemma}
$H_i\cup\{e,f\}$ is a circuit-hyperplane of $N$ for every $i\in\{0,\ldots,n-1\}$.
\end{lemma}

\begin{proof}
$H_i\cup\{e,f\}$ has $r+1$ points, and by construction is contained in $\langle(X\cup\{p',x_0,\ldots,x_{n-1}\})-\{x_i,x_{i+1}\}\rangle$. This is a rank $r$ space and so $H_i\cup\{e,f\}$ must be dependent. Suppose $H_i$ is dependent for some $i$. Then at some point in constructing $N$, we would have added a point $g$ to already chosen elements of $H_i$ so that the point was contained in $cl(H_i-g)$, i.e. contained in $\langle (X\cup\{x_0,\ldots,x_{n-1}\})-\{x_i,x_{i+1}\}\rangle$. This contradicts every point of $H_i$ being freely placed in the space $\langle (X\cup\{x_o,\ldots,x_{n-1}\})-\{x_i,x_{i+1}\}\rangle$. Thus $H_i$ is independent. Now suppose $H_i\cup\{e\}$ is dependent. Then $e\in cl(H_i)$. That is, $e\in\langle (X\cup\{x_0,\ldots,x_{n-1}\}-\{x_i,x_{i+1}\}\rangle$. This contradicts $e$ being a point on the line spanned by $p$ and $p'$.
Likewise, $H_i\cup \{f\}$ is also independent. We have shown that every subset of $H_i\cup\{e,f\}$ is independent, meaning that $H_i\cup\{e,f\}$ must be a circuit. 

Now suppose $H_i\cup\{e,f\}$ is not a flat. There must be some element $g\in E(N)-(H_i\cup\{e,f\})$ such that $r(H_i\cup\{e,f,g\})=r(H_i\cup\{e,f\})$ -- that is, $g\in cl(H_i\cup\{e,f\})$. This implies $g\in cl(H_i\cup\{e\})$. Let $g\in H_j$ for some $j$. Assume $g\in cl(H_i)$. Then we have that $g\in\langle (X\cup\{x_0,\ldots,x_{n-1}\})-\{x_i,_{i+1}\}\rangle$. This is a contradiction, as $(X\cup\{x_0,\ldots,x_{n-1}\})-\{x_i,x_{i+1}\}$ does not span $(X\cup\{x_0,\ldots,x_{n-1}\})-\{x_j,x_{j+1}\}$.

Now suppose $g\notin cl(H_i)$. By the fourth closure axiom, $e\in cl(H_i\cup\{g\})$. Recall that $p\in cl(H_i)$. As $p$ and $e$ form a line, we must have that $\langle\{p,e\}\rangle\subseteq cl(H_i\cup\{g\})$. Thus $p'\in cl(H_i\cup g)$. As $p'$ was added freely to the projective geometry, the only way this is possible is if $H_i\cup g$ is spanning, which is a contradiction.
\end{proof}

\begin{lemma}
Relaxing $H_0\cup\{e,f\}$ produces a matroid not in $\mathbb{K}_{n+1}$.
\end{lemma}

\begin{proof}
We will show that $(X_1,X_2,\ldots,X_{n+1})=(H_0,\{e,f\},H_1,\ldots,H_{n-1})$ violates inequality $n+1$, i.e.
\begin{align*}
\sum_{i=3}^{n+1} r(X_i)+r(X_1\cup X_2)+r(X_1\cup X_3\cup X_{n+1})+\sum_{i=4}^{n+1} r(X_2\cup X_{i-1}\cup X_i) \\
> r(X_1\cup X_3)+r(X_1\cup X_{n+1})+\sum_{i=3}^{n+1} r(X_2\cup X_i)+\sum_{i=4}^{n+1} r(X_{i-1}\cup X_i)
\end{align*}
$X_i$ is independent by construction, as proven in the previous result, with rank $r-1$. Recall $X_i\cup X_2$ is a circuit-hyperplane for all $i$ as proven in the previous lemma. Note that $X_i\subseteq\langle X\cup\{x_0,\ldots,x_{n-1}\}-\{x_i,x_{i+1}\}\rangle$ for all $i\neq 2$, and that the points were chosen so as to make it an independent set of rank $r-1$.
Take two consecutive sets $X_i, X_j$, where $i,j\neq 2$. 
\begin{equation*}
\begin{split}
r(&X_i\cup X_j) \\
&\leq\quad r(\langle X\cup\{x_0,\ldots,x_{n-1}\}-\{x_i,x_{i+1}\}\rangle \cup\langle X\cup\{x_0,\ldots,x_{n-1}\}-\{x_{i+1},x_j\}\rangle)\\
&= \quad r(\langle X\cup\{x_0,\ldots,x_{n-1}\}-\{x_i,x_{i+1}\}\rangle )+r( \langle X\cup\{x_0,\ldots,x_{n-1}\}-\{x_{i+1},x_j\}\rangle)\\
& \hspace{5 mm}- r(\langle X\cup\{x_0,\ldots,x_{n-1}\}-\{x_i,x_{i+1}\}\rangle \cap\langle X\cup\{x_0,\ldots,x_{n-1}\}-\{x_{i+1},x_j\}\rangle\\
&= \quad (r-1)+(r-1)-(r-2)\\
&= \quad r
\end{split}
\end{equation*}
Now suppose $X_i, X_j$ are inconsecutive. The intersection term will now have rank $r-3$, one less than before, so $r(X_i\cup X_j)=r+1$. Note that these two calculations imply the rank of the union of any three $X_i$ must be $r+1$. We can now show that the inequality above holds:
$$\sum_{i=3}^{n+1} (r-1)+(r+1)+(r+1)+\sum_{i=4}^{n+1} (r+1)
> r+r+\sum_{i=3}^{n+1} r+\sum_{i=4}^{n+1} r$$
Therefore \vect{X}{n+1} is a bad family if and only if
\begin{equation*}
\begin{split}
& (n+1-2)(r-1)+2(r+1)+(n+1-3)(r+1)\\
& \hspace{5 mm}>2r+(n+1-2)r+(n+1-3)r
\end{split}
\end{equation*}
which is true if and only if
$$\begin{array}{ccrcl}
&\quad&(n-1)(r-1)+n(r+1)& >&(2n-1)r\\
\Leftrightarrow&&(2n-1)r+1&>&(2n-1)r\\
\end{array}$$

and this completes the proof.
\end{proof}
Call this relaxation $N'$.
\begin{lemma}
\label{2CH}
Relaxing $H_i\cup\{e,f\}$ in $N$ creates a $\mathbb{K}$-representable matroid.
\end{lemma}

\begin{proof}
Construct $L$ in exactly the same way as $N$, up until the point where $p$ and $p'$ are added. Instead of adding $p$ to $\langle X\rangle$, add it freely to $\langle X\cup\{x_1,\ldots,x_i\}\rangle$. Now add $p'$ freely to $\langle X\cup\{x_{i+1},\ldots,x_{n-1},x_0\}\rangle$. Then add $e$ and $f$ freely to the line $\langle\{p,p'\}\rangle$ as before. This matroid $L$ is $\mathbb{K}$-representable by construction. We will show that it is the same as the matroid obtained from $N'$ by relaxing $H_i\cup\{e,f\}$, referred to as $N''$. 

Note that by \cite[Proposition 3.3.5]{Oxley}, we have that $N\backslash e\backslash f=N'\backslash e\backslash f=N''\backslash e\backslash f$, and also that $N\backslash e\backslash f =L\backslash e\backslash f$ by construction. If $Z\subseteq E(N\backslash e)$ spans $f$, then, as we chose $f$ to be freely placed on the line spanned by $p$ and $p'$, $Z$ must span $\langle\{p,p'\}\rangle$. This implies that $p'\in\langle Z\rangle$. As $p'$ was freely placed in $E(N\backslash e)$, this implies $Z$ is spanning. Thus $N\backslash e$ is a free extension of $N\backslash e\backslash f$ by the element $f$.

Now suppose $Z\subseteq E(L\backslash e)$ spans $f$. Then again we have that $\langle\{p,p'\}\rangle\subseteq\langle Z\rangle$. As this gives that $p\in\langle Z\rangle$, we have that $X\cup\{x_1,\ldots,x_i\}\subseteq\langle Z\rangle$ by the way $p$ was chosen in the construction of $L$. As $p'\in\langle Z\rangle$, we have that $X\cup\{x_{i+1},\ldots,x_{n-1},x_0\}\subseteq\langle Z\rangle$. Putting these together gives $X\cup\{x_0,\ldots,x_{n-1}\}\subseteq\langle Z\rangle$. As $X\cup\{x_0,\ldots,x_{n-1}\}$ was chosen so as to be a basis of $L$, $Z$ must be spanning. This tells us that $f$ is freely placed in $L\backslash e$, so $L\backslash e$ is a free extension of $L\backslash e\backslash f$ by the element $f$. As $L\backslash e\backslash f=N\backslash e\backslash f$, we have that $L\backslash e=N\backslash e$. Note also that $N\backslash e=N'\backslash e=N''\backslash e$, so $L\backslash e=N''\backslash e$. The same argument shows that $L\backslash f=N''\backslash f$.

Suppose $L\neq N''$. There must exist a set $A$ which is a non-spanning circuit in $N''$ and independent in $L$ or vice versa. The previous results tell us that $e,f\in A$, as otherwise $A$ would have the same rank in both matroids. 

Suppose $A$ is a non-spanning circuit in $L$. Say that the points in $E(L)-\{e,f\}$ were added in the order $e_1,....,e_{t}$. Let $e_j$ be the largest element of $A$ according to this ordering, and let $e_j\in H_k$. As $e_j$ was freely placed, $A$ must span $\langle (X\cup\{x_0,\ldots,x_{n-1}\})-\{x_k,x_{k+1}\}\rangle$. This means that $(A-H_k) \cup 
(X\cup \{x_0,...,x_{n-1}\})-\{x_k,x_{k+1}\})$ spans the same set as $A$. Suppose the last element added to $A$ before those in $H_k$ is $e_l\in H_j$ where $j<k$. Then $(A-H_k) \cup (X\cup \{x_0,...,x_{n-1}\})-\{x_k,x_{k+1}\}$ spans an element from $H_j$, and by construction, as every element in the set above was added before $H_j$, we see that this set spans $(X\cup \{x_0,...,x_{n-1}\})-\{x_j,x_{j+1}\}$. Thus $A$ spans both $\langle (X\cup\{x_0,\ldots,x_{n-1}\})-\{x_k,x_{k+1}\}\rangle$ and $\langle (X\cup\{x_0,\ldots,x_{n-1}\})-\{x_j,x_{j+1}\}\rangle$. As shown in the previous lemma, if $H_j$ and $H_k$ are inconsecutive, $A$ will have rank $r+1$ and be spanning. $H_j$ and $H_k$ thus must be consecutive in order for $A$ to be non-spanning. Take a dependent subset of $H_j\cup H_k$ in $L$. As this subset does not include $e$ nor $f$, it has the same rank in $L\backslash e$. Likewise, the rank of the subset in $N''$ has the same rank in $N''\backslash e$. As we have already shown $L\backslash e=N''\backslash e$, we have that any dependent subset of $H_j\cup H_k$ in $L$ is also dependent in $N''$. This contradicts the assumption that $A$ is independent in $N''$. If there is no point contained in a set $H_j$ where $j<k$, in order for $A$ to be a circuit, $A$ must be equal to $H_k\cup\{e,f\}$, where $k\notin\{0,i\}$, as any subset of this is independent in $L$, as proved in the next lemma.

\begin{lemma}
$H_k\cup\{e,f\}$ is a circuit of $L$ for all $k\in\{1,\ldots,i-1,i+1,\ldots,n\}$.
\end{lemma}

\begin{proof}
Consider $H_k\cup\{e,f\}$. Recall that $p$ was added freely to $\langle X\cup\{x_1,\ldots,x_i\}\rangle$ while $p'$ was added freely to $\langle X\cup\{x_{i+1},\ldots,x_{n-1},x_0\}\rangle$. $H_k$ is contained in $\langle (X\cup\{x_0,\ldots,x_{n-1}\}-\{x_k,x_{k+1}\}\rangle$. When $i\leq k$, this subspace spans $\langle X\cup\{x_1,\ldots,x_i\}\rangle$ and so spans $p$. When $i\geq k$, this subspace spans $\langle X\cup\{x_{i+1},\ldots,x_{n-1},x_0\}\rangle$ and so spans $p'$. As $e$ and $f$ were freely placed on the line spanned by $p$ and $p'$, in either case we have that $H_k\cup\{e,f\}\in cl(H_k\cup\{e\})$ and so $H_k\cup\{e,f\}$ is dependent.

Suppose $H_k$ is dependent for some $k$. Then at some point in constructing $L$, we would have added a point $g$ to already chosen elements of $H_k$ so that the point was contained in $cl(H_k-g)$, i.e. contained in $\langle (X\cup\{x_0,\ldots,x_{n-1}\})-\{x_k,x_{k+1}\}\rangle$. This contradicts each of the $r-1$ points of $H_k$ being freely placed in the rank $r-1$ space $\langle (X\cup\{x_o,\ldots,x_{n-1}\})-\{x_k,x_{k+1}\}\rangle$. Thus $H_k$ is independent. 

Now suppose $H_k\cup\{e\}$ is dependent. Then $e\in cl(H_k)$ -- that is, $e\in\langle (X\cup\{x_0,\ldots,x_{n-1}\}-\{x_k,x_{k+1}\}\rangle$. However, $e$ was freely placed on the line spanned by $p$ and $p'$. Thus if $H_k$ spans $e$, it must span this line. As the line itself is free in the matroid, for this to happen, $H_k$ must be spanning -- contradiction.
Likewise, $H_k\cup \{f\}$ is also independent. We have shown that every subset of $H_k\cup\{e,f\}$ is independent, meaning that $H_k\cup\{e,f\}$ must be a circuit. 
\end{proof}

Thus $H_k\cup\{e,f\}$ is dependent in $L$. We have shown that it is also dependent in $N''$, so again have a contradiction to $A$ being independent in $L$. The same argument shows that if $A$ is dependent in $N''$, $A$ is also dependent in $L$. Thus $L=N''$.
\end{proof}
We constructed $N$ to be representable, so $N$ must satisfy every Kinser inequality. In particular, it must be contained inside $\mathcal{K}_{n+1}$. Next we have shown that if we relax a single circuit-hyperplane of $N$, the resulting matroid $N'$ has a bad family for $\mathcal{K}_{n+1}$. We will now show that $N'$ is in fact an excluded minor for $\mathcal{K}_{n+1}$ -- that is, we will show that each proper minor of $N'$ is representable and thus is contained in $\mathcal{K}_{n+1}$.

First suppose that $x\in H_j$ where $j\neq 0$. Let $N''=N'$ with the circuit-hyperplane $H_j\cup\{e,f\}$ relaxed. By \cite[Proposition 3.3.5]{Oxley}, $N''\backslash x=N'\backslash x$. As $N''$ is $\mathbb{K}$-representable by Theorem \ref{2CH}, and representability is preserved under minors, $N'\backslash x$ is $\mathbb{K}$-representable. Say $l\in\{0,\ldots,n-1\}-\{0,j\}$. Now let $N''=N'$ with $H_l\cup\{e,f\}$ relaxed. Also by \cite[Proposition 3.3.5]{Oxley}, we have that $N''/x=N'/x$, and so $N'/x$ is $\mathbb{K}$-representable.

Next, suppose $x\in H_0$. As $N'=N$ with the circuit-hyperplane $H_0\cup\{e,f\}$ relaxed, we have that $N'\backslash x=N\backslash x$, so $N'\backslash x$ is $\mathbb{K}$-representable. Let $N''=N'$ with $H_i\cup\{e,f\}$ relaxed. We have that $N''/x=N'/x$, so $N'/x$ is $\mathbb{K}$-representable.

Now suppose $x$ is equal to $e$. As $e$ and $f$ were freely placed on the line spanned by $p$ and $p'$, the same argument as follows works for $x=f$. We have that $N'\backslash e=N\backslash e$, so $N'\backslash e$ is $\mathbb{K}$-representable. 

Finally, consider $N'/e$. Take some $z\in H_0$. Recall that $N'=N$ with the circuit-hyperplane $H_0\cup\{e,f\}$ relaxed. Note that $N'/e$ is obtained from $N/e$ by relaxing $H_0\cup\{f\}$. This gives us that $N'/e\backslash z=N/e\backslash z$, as deleting $z$ effectively undoes the relaxation. As $N$ is $\mathbb{K}$-representable, and thus $N/e\backslash z$ is $\mathbb{K}$-representable, $N'/e\backslash z$ is also $\mathbb{K}$-representable. Let $Z\subseteq E(N'/e)$ be such that $z\notin Z$ and $z\in cl_{N'/e}(Z)$. $N'/e$ is a relaxation of $N/e$ which can only affect closures in so far as that some may contain additional elements in $N/e$, so $z\in cl_{N/e}(Z)$. This implies that $z\in cl_{N}(Z\cup\{e\})$ by \cite[Proposition 3.1.11]{Oxley}. Due to the way $H_0$ was constructed, we thus have that $\langle Z\cup\{e\}\rangle\supseteq (X\cup\{x_0,\ldots,x_{n-1}\})-\{x_0,x_1\}$. As we have that $z\in cl_{N}(Z\cup\{e\})$ and all elements of $H_0$ are freely placed in the relevant subspace, $Z\cup\{e,f\}$ must thus also span every other element of $H_0$. As $e$ and $f$ were freely placed on the line spanned by $p$ and $p'$, we also have that $f\in cl_{N}(Z\cup\{e\})$. Thus, in $N$, $H_0\cup\{e,f\}$ is contained in $\langle Z\cup\{e\}\rangle$. As $H_0\cup\{e,f\}$ is a circuit-hyperplane, this implies that either $Z\cup\{e\}$ is spanning in $N$ or that $Z\cup\{e\}=H_0\cup\{e,f\}$. If $Z\cup\{e\}=H_0\cup\{e,f\}$, we have a contradiction to the assumption that $z\notin Z$. We thus have that $Z\cup\{e\}$ is spanning in $N$. This means that $Z\cup\{e\}$ is also spanning in $N'$, and, as $r(N'/e)=r(N')-1$, that $Z$ is spanning in $N'/e$. We have that $z$ is only in the closure of a subset of $N'/e$ when that subset spans $N'/e$ -- that is, we have shown that $z$ is freely placed in $N'/e$. Thus $N'/e$ is a free extension of $N'/e\backslash z$ by $z$. As $N'/e\backslash z$ is $\mathbb{K}$-representable and this fact is preserved under free extentions, we have that $N'/e$ is $\mathbb{K}$-representable. 

We have now shown that every minor of $N'$ is $\mathbb{K}$-representable and so contained in $\mathcal{K}_{n+1}$, making $N'$ an excluded minor for $\mathcal{K}_{n+1}$. This completes the proof of Theorem \ref{MNW}.
\end{proof}


\chapter{Conjectures}

Finally, we give some conjectures on the hierachy of the Kinser classes.

\begin{conj}
Let $n>5$. $\mathcal{K}_n\neq\mathcal{K}_n^*$.
\end{conj}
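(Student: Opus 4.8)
The plan is to extend the argument of the previous chapter, which establishes $\kin{5}^-\in\mathcal{K}_5^*$, to every $n$: namely, to prove that $\kin{n}^-\in\mathcal{K}_n^*$ for all $n\ge 5$, equivalently that the dual $(\kin{n}^-)^*$ satisfies the $n$-th Kinser inequality. Granting this, Lemma \ref{kin-} gives $\kin{n}^-\notin\mathcal{K}_n$, so $\kin{n}^-$ lies in $\mathcal{K}_n^*\setminus\mathcal{K}_n$ and the two classes differ, which is exactly the conjecture for $n>5$. (Since $\mathcal{K}_n\subseteq\mathcal{K}_{n-1}$ by Lemma \ref{supseteq} and $\kin{n}^-\in\mathcal{K}_{n-1}-\mathcal{K}_n$ by Lemma \ref{gaps}, the candidate witness is exactly the one already in play.)

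To prove $\kin{n}^-\in\mathcal{K}_n^*$, suppose for contradiction that $(\kin{n}^-)^*$ has a bad family \vect{X}{n}, which by Lemma \ref{flats} we may take to consist of flats, and carry out the structural reductions of the $n=5$ proof in their natural generality. By Lemma \ref{relax}, $\overline{V_1\cup V_2}=V_3\cup\cdots\cup V_n$ is a relaxed circuit-hyperplane of $(\kin{n}^-)^*$ whose tightening produces the representable matroid $\kin{n}^*$; since tightening changes only the rank of that one set, $V_3\cup\cdots\cup V_n$ must be a term on the left-hand side. For each $i\in\{3,\ldots,n\}$ the set $V_2\cup V_i$ remains a circuit-hyperplane of $\kin{n}^-$, so $\overline{V_2\cup V_i}$ is a circuit-hyperplane of $(\kin{n}^-)^*$ whose relaxation produces $(\kin{n}_i^=)^*$, which is representable by Lemma \ref{kin=} together with the fact that duals of representable matroids are representable; hence these $n-2$ sets, none of which meet $V_2$, must all be terms on the right-hand side. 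Next, a deletion argument using the transversal presentation of $M_{n+1}$ to show the elements of $V_1$ are freely placed in $\kin{n}^-/e$ for $e\in V_2$ forces every element of $V_2$ into some $X_k$; and a counting argument on the right-hand-side adjacency graph forces all of $V_2$ into a single $X_k$ and, in the cases not immediately excluded, pins $\overline{V_1\cup V_2}$ down to the term $X_1\cup X_2$ (ruling out a singleton $X_j$ and the triples $X_1\cup X_3\cup X_n$ and $X_2\cup X_{i-1}\cup X_i$, each time because too few right-hand-side edges survive for the $n-2$ forced circuit-hyperplanes). Using the reversal symmetry $X_i\leftrightarrow X_{n+3-i}$ of the $n$-th inequality, which fixes $X_1$ and $X_2$, one reduces to $k\in\{1,2\}$ together with one representative of each orbit of $\{3,\ldots,n\}$ under that involution.

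Within each such case one then determines which surviving right-hand-side edges carry the $n-2$ forced circuit-hyperplanes and which of the blocks $V_1,V_3,\ldots,V_n$ occupy which $X_j$ (constrained by the flat condition and by the known circuit-hyperplane, hyperplane and spanning structure of $\kin{n}^-$), and checks each resulting configuration by evaluating both sides of the dual inequality through $r^*(Y)=|Y|+r(\overline{Y})-r(\kin{n}^-)$, together with the monotonicity observation that enlarging the free set $Z\subseteq E$ sitting inside the $V_2$-block only decreases the left-hand side relative to the right-hand side — exactly as in each subcase of the previous chapter.

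The hard part will be keeping this case analysis finite. As $n$ grows, the number of distributions of the $n-2$ forced circuit-hyperplanes among the right-hand-side edges, and the number of admissible placements of the blocks $V_i$ subject to the flat and closure conditions, is not visibly bounded; this is the same obstruction that makes verification of Kinser inequalities computationally expensive. A real proof therefore needs a uniform treatment: one would want to show that the ranks of $\kin{n}^-$ occurring in the dual inequality depend only on coarse combinatorial data — which blocks are met, with what multiplicities, and which consecutive block pairs occur — so that the verification of ``left-hand side $\le$ right-hand side'' collapses to a short, $n$-parametrised list of integer inequalities in the spirit of the computation $2n^2-5n+4\not\le 2n^2-5n+3$ from Lemma \ref{gaps}; or, better still, an induction on $n$ converting a bad family for $(\kin{n}^-)^*$ into one for $(\kin{n-1}^-)^*$, in analogy with the argument of Lemma \ref{supseteq}. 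Until such a reduction is found the statement remains conjectural.
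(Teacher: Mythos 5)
This statement has no proof in the paper: it is stated as a conjecture, and the paper explicitly says that, given the difficulty of the $n=5$ case carried out in Chapter~5, ``proving this result in general would involve an even greater amount of work.'' Your outline matches the approach the paper itself gestures at: the very next conjecture in the paper is $\kin{n}^-\in\mathcal{K}_n^*-\mathcal{K}_n$, which is exactly your candidate witness; your structural reductions (tightening $\overline{V_1\cup V_2}$ to force one left-hand term, relaxing each $\overline{V_2\cup V_i}$ to force $n-2$ right-hand terms, the free-placement argument for elements of $V_1$ in $\kin{n}^-/e$, and the reversal symmetry $X_i\leftrightarrow X_{n+3-i}$) generalise the $n=5$ argument verbatim. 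You are right that these steps are sound and that they do not on their own close the proof: the residual case analysis — which right-hand edges carry the forced circuit-hyperplanes, how the blocks $V_1,V_3,\ldots,V_n$ distribute among the $X_j$, and the final arithmetic with the slack set $Z$ — does not obviously terminate uniformly in $n$. Your candid conclusion that a uniform reduction or an induction on $n$ is needed, and that without it the statement remains conjectural, is exactly where the paper leaves matters; the complexity theorem of Chapter~6 is precisely the paper's way of flagging that a brute-force generalisation of Chapter~5 is infeasible. In short: you have not proved the conjecture, you know you have not, and your diagnosis of why agrees with the paper's own.
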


As shown in the previous chapter, verifying that a matroid satisfies a Kinser inequality is very difficult. Given the amount of difficulty involved in proving that the fifth Kinser class is not dual closed, proving this result in general would involve an even greater amount of work. Based on that case, however, we give a strengthening of the above conjecture.

\begin{conj}
$\kin{n}^-\in\mathcal{K}_n^*-\mathcal{K}_n$
\end{conj}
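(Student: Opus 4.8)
The plan is to prove the conjecture $\kin{n}^- \in \mathcal{K}_n^* - \mathcal{K}_n$ by splitting it into the two containments. The second, $\kin{n}^- \notin \mathcal{K}_n$, is already established in Lemma \ref{kin-} (equivalently the first half of Lemma \ref{gaps}), so the entire burden is to show $\kin{n}^- \in \mathcal{K}_n^*$, i.e. that the dual $(\kin{n}^-)^*$ satisfies Kinser inequality $n$ for every family of subsets. This is precisely the $n=5$ statement proven in the chapter ``$\mathcal{K}_5 \neq \mathcal{K}_5^*$'', so the task is to generalise that argument from rank $5$ to arbitrary rank $r = n \geq 4$.

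First I would set up the structural reductions exactly as in the rank-$5$ proof. Assume for contradiction that $(\kin{n}^-)^*$ has a bad family $\vect{X}{n}$ with each $X_i$ a flat (by Lemma \ref{flats}) in a minor-minimal witness (by Lemma \ref{minors}, so the family covers $E$). Using Lemma \ref{relax} and Lemma \ref{relax2}, translate the relaxations/tightenings available in $\kin{n}^-$ into operations on $(\kin{n}^-)^*$: tightening $\ol{V_1\cup V_2}$ in the dual recovers $\kin{n}^*$ (representable), and relaxing each $\ol{V_2\cup V_i}$ in the dual also gives a representable matroid by Lemma \ref{kin=}. Since each such operation changes the rank of exactly one subset, and a representable matroid cannot have a bad family, one deduces that $\ol{V_2\cup V_i} = V_1\cup(\bigcup_{j\neq i} V_j)$ must be a term on the right-hand side of inequality $n$ (for each $i\in\{3,\dots,n\}$), and that $\ol{V_1\cup V_2} = V_3\cup\cdots\cup V_n$ must be a term on the left-hand side. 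Then, as in the rank-$5$ case, a free-placement argument on the elements of $V_1$ in $\kin{n}^-/e$ for $e\in V_2$ (using the transversal description of $M_{n+1}$ and the fact that truncation does not affect non-spanning sets, together with Lemma \ref{free}) forces both elements of $V_2$ to lie in a single set $X_\ell$ of the bad family.

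Next I would run the case analysis on which $X_\ell$ contains $V_2$. Here the combinatorial graph/edge argument generalises cleanly: the $n-2$ circuit-hyperplanes $\ol{V_2\cup V_i}$ that must sit on the right-hand side avoid $V_2$, hence cannot be incident with $X_\ell$, and $V_3\cup\cdots\cup V_n$ on the left likewise cannot involve $X_\ell$; counting available edges in the right-hand-side graph of inequality $n$ (which has $n-2$ ``circuit-hyperplane'' edges to place among roughly $2(n-1)-3$ edges, minus those through $X_\ell$) eliminates all positions except those symmetric to $X_\ell = X_3$, and pins down $V_3\cup\cdots\cup V_n = X_1\cup X_2$. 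The remaining subcases determine each $X_i$ up to the flat-closure freedom of adding a few extra elements of one $V_j$-block, exactly as subcases 1.1--1.4 and 2.1--2.2 do for $r=5$. For each surviving configuration one writes $r^* = |\cdot| + r(\ol{\cdot}) - r(M)$, evaluates the cardinality and corank terms using that unions of two inconsecutive $V_i$-blocks are spanning while $V_i\cup V_{i+1}$ and $V_1\cup V_3$ are hyperplanes, and checks the resulting numerical inequality, then shows that enlarging the ``free'' set $Z\subseteq E(M)$ inside $X_\ell$ can only decrease the left side faster than the right.

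The main obstacle is bookkeeping: for general $n$ the numerical evaluations in each subcase become $r$-dependent polynomials rather than the fixed integers $50, 52, 47, \dots$ of the rank-$5$ proof, and one must verify that every surviving configuration yields an inequality of the shape (left side) $-$ (right side) $= c(r) \leq 0$ with the correction from $Z$ monotone in the right direction. I expect the cleanest route is to observe that $\kin{n}$ and its relaxations are, up to the placement of $V_2$ and the one ``guide'' block, built uniformly across $r$ from the same transversal recipe, so the rank data of every relevant union depends on $r$ only through the obvious linear formulas ($r(V_i) = r-2$, $r(V_i\cup V_j) = r$ or $r-1$, unions of three blocks are spanning of rank $r$, etc.); substituting these turns each subcase bound into an identity like $2n^2 - 5n + 4 \not\leq 2n^2 - 5n + 3$, mirroring the computation already displayed in Lemma \ref{gaps}. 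Verifying that the finitely many combinatorial configurations for general $n$ are genuinely the same ones handled for $n=5$ (no new adjacency pattern appears once $n > 5$) is the step requiring the most care, and is where a careful induction or a uniform ``edge-counting forces the $X_3$-symmetric case'' lemma would do the real work.
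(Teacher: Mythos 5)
The statement you are trying to prove is a \emph{conjecture} in the paper, not a theorem: it appears in the ``Conjectures'' chapter, and the authors explicitly say that ``proving this result in general would involve an even greater amount of work'' than the rank-$5$ case. The paper offers no proof, only the $n=5$ instance (the chapter proving $\kin{5}^-\in\mathcal{K}_5^*$) as evidence. So there is nothing in the paper for your argument to match against; the question is whether your sketch closes the gap the paper leaves open, and it does not.

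Your outline is a faithful summary of the $n=5$ proof strategy, but the decisive step --- that ``the combinatorial graph/edge argument generalises cleanly'' and that ``no new adjacency pattern appears once $n>5$'' --- is asserted, not argued, and you yourself flag it at the end as ``the step requiring the most care'' and ``where the real work'' would go. That is exactly the open gap, and it is not a bookkeeping nuisance: the number of terms in inequality $n$ grows as $2n-3$, the number of circuit-hyperplanes $\ol{V_2\cup V_i}$ that must land on the right-hand side grows as $n-2$, and the adjacency structure of the right-hand-side graph changes with $n$ (it is not simply the $5$-vertex configuration dilated). There is no a priori reason the path-counting and position-forcing arguments that pinned down $X_1\cup X_2 = V_3\cup\cdots\cup V_n$ and produced a fixed handful of subcases for $n=5$ will produce a bounded, uniform list of subcases for all $n$, nor that the numerical evaluation in each subcase (which for $n=5$ used concrete values like $50$, $52$, $47$) reduces to a single closed-form identity. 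Identifying and proving the ``uniform edge-counting lemma'' you invoke at the end would be the actual content of a proof; without it, what you have is a plausible program mirroring the paper's own stated hope, not a proof.

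One smaller inaccuracy: in the $n=5$ proof the free-placement argument on $\kin{5}^-/e$ only shows that each element of $V_2$ lies in \emph{some} $X_i$; a separate term-counting argument (about which circuit-hyperplanes can appear on the right-hand side) is what forces $e$ and $f$ into the \emph{same} $X_\ell$. Your summary conflates these two steps, and the second one is again an $n$-dependent count that would need to be re-verified for general $n$.
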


One further question about the structure of the hierarchy is how each dual class sits within the previous Kinser class. There are two possibilities here, and we conjecture that the following is true.

\begin{conj}
\label{conj}
Let $n>4$. $\mathcal{K}_{n+1}^*\subseteq\mathcal{K}_n$.
\end{conj}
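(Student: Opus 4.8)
Since $\mathcal{K}_{n+1}^{*}\subseteq\mathcal{K}_n$ means precisely that $M^{*}\in\mathcal{K}_n$ whenever $M\in\mathcal{K}_{n+1}$, I would follow the template of the proof that $\mathcal{K}_4=\mathcal{K}_4^{*}$. Note first that the analogous statement $\mathcal{K}_5^{*}\subseteq\mathcal{K}_4$ is immediate, because $\mathcal{K}_5^{*}\subseteq\mathcal{K}_4^{*}=\mathcal{K}_4$ by Lemma~\ref{supseteq} and self-duality of $\mathcal{K}_4$; for $n\geq 5$ this shortcut fails, since $\mathcal{K}_n\neq\mathcal{K}_n^{*}$, so genuinely more is needed. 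By Lemma~\ref{minors} the class $\mathcal{K}_n$ is minor-closed, so if $\mathcal{K}_{n+1}^{*}\not\subseteq\mathcal{K}_n$ there is a minor $N$ of some $M\in\mathcal{K}_{n+1}$ for which $N^{*}$ is an excluded minor for $\mathcal{K}_n$; by minor-closedness of $\mathcal{K}_{n+1}$ we still have $N\in\mathcal{K}_{n+1}$, so it suffices to derive a contradiction from $N\in\mathcal{K}_{n+1}$ together with a bad family $X_1,\dots,X_n$ for inequality $n$ in $N^{*}$.

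The next step is to show, exactly as in the $\mathcal{K}_4$ case, that a bad family of an excluded minor must \emph{partition} $E(N^{*})$: the covering half is the usual deletion argument, while the disjointness half requires generalising the stable-term count — for an element $x\in X_i\cap X_j$ one contracts $x$ and checks how many of the $2n-3$ right-hand terms of inequality $n$ avoid both $X_i$ and $X_j$, exhibiting a stable term on each side and hence a bad family in $N^{*}/x$. With $X_1,\dots,X_n$ a partition, substitute $r^{*}(Y)=|Y|+r_N(E-Y)-r(N)$ into inequality $n$ for $N^{*}$. The $2n-3$ copies of $-r(N)$ on each side cancel, and since the $X_i$ are pairwise disjoint every cardinality term splits additively; a short coefficient count (the coefficient of each $|X_i|$ is the same on both sides) shows all cardinality terms cancel too. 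What remains is an inequality purely among the quantities $r_N\bigl(\overline{\bigcup_{i\in S}X_i}\bigr)=r_N\bigl(\bigcup_{i\notin S}X_i\bigr)$ — ranks in $N$ of unions of the partition blocks.

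The crux is then to exhibit an explicit family $Y_1,\dots,Y_{n+1}$ of unions of the $X_i$ so that Kinser inequality $n+1$ for $Y_1,\dots,Y_{n+1}$ in $N$, possibly supplemented by a few instances of submodularity (Lemma~\ref{rankaxioms}, R3), contradicts the inequality obtained above; since $N\in\mathcal{K}_{n+1}$ this is the contradiction. For $n=4$ the dual of inequality $4$ on a partition is again inequality $4$ after relabelling, and one simply takes $Y_i=\overline{X_a\cup X_b\cup X_c}$; for general $n$ no such clean relabelling can exist, since inequality $n+1$ has $2(n+1)-3=2n-1$ terms per side whereas the dual of inequality $n$ has only $2n-3$, so the match must be forced using degenerate choices (repeated or empty $Y_i$, which collapse the surplus terms) or by absorbing the discrepancy into submodularity corrections. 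Producing this certificate — a concrete non-negative combination of inequality $n+1$ and the polymatroid axioms that, after the dual substitution, reproduces inequality $n$ — is the main obstacle, and is essentially why the statement remains a conjecture. A plausible alternative is to establish the corresponding purely combinatorial identity at the level of integer polymatroids by a Farkas/LP certificate and then transfer it to matroids via the excluded-minor reduction, which is exactly what makes the cardinality terms disappear.
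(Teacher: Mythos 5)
The statement you were given appears in the thesis only as Conjecture~\ref{conj}; the paper offers no proof of it, so there is nothing to compare your approach against. Your submission correctly recognises this and is honest that it is an attack plan rather than a proof, so I will assess whether the plan is sound and where the real difficulties lie.

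The preliminary observations are correct: the $n=4$ instance $\mathcal{K}_5^*\subseteq\mathcal{K}_4$ really is immediate from $\mathcal{K}_5\subseteq\mathcal{K}_4$ (Lemma~\ref{supseteq}) together with the paper's self-duality result $\mathcal{K}_4=\mathcal{K}_4^*$, which explains why the conjecture is restricted to $n>4$. The reduction to an excluded minor whose bad family must cover the ground set follows the same deletion argument as in the $\mathcal{K}_4$ case, and the $r^*$-substitution with cancellation of the $-r(N)$ and cardinality terms is routine once the partition property is in hand. But two genuine gaps remain that you flag but do not close. First, the disjointness half of the partition lemma does not generalise verbatim: in the $\mathcal{K}_4$ proof it is essential that, for any pair $i,j$, at most one of the five right-hand terms avoids both $X_i$ and $X_j$, so that exactly one stable term exists and lies opposite a stable singleton; for $n\geq 5$ the terms $X_{i-1}\cup X_i$ with $i=4,\dots,n$ already give $n-3\geq 2$ right-hand terms avoiding $X_1$ and $X_2$, so the counting argument needs new input before one can conclude that a bad family of an excluded minor for $\mathcal{K}_n$ partitions the ground set. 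Second, and more seriously, the arithmetic does not match: the dual of inequality $n$ on a partition produces $2n-3$ rank-of-complement terms per side, while inequality $n+1$ has $2n-1$ terms. You gesture at collapsing the surplus via degenerate or repeated $Y_i$ and absorbing the difference with submodularity, but no explicit non-negative combination is exhibited, and it is not evident that one exists. Without such a certificate the argument does not go through, which is precisely why the paper leaves the statement as a conjecture.
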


The following conjecture is true if Conjecture \ref{conj} is as well. 

\begin{conj}
$\mathcal{K}_\infty^*=\mathcal{K}_\infty$
\end{conj}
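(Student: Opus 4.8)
The plan is to derive the identity from Conjecture \ref{conj} by purely formal manipulation of the class operations, using three elementary facts about matroid duality. Since $M^{**}=M$ for every matroid $M$, the assignment $\mathcal{C}\mapsto\mathcal{C}^*$ on classes of matroids is an involution, it is monotone ($\mathcal{A}\subseteq\mathcal{B}$ implies $\mathcal{A}^*\subseteq\mathcal{B}^*$), and it commutes with intersections ($\bigl(\bigcap_i\mathcal{C}_i\bigr)^*=\bigcap_i\mathcal{C}_i^*$); each of these is immediate from the definition of $\mathcal{C}^*$ and the fact that $M\mapsto M^*$ is a bijection. In particular $\mathcal{K}_\infty^*=\bigl(\bigcap_{i\geq4}\mathcal{K}_i\bigr)^*=\bigcap_{i\geq4}\mathcal{K}_i^*$. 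Also, by Lemma \ref{supseteq} the classes form a descending chain $\mathcal{K}_4\supseteq\mathcal{K}_5\supseteq\cdots$, so $\mathcal{K}_\infty=\bigcap_{i\geq5}\mathcal{K}_i$ as well.

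Assuming Conjecture \ref{conj}, I would first prove the inclusion $\mathcal{K}_\infty^*\subseteq\mathcal{K}_\infty$. Fix $i\geq5$. Then $\mathcal{K}_\infty^*=\bigcap_{j\geq4}\mathcal{K}_j^*\subseteq\mathcal{K}_{i+1}^*$, since $i+1\geq4$, and $\mathcal{K}_{i+1}^*\subseteq\mathcal{K}_i$ by Conjecture \ref{conj} with $n=i>4$. Hence $\mathcal{K}_\infty^*\subseteq\mathcal{K}_i$ for every $i\geq5$, so $\mathcal{K}_\infty^*\subseteq\bigcap_{i\geq5}\mathcal{K}_i=\mathcal{K}_\infty$.

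For the reverse inclusion I would not argue directly but instead apply the involution to the inclusion just obtained: from $\mathcal{K}_\infty^*\subseteq\mathcal{K}_\infty$ and monotonicity, $(\mathcal{K}_\infty^*)^*\subseteq\mathcal{K}_\infty^*$, and $(\mathcal{K}_\infty^*)^*=\mathcal{K}_\infty$, so $\mathcal{K}_\infty\subseteq\mathcal{K}_\infty^*$. The two inclusions together give $\mathcal{K}_\infty^*=\mathcal{K}_\infty$.

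All of the work is carried by Conjecture \ref{conj}; the only step that is not entirely routine is the use of the duality involution to obtain the second inclusion for free, rather than attempting to show $\mathcal{K}_\infty\subseteq\mathcal{K}_\infty^*$ from scratch. Accordingly, the genuine obstacle lies in Conjecture \ref{conj} itself: a proof of it would presumably need to analyse, in the style of the $\mathcal{K}_4=\mathcal{K}_4^*$ argument, how a bad family for inequality $n$ in a dual excluded minor is forced to align with a partition-type structure on the ground set — but now against the combinatorics of the $n$-vertex Kinser graph rather than the five-term Ingleton configuration — and the complexity result of the previous chapter suggests such an analysis may be unavoidably intricate.
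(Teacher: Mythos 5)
Your argument is correct and matches the paper's: both derive the identity from Conjecture \ref{conj} by exploiting that $M\mapsto M^*$ is a monotone involution on classes of matroids, so a one-sided inclusion between $\mathcal{K}_\infty$ and $\mathcal{K}_\infty^*$ automatically yields the other. Your version is, if anything, slightly cleaner --- by restricting to $\bigcap_{i\geq 5}\mathcal{K}_i$ you only ever invoke Conjecture \ref{conj} in its stated range $n>4$, whereas the paper's sketch (starting from some $n$ with $M\notin\mathcal{K}_n^*$) implicitly falls back on the separately proved fact $\mathcal{K}_4=\mathcal{K}_4^*$ when $n=4$.
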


To see that this follows from Conjecture \ref{conj}, assume $M\in\mathcal{K}_\infty$, but $M\not\in\mathcal{K}_\infty^*$. Then there exists an integer $n$ such that $M\not\in\mathcal{K}_n^*$. However, this contradicts Conjecture \ref{conj}, which gives that $M\in\mathcal{K}_{n+1}\subseteq\mathcal{K}_n^*$.

Assuming these conjectures to hold true, we have a final diagram of the hierarchy. 

\begin{figure}[h]
\centering
\includegraphics[scale=0.6]{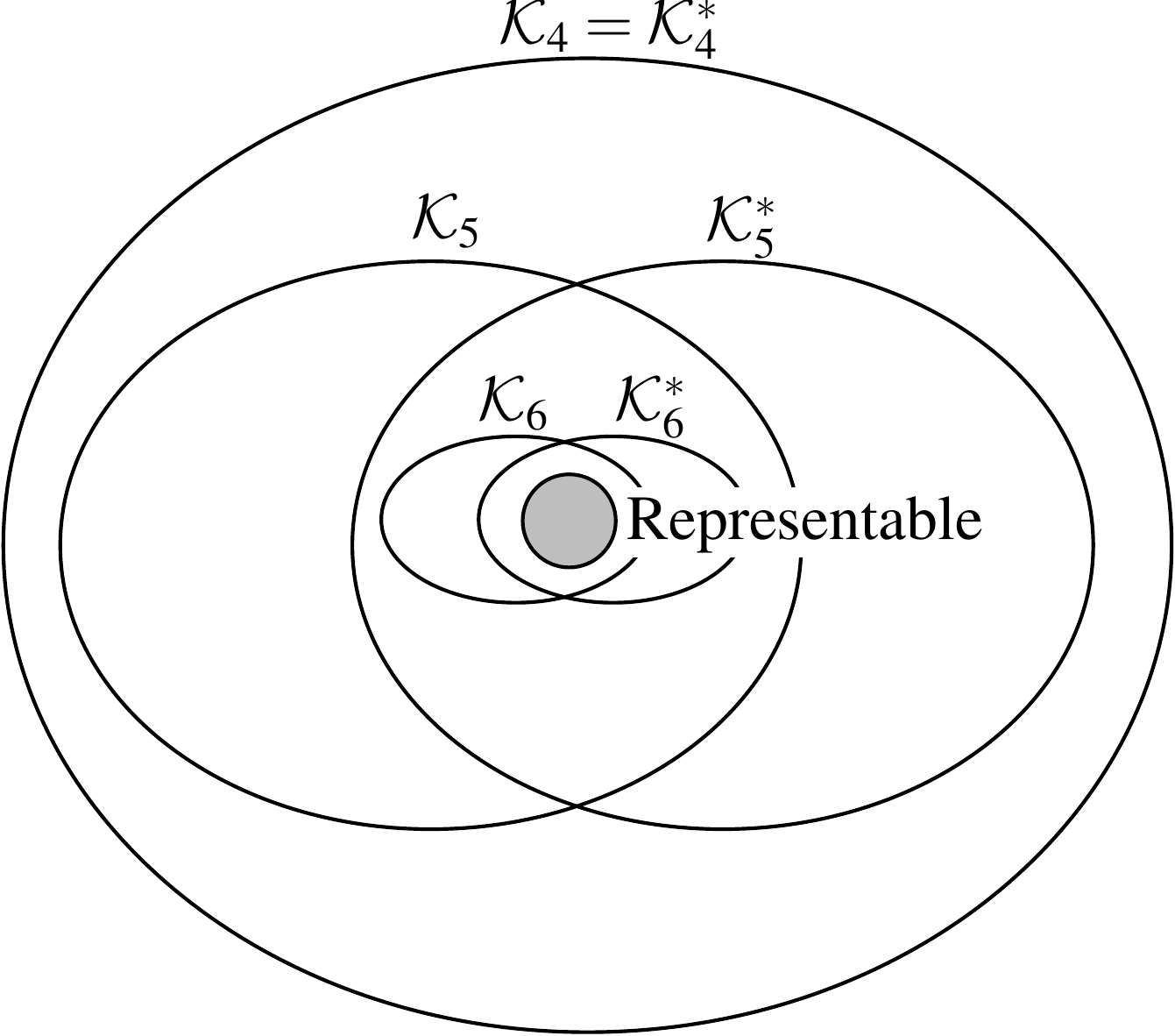}
\caption{Kinser classes (4)}
\end{figure}

\pagebreak

Now we will consider two classes of matroids which we conjecture satisfy every Kinser inequality.

\begin{dfn}
\label{dgeom}
Let $G$ be an abelian group. Take a complete graph and add a loop to very vertex, replace every edge with a parallel class of $|G|$ edges. Call this graph $H$. Orient every edge which is not a loop so that parallel edges have the same direction. Bijectively label each parallel class with the elements of $G$, and label loops with non-identities. 
Let $C$ be a cycle of $H$. Consider the product of group labels in $C$, taken in cyclic order, where if an edge is oriented against the cyclic order we take the inverse of its label instead. If the result is the identity, call $C$ \emph{positive}. Otherwise, call $C$ \emph{negative}.
Take a Dowling geometry $H$. There exists a matroid which has $E(H)$ as its ground set, and set of circuits equal to the positive cycles of $H$ and minimal connected subgraphs that contain two negative cycles. Call this matroid a \emph{Dowling geometry}.
\end{dfn}

Take a field $\mathbb{F}$. Recall that $\mathbb{F}^\times$ is the multiplicative group consisting of the non-zero elements of $\mathbb{F}$.

\begin{lemma}
\cite[Theorem 6.10.10]{Oxley} Take a Dowling geometry of rank $r$ over a finite group $G$. This matroid is representable over a field $\mathbb{F}$ if and only if $G$ is isomorphic to a subgroup of $\mathbb{F}^\times$.
\end{lemma}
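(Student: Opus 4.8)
The plan is to prove both directions by a direct coordinatisation, exploiting the fact that the ``identity part'' of the Dowling geometry is a rigid graphic matroid. Throughout one should take $r\ge 3$: the statement fails for $r=2$, where the Dowling geometry is a rank-$2$ matroid on $|G|+2$ points and only the cardinality bound $|G|\le|\mathbb{F}|-1$ is forced.

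For the \emph{if} direction, suppose $\iota\colon G\hookrightarrow\mathbb{F}^\times$ is a group embedding. Fix a basis $e_1,\dots,e_r$ of $\mathbb{F}^r$. Represent the joint and the loop at vertex $i$ by $e_i$, and represent the edge of the parallel class $\{i,j\}$ oriented from $i$ to $j$ and carrying label $g$ by $e_i-\iota(g)e_j$. I would then verify that this assignment realises exactly the prescribed circuits: along any cycle $v_0v_1\cdots v_kv_0$ of the biased graph the corresponding vectors telescope, and the relation that closes the cycle can be solved with nonzero coefficients precisely when the cyclic product of the labels (inverting those read against their orientation) is the identity, i.e.\ when the cycle is positive; and a minimal connected subgraph containing two negative cycles has exactly one more edge than it has vertices while its vectors span a space of dimension equal to the number of vertices, so it forms a single circuit. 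This half is routine: it needs only the telescoping computation along paths and the standard rank formula for frame matroids of biased graphs.

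For the \emph{only if} direction, fix an $\mathbb{F}$-representation of the Dowling geometry. The joints together with the identity-labelled edges of the parallel classes form a copy of the cycle matroid $M(K_{r+1})$, with the joints playing the role of the edges at an apex vertex; this matroid is uniquely $\mathbb{F}$-representable up to projective equivalence, so after a change of coordinates I may assume the joint at $i$ is $e_i$ and the identity edge of $\{i,j\}$ is $e_i-e_j$. For a general label $g$, the subgraph consisting of the loops at $i$ and $j$ together with the edge $\{i,j\}$ is a circuit, so $e^{g}_{ij}\in\langle e_i,e_j\rangle$, while $\{p_i,e^{g}_{ij}\}$ and $\{p_j,e^{g}_{ij}\}$ are independent, so both coordinates of $e^{g}_{ij}$ are nonzero; normalising the $e_i$-coordinate to $1$ writes $e^{g}_{ij}=e_i-\phi(g)e_j$ for some $\phi(g)\in\mathbb{F}^\times$ with $\phi(1)=1$. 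A short argument using triangles, including ones carrying identity labels, shows $\phi$ does not depend on the pair $\{i,j\}$ (here $r\ge 3$ is used). Now the triangle on vertices $1,2,3$ with labels $g$, $h$, $gh$ is a positive cycle, hence a rank-$2$ circuit, so the vectors $e_1-\phi(g)e_2$, $e_2-\phi(h)e_3$, $e_1-\phi(gh)e_3$ are collinear; comparing coordinates in this relation yields $\phi(gh)=\phi(g)\phi(h)$, so $\phi\colon G\to\mathbb{F}^\times$ is a homomorphism. Finally $\phi$ is injective: if $\phi(g)=1$ then $e^{g}_{ij}=e^{1}_{ij}$ as vectors, but the two edges with labels $g$ and $1$ in the same parallel class form a digon whose label product is $g$, so for $g\ne 1$ this digon is a negative cycle, not a circuit, and the two points are non-parallel --- a contradiction. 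Hence $G\cong\phi(G)\le\mathbb{F}^\times$.

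The main obstacle I expect is the bookkeeping: the orientation and inverse conventions in the definition of a positive cycle must be matched precisely to the choice $e_i-\iota(g)e_j$ and to the normalisation $e^{g}_{ij}=e_i-\phi(g)e_j$, and one must check carefully that the locally-defined factors $\phi_{ij}$ really do glue into one global homomorphism. Everything else --- the rank formulas for frame matroids, the unique representability of $M(K_{r+1})$, and the telescoping along paths --- is standard frame-matroid machinery.
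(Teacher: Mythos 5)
The paper does not reprove this lemma; it simply quotes it from Oxley (Theorem 6.10.10), so there is no in-paper argument to compare against. Your reconstruction is the standard Dowling-style proof and is essentially correct. For the forward direction you give the usual frame-matrix coordinatisation $e_i-\iota(g)e_j$ and check that positive cycles and theta/handcuff subgraphs realise the prescribed circuits; for the converse you use the unique $\mathbb{F}$-representability of the graphic restriction $M(K_{r+1})$ (joints plus identity edges) to normalise to $e_i$ and $e_i-e_j$, recover a well-defined scalar $\phi(g)$ via the handcuff circuit on a single parallel class, propagate consistency across pairs using identity-labelled triangles, extract the homomorphism property from the $(g,h,gh)$-triangle, and use the negativity of the $(g,1)$-digon to force injectivity. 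All of that is the right argument, and you are correct that each step is routine frame-matroid bookkeeping once the orientation conventions are nailed down.

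Two comments worth keeping in mind. First, you are right to flag $r\ge 3$: the paper's statement omits this hypothesis, but for $r=2$ the Dowling geometry is just a long line $U_{2,|G|+2}$, whose representability is governed by $|\mathbb{F}|\ge|G|+1$ rather than by a subgroup embedding, so the hypothesis is genuinely needed and Oxley's Theorem 6.10.10 includes it. Second, the paper's Definition 8.2 restricts to abelian $G$ and labels the vertex loops by arbitrary non-identities; your coordinatisation is indifferent to which non-identity decorates a loop (any non-identity makes the loop an unbalanced cycle, i.e.\ a joint), and the abelian hypothesis is harmless since a subgroup of $\mathbb{F}^\times$ is automatically abelian, so your proof still applies verbatim to the paper's slightly narrower definition.
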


In this case, the Dowling matroid satisfies every Kinser inequality. We also have that if $G$ is a finite subgroup of the multiplicative group of a field, then $G$ is cyclic by \cite{group}. The following conjecture is thus open when $G$ is both finite and non-cyclic.

\begin{conj}
A Dowling geometry satisfies every Kinser inequality.
\end{conj}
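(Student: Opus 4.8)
\emph{Towards a proof.} The plan is to first reduce to the genuinely non-representable case. As noted before the conjecture, whenever $G$ embeds in $\mathbb{F}^{\times}$ for some field $\mathbb{F}$, the Dowling geometry is $\mathbb{F}$-representable (freely adjoin coloops if one wishes to adjust the rank), hence lies in every $\mathcal{K}_{n}$ by Lemma \ref{rep}; for finite $G$ this covers precisely the cyclic groups, and since a Dowling geometry in the sense of Definition \ref{dgeom} has a finite ground set one may take $G$ finite. So the substance of the conjecture is the case of a finite non-cyclic abelian group $G$, the smallest example being $\mathbb{Z}/2 \times \mathbb{Z}/2$. Fix such a $G$ and a rank $r$; using that $\mathcal{K}_{n}$ is closed under direct sums (Lemma \ref{ds closed}) one may further assume the geometry is connected.

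The key structural input is that a Dowling geometry is the frame (bias) matroid of the $G$-labelled complete graph with loops of Definition \ref{dgeom}, so its rank function has the closed form $r(S) = |V(S)| - b(S) = r_{\Gamma}(S) + u(S)$, where $V(S)$ is the vertex set met by $S$, $r_{\Gamma}$ is the rank function of the ordinary cycle matroid of the underlying graph, and $b(S)$, $u(S)$ count respectively the balanced and the unbalanced connected components of the subgraph spanned by $S$. Two of these ingredients interact well with Kinser's inequalities: $r_{\Gamma}$ is a regular matroid rank function and hence satisfies every Kinser inequality by Lemma \ref{rep}; and $S \mapsto |V(S)|$ is the rank function of a linear polymatroid — assign to each non-loop edge the span of the two standard basis vectors at its ends, and to a loop the span of the single basis vector at its vertex — which satisfies every Kinser inequality because the argument in the proof of Lemma \ref{rep}, being phrased entirely in terms of sums and intersections of subspaces, applies verbatim to linear polymatroids. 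Since both sides of the $n$-th Kinser inequality are additive in the rank function evaluated at a fixed list of unions, the decompositions $r = r_{\Gamma} + u$ and $r = |V(\cdot)| - b$ reduce the inequality for the Dowling rank function to a purely combinatorial one: it suffices to prove that the unbalanced-component count $S \mapsto u(S)$ satisfies the $n$-th Kinser inequality, or alternatively that the balanced-component count $S \mapsto b(S)$ satisfies its reverse.

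To attack that combinatorial inequality I would first normalise, using Lemmas \ref{indp} and \ref{flats} to assume any violating family consists of flats of the Dowling geometry. Flats of a Dowling geometry have an explicit description in terms of group-labelled partial partitions of the joint set, and under that description each of the unions occurring in Kinser's inequality — $X_{1} \cup X_{2}$, $X_{1} \cup X_{3} \cup X_{n}$, the consecutive pairs $X_{i-1} \cup X_{i}$, and the triangles $X_{2} \cup X_{i-1} \cup X_{i}$ — has a controllable effect on which components of the combined subgraph are balanced, in the spirit of the case analysis for $\kin{5}^{-}$ carried out in the chapter on $\mathcal{K}_{5} \neq \mathcal{K}_{5}^{*}$. \textbf{The main obstacle is precisely this combinatorial inequality for $u(\cdot)$.} The function $u$ is neither monotone nor submodular, so the inequality cannot be extracted from generic polymatroid axioms; it must use the rigidity of unbalanced components in a frame matroid — once two unbalanced components are joined by an edge they remain a single unbalanced component, whereas a family of balanced components coalesces into an unbalanced one only when the labels along the connecting edges violate a cocycle (switching) condition — and one must show that these merging phenomena line up with the particular telescoping combination (the chain $X_{4}, \dots, X_{n}$ together with the anchor terms in $X_{1}$ and $X_{2}$) defining Kinser's $n$-th inequality. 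I expect this to require a careful argument, most likely a case analysis on the balanced/unbalanced status of the components of the relevant subgraphs, and it is plausible that the full strength of the telescoping structure is genuinely needed — which is also why these inequalities are delicate to begin with. A reasonable intermediate target is to settle the case $n = 4$, i.e. the Ingleton inequality, for all Dowling geometries, where the combinatorial inequality is small enough to verify by hand, and then to bootstrap to general $n$ by induction or by adapting that argument.
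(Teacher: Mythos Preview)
The statement you were asked to prove is listed in the paper as a \emph{Conjecture}, not a theorem: the paper provides no proof and explicitly remarks that the assertion is open for finite non-cyclic $G$. There is therefore nothing on the paper's side to compare against beyond the reduction it already records (cyclic $G$ gives a representable Dowling geometry, hence lies in every $\mathcal{K}_n$ by Lemma~\ref{rep}).

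Your write-up is consistent with this: it is not a proof but a strategy, and you flag the main step as unresolved. The parts that are genuinely settled are correct. The reduction to finite non-cyclic abelian $G$ is exactly the paper's observation. The linearity observation --- that each Kinser inequality is an $\mathbb{R}$-linear functional in the rank values, so if $r=r_1+r_2$ and each $r_i$ satisfies inequality $n$ then so does $r$ --- is sound, and both $r_\Gamma$ (graphic, hence representable) and $S\mapsto |V(S)|$ (a subspace-arrangement polymatroid, to which the argument of Lemma~\ref{rep} applies verbatim) do satisfy every Kinser inequality. Your conclusion that the whole conjecture is equivalent to a purely combinatorial inequality for the unbalanced-component count $u(\cdot)$, or the reverse inequality for the balanced-component count $b(\cdot)$, is a clean reformulation that goes beyond anything in the paper.

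The gap is exactly the one you name: neither $u$ nor $-b$ is monotone or submodular, so there is no off-the-shelf polymatroid argument, and you have not supplied a mechanism that forces the telescoping pattern of the $n$-th Kinser inequality to interact correctly with the merging/unbalancing of components in a biased graph. Two cautions on the sketch as written. First, Lemmas~\ref{indp} and~\ref{flats} let you normalise a putative bad family with respect to the Dowling rank function $r$, but that normalisation need not simplify the behaviour of $u$ on the same family, so ``assume the $X_i$ are flats of the Dowling geometry'' may buy less than you hope when you then analyse $u$. Second, the suggested bootstrap from $n=4$ to general $n$ is optimistic: the paper's Lemma~\ref{supseteq} gives $\mathcal{K}_{n+1}\subseteq\mathcal{K}_n$, not the reverse, so settling Ingleton does not formally imply the higher inequalities, and any induction would have to work directly with $u$ rather than with membership in $\mathcal{K}_n$. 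As a strategy this is a reasonable line of attack on an open problem, but it is not yet a proof, and the paper does not claim one either.
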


Now we will consider matroids which are representable over skew partial fields. All of the following definitions and results can be found in \cite{skew}.

\begin{dfn}
\label{spf}
A \emph{skew partial field} is a pair $(R,G)$ where $R$ is a ring, and $G$ is a subgroup of the group of units of $R$, such that $-1\in G$.
\end{dfn}

\begin{dfn}
\label{chain}
Let $R$ be a ring, and let $E$ be a finite set. An \emph{R-chain group} on $E$ is a subset $C\subseteq R^E$ such that, for all $f,g\in C$ and $r\in R$,
\begin{itemize}
\item[i.] $0\in C$,
\item[ii.] $f+g\in C$,
\item[iii.] $rf\in C$
\end{itemize}
\end{dfn}

The elements of $C$ are called \emph{chains}, and the \emph{support} of a chain $c=\{c_1,\ldots,c_e\}\in C$ is $$||c||=\{i\in E \ | \ c_i\neq 0\}$$

\begin{dfn}
A chain $c\in C$ is \emph{elementary} if $c\neq 0$ and there is no $c'\in C-\{0\}$ with $||c'||\subset ||c||$.
\end{dfn}

\begin{dfn}
Let $G$ be a subgroup of the group of units of $R$. A chain $c\in C$ is \emph{G-primitive} if $c\in (G\cup\{0\})^E$.
\end{dfn}

\begin{dfn}
Let $\mathbb{P}=(R,G)$ be a skew partial field, and $E$ a finite set. A \emph{$\mathbb{P}$-chain group} on $E$ is an $R$-chain group $C$ on $E$ such that every elementary chain $c\in C$ can be written as $c=rc'$ for some $G$-primitive chain $c'\in C$ and some $r\in R$.
\end{dfn}

\begin{lemma}
Let $\mathbb{P}=(R,G)$ be a skew partial field, and let $C$ be a $\mathbb{P}$-chain group on $E$. Then $\mathcal{C}^*=\{||c|| \ | \ c\in C, \ c \ \text{is elementary}\}$ is the set of cocircuits of a matroid on $E$.
\end{lemma}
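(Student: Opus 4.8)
The plan is to show that $\mathcal{C}^*$ satisfies the three circuit axioms C1--C3 of Lemma \ref{circuitaxioms}; this produces a matroid $N=(E,\mathcal{C}^*)$, and then $N^*$ is a matroid on $E$ whose cocircuits are precisely $\mathcal{C}^*$. The first two axioms are essentially formal. An elementary chain is nonzero, so its support is nonempty, giving C1. For C2, if $c,d\in C$ are elementary with $||c||\subseteq||d||$, then $||c||\subsetneq||d||$ would exhibit a nonzero chain with support strictly contained in $||d||$, contradicting the elementarity of $d$; hence $||c||=||d||$.

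The real content is the circuit-elimination axiom C3. Given distinct supports $||c||,||d||\in\mathcal{C}^*$ and an element $e\in||c||\cap||d||$, I would first use the defining property of a $\mathbb{P}$-chain group to arrange that $c$ and $d$ are $G$-primitive: writing $c=rc'$ with $c'$ $G$-primitive one has $||c||\subseteq||c'||$, and support-minimality of $c$ forces $||c'||=||c||$, so $c'$ may replace $c$, and likewise $d$. Then the $e$-coordinates $c_e,d_e$ lie in $G$ and in particular are units of $R$, so one can form the chain $c''=c-(c_ed_e^{-1})d\in C$, which has $e$-coordinate $c_e-c_ed_e^{-1}d_e=0$, support contained in $(||c||\cup||d||)-e$, and which is nonzero because $c''=0$ would give $c=(c_ed_e^{-1})d$ with $c_ed_e^{-1}$ a unit, hence $||c||=||d||$, a contradiction. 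Finally, choosing a nonzero chain $f$ of inclusion-minimal support among the (finitely many, as $E$ is finite) nonzero chains supported inside $||c''||$, one checks that $f$ is elementary, so $||f||\in\mathcal{C}^*$ and $||f||\subseteq(||c||\cup||d||)-e$, as required.

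The only genuinely delicate point is the pivot in C3: since $R$ need not be commutative, one must keep the correcting scalar $c_ed_e^{-1}$ on the left, consistent with axiom (iii) of a chain group, and lean on $G$-primitivity to know $d_e$ is invertible. I expect this bookkeeping, rather than any conceptual difficulty, to be the main obstacle; the remainder is a routine transcription of the classical chain-group argument into the skew setting. Once C1--C3 are established, Lemma \ref{circuitaxioms} together with the definition of the dual matroid completes the proof (the empty case $C=\{0\}$ being handled vacuously).
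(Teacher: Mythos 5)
The paper does not actually prove this lemma --- it is one of the results deferred to the reference \cite{skew} (``All of the following definitions and results can be found in \cite{skew}''), so there is no in-paper argument to compare against. Your reconstruction is correct and is the standard Tutte chain-group argument adapted to the skew setting, which is precisely what the cited source does. The verification of C1 and C2 is as you say. For C3, the reduction to $G$-primitive representatives is legitimate: if $c=rc'$ with $c'$ $G$-primitive and $c$ elementary, then $r\neq 0$, and since each nonzero entry of $c'$ is a unit, $rc'_i=0$ would force $r=0$; hence $||c'||=||c||$, and $c'$ is elementary with the same support. The pivot $c''=c-(c_ed_e^{-1})d$ stays in $C$ by axioms (ii) and (iii) (left scalars, with $-1\in G$), kills the $e$-coordinate by associativity alone, and is nonzero since $c_ed_e^{-1}$ is a unit and $||c||\neq||d||$. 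Taking a nonzero chain of inclusion-minimal support among those supported in $||c''||$ (finitely many possible supports, since $E$ is finite, even though the chains themselves may be infinite in number) yields an elementary chain with support inside $(||c||\cup||d||)-e$. Two small points worth tidying if you were to write this up: the paper's C3 is stated with $e\in C\cup D$ (a typo for $C\cap D$); your argument treats the intersection case, and the case $e\notin||c||\cap||d||$ is vacuous since then one of $||c||,||d||$ already lies in $(||c||\cup||d||)-e$. And ``finitely many nonzero chains'' should really be ``finitely many possible supports.'' Neither affects correctness.
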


A matroid $M$ is said to be \emph{$\mathbb{P}$-representable} if there exists a $\mathbb{P}$-chain group $C$ such that $M=M(C)$. If a matroid is representable and thus satisfies every Kinser inequality, it is representable over a skew partial field.

\begin{conj}
Take a matroid $M$ which is $\mathbb{P}$-representable. $M$ satisfies every Kinser inequality.
\end{conj}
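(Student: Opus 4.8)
The plan is to reduce the statement to the field case, Lemma~\ref{rep}, by showing that a $\mathbb{P}$-representable matroid is a vector matroid over a division ring, and then observing that the proof of Lemma~\ref{rep} never actually uses commutativity of the ground field.

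First I would isolate exactly what the proof of Lemma~\ref{rep} needs. Inspecting that argument, every inequality in it is an instance of one of: the submodular law for subspaces $U,W$ of an ambient vector space, $\dim(U+W)+\dim(U\cap W)\le\dim(U)+\dim(W)$ (in fact the equality, i.e.\ the modular law); monotonicity of $\dim$; and the identities $\langle V_i\cup V_j\rangle=\langle V_i\rangle+\langle V_j\rangle$ and $\langle V_i\cup V_j\cup V_k\rangle=\langle V_i\rangle+\langle V_j\rangle+\langle V_k\rangle$. None of these uses commutativity: for a left vector space $V$ over a division ring $D$, every subspace has a well-defined dimension (all bases of a free submodule have the same cardinality), the modular law holds, and spans of unions decompose into sums exactly as in the commutative case, since $\langle X\rangle$ is the set of finite left-linear combinations $\sum d_x x$. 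Hence, once $M$ is realised inside a $D$-vector space with $r_M(X)=\dim_D\langle X\rangle$, the computation in the proof of Lemma~\ref{rep} transcribes verbatim, with $PG(r-1,\mathbb{K})$ replaced by the projective geometry $PG(r-1,D)$ — the subspace lattice of a rank-$r$ left $D$-vector space — and $W=\langle V_3\rangle\cap\cdots\cap\langle V_n\rangle$ the (possibly non-flat) intersection of the relevant $D$-spans.

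So the real content is the reduction: a $\mathbb{P}$-representable matroid is representable over a division ring. In the commutative case this is elementary: if $\mathbb{P}=(R,G)$ with $R$ commutative and $A$ is a $\mathbb{P}$-matrix for $M$, pick any maximal ideal $\mathfrak{m}$ of $R$; since every element of $G$ is a unit of $R$, we have $G\cap\mathfrak{m}=\varnothing$, so reduction modulo $\mathfrak{m}$ is a partial-field homomorphism $\mathbb{P}\to(R/\mathfrak{m},(R/\mathfrak{m})^\times)$ into a field, and such homomorphisms preserve representability, so $M$ is representable over the field $R/\mathfrak{m}$. In the genuinely skew case this shortcut fails — a quotient of $R$ by a maximal two-sided ideal is only simple, not a division ring — so here I would invoke the structure theory of \cite{skew}: one passes from the $\mathbb{P}$-chain group $C\subseteq R^E$ to a chain group over a skew field $\mathbb{F}$ with the same family of elementary supports, whence $M(C)$ is the vector matroid of the row space of an $\mathbb{F}$-matrix. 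Granting this, $M$ is $\mathbb{F}$-representable and the transcription of Lemma~\ref{rep} described above applies.

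The main obstacle is precisely this reduction. Beyond the commutative case it genuinely rests on the results of \cite{skew} identifying $\mathbb{P}$-representable matroids with vector matroids over skew fields — equivalently, on extending or specialising the $\mathbb{P}$-chain group to a chain group over a division ring without altering the elementary supports — and verifying that this specialisation creates no new dependencies and destroys none is the delicate point. Once it is in hand, the remainder of the proof is the purely formal re-derivation of Lemma~\ref{rep} over $D$, so I expect the write-up to be short modulo the cited input.
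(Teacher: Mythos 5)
This statement appears in the thesis as an open \emph{conjecture}: the paper offers no proof at all, so there is no argument of the paper's to compare yours against, and I can only assess your proposal on its own terms. The second half of your plan is sound. The proof of Lemma~\ref{rep} really is pure modular-lattice arithmetic: dimensions of left subspaces over a division ring are well defined, the modular law $\dim(U+W)+\dim(U\cap W)=\dim U+\dim W$ holds, spans of unions decompose into sums, and nothing in Kinser's computation invokes commutativity. So once a matroid is realised as a vector matroid over a division ring, it does satisfy every Kinser inequality.

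The gap is the first half, the reduction from $\mathbb{P}$-representability to division-ring representability, and it is not a delicate technicality to be cited away --- it is almost certainly the entire content of the conjecture, which is presumably why the author left it open. Your commutative argument is correct: since $G$ consists of units, no maximal ideal $\mathfrak{m}$ meets $G$, and reduction mod $\mathfrak{m}$ is a partial-field homomorphism into the field $R/\mathfrak{m}$ that preserves the matroid. But you then ask \cite{skew} to supply ``a chain group over a skew field $\mathbb{F}$ with the same elementary supports,'' and I do not believe that result is there. The structure theorems of Pendavingh and van Zwam tie representability over skew partial fields built from matrix rings $(\mathrm{Mat}_d(\mathbb{F}),\mathrm{GL}_d(\mathbb{F}))$ to degree-$d$ \emph{multilinear} representations over a commutative field, not to vector matroids over division rings; and for a wholly general skew partial field in the sense of Definition~\ref{spf}, no passage to a division ring (or even to a multilinear representation) is established. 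A quotient by a maximal two-sided ideal is only simple --- e.g.\ a matrix ring --- and you give no replacement. Since your write-up is explicitly ``short modulo the cited input,'' the proposal is a conditional argument resting on an unverified and probably mislocated hypothesis. A more promising route, if you want to salvage the reduction, is through multilinear representations directly: if each $X\subseteq E$ gets a subspace $V(X)$ with $\dim V(X)=d\cdot r_M(X)$ and $V(X)+V(Y)=V(X\cup Y)$, then Kinser's subspace inequality applied to the $V(X_i)$ and divided by $d$ yields the matroid inequality. But you would then owe a proof that $\mathbb{P}$-representability for an arbitrary skew partial field $(R,G)$ implies multilinear representability, and that is not in hand either.
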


\bibliographystyle{acm}
\bibliography{library} 
\end{document}